\newtheorem{thm}{Theorem}[section]
\newtheorem{cor}[thm]{Corollary}
\newtheorem{lem}[thm]{Lemma}
\newtheorem{exm}[thm]{Example}
\newtheorem{conj}[thm]{Conjecture}
\newtheorem{prop}[thm]{Proposition}
\theoremstyle{definition}
\newtheorem{defn}[thm]{Definition}
\theoremstyle{remark}
\numberwithin{equation}{section}
\begin{document}
\title[The $\mathbf{D}$-standard and $\mathbf{K}$-standard categories]{The $\mathbf{D}$-standard and $\mathbf{K}$-standard categories}
\author[Xiao-Wu Chen, Yu Ye] {Xiao-Wu Chen, Yu Ye$^*$}

\thanks{$^*$ The corresponding author}
\subjclass[2010]{18E30, 16G10, 18G35, 16E05}
\date{\today}

\thanks{E-mail:
xwchen$\symbol{64}$mail.ustc.edu.cn, yeyu$\symbol{64}$ustc.edu.cn}
\keywords{standard equivalence, triangle center, triangle functor, derived equivalence}%

\maketitle

\dedicatory{}%
\commby{}%

\begin{abstract}
We introduce the notions of a $\mathbf{D}$-standard abelian category and a $\mathbf{K}$-standard additive category. We prove that for a finite dimensional algebra $A$, its module category is $\mathbf{D}$-standard if and only if any derived autoequivalence on $A$ is standard, that is, isomorphic to the derived tensor functor by a two-sided tilting complex. We prove that if the subcategory of projective $A$-modules is $\mathbf{K}$-standard, then the module category is $\mathbf{D}$-standard. We provide new examples of $\mathbf{D}$-standard module categories.
\end{abstract}

\tableofcontents

\section{Introduction}

Let $k$ be a field, and let $A$ be a finite dimensional $k$-algebra. We denote by $A\mbox{-mod}$ the category of finite dimensional left  $A$-modules and by $\mathbf{D}^b(A\mbox{-mod})$ its bounded derived category. By a derived equivalence between two algebras $A$ and $B$, we mean a $k$-linear triangle equivalence $F\colon \mathbf{D}^b(A\mbox{-mod})\rightarrow \mathbf{D}^b(B\mbox{-mod})$. It is an open question in \cite{Ric} whether any derived equivalence is \emph{standard}, that is, isomorphic to the derived tensor functor by a two-sided tilting complex.

We mention that there exists a triangle functor between the bounded derived categories of module categories, which is not isomorphic to the derived tensor functor by any complex of bimodules; see \cite{RVan}.

The above open question is answered affirmatively  in the following cases: (1) $A$ is hereditary \cite{MY}; (2) $A$ is (anti-)Fano \cite{Mina}; (3) $A$ is triangular \cite{Chen}. We mention that their proofs rely on the work \cite{Or} and \cite{AR}.

In the present paper, we take a different approach. Recall from \cite{Ric} that for a given derived equivalence $F$, there is a standard equivalence $F'$ such that they act  identically on objects. This motivates the following notion: a triangle autoequivalence $G$ on $\mathbf{D}^b(A\mbox{-mod})$ is called a \emph{pseudo-identity},  provided that $G$ acts on objects by the identity and that the restriction of $G$ to stalk complexes  equals the identity functor. Roughly speaking, a pseudo-identity is very close to the genuine identity functor on $\mathbf{D}^b(A\mbox{-mod})$. Then any derived equivalence $F\colon\mathbf{D}^b(A\mbox{-mod})\rightarrow \mathbf{D}^b(B\mbox{-mod})$ allows a factorization $F\simeq F'G$ with $G$ a pseudo-identity on $\mathbf{D}^b(A\mbox{-mod})$ and $F'$ a standard equivalence; moreover, such a factorization is unique; see Proposition \ref{prop:factor}.

We say that the module category $A\mbox{-mod}$ is \emph{$\mathbf{D}$-standard} if any pseudo-identity on $\mathbf{D}^b(A\mbox{-mod})$ is isomorphic to the identity functor as triangle functors. We prove that $A\mbox{-mod}$ is $\mathbf{D}$-standard if and only if any derived equivalence from $\mathbf{D}^b(A\mbox{-mod})$ is standard; see Theorem \ref{thm:2}. Therefore, the open question is equivalent to the following conjecture: any module category $A\mbox{-mod}$ is $\mathbf{D}$-standard.

This notion of $\mathbf{D}$-standardness applies to any $k$-linear abelian category. It is well known that for the categories of coherent sheaves on smooth projective varieties,  derived equivalences of \emph{Fourier-Mukai type} are geometric analogue to standard equivalences. By \cite{Or}, any $k$-linear triangle equivalence between the bounded derived categories of coherent sheaves on smooth projective varieties is of Fourier-Mukai type. Indeed, the proof therein implies that the abelian category of coherent sheaves on any  projective variety is $\mathbf{D}$-standard; compare Proposition \ref{prop:Orlov}. In a certain sense, this fact supports the above conjecture.

Analogously to the above consideration, we introduce  the notion of a \emph{$\mathbf{K}$-standard} additive category, where a pseudo-identity on the bounded homotopy category is involved. We prove that if the category of projective $A$-modules is $\mathbf{K}$-standard, then $A\mbox{-mod}$ is $\mathbf{D}$-standard; see Theorem \ref{thm:1}. This seems to shed new light on the above conjecture.

The paper is structured as follows. In Section 2, we recall basic facts on triangle functors and centers. The notions of a pseudo-identity on the bounded homotopy category of an additive category  and on the bounded derived category of an abelian category are introduced in Section 3.  In Section 4, we introduce the notion of a (strongly) $\mathbf{K}$-standard additive category,  and observe that an Orlov category  \cite{AR} is strongly $\mathbf{K}$-standard; see Proposition \ref{prop:AR}. Analogously, we have  the notion of a (strongly) $\mathbf{D}$-standard abelian category  in Section 5, where we observe that an abelian category with an ample sequence \cite{Or} of objects is strongly $\mathbf{D}$-standard; see Proposition \ref{prop:Orlov}. We prove Theorem \ref{thm:2}, which relates the above open question to the $\mathbf{D}$-standardness.

In Section 6, we prove Theorem \ref{thm:1}, which claims that  an abelian category with enough projectives is $\mathbf{D}$-standard provided that the full subcategory of projectives is $\mathbf{K}$-standard. In the final section, we provide two examples of algebras,  whose module categories are $\mathbf{D}$-standard. In particular, the algebra of dual numbers provides a $\mathbf{D}$-standard, but not strongly $\mathbf{D}$-standard, module category; see Theorem \ref{thm:exam1}.

Throughout the paper, we assume that the categories in consideration are skeletally small. We assume the axiom of global choice, that is, the axiom of choice for classes.

\section{Triangle functors and centers}

In this section, we recall basic facts on triangle functors and the centers of triangulated categories.

\subsection{Triangle functors and extending natural transformations}

Let $\mathcal{T}$ and $\mathcal{T}'$ be triangulated categories, whose translation functors are denoted by $\Sigma$ and $\Sigma'$, respectively. Recall that a triangle functor $(F, \omega)$ consists of an additive functor $F\colon \mathcal{T}\rightarrow \mathcal{T}'$ and a natural isomorphism $\omega\colon F\Sigma\rightarrow \Sigma' F$ such that any exact triangle $X\rightarrow Y\rightarrow Z\stackrel{h}\rightarrow \Sigma(X)$ in $\mathcal{T}$ is sent to an exact triangle $F(X)\rightarrow F(Y) \rightarrow F(Z) \xrightarrow{\omega_X\circ F(h)} \Sigma'F(X)$ in $\mathcal{T}'$.

The natural isomorphism $\omega$ is called the \emph{connecting isomorphism} for $F$. When $\omega$ is understood or not important in the context, we suppress it and write $F$ for the triangle functor $(F, \omega)$. The connecting isomorphism $\omega$ is \emph{trivial} if $F\Sigma=\Sigma'F$ and $\omega={\rm Id}_{F\Sigma}$ is the identity transformation. For example, the identity functor ${\rm Id}_\mathcal{T}$, as a triangle functor, is understood as the pair $({\rm Id}_\mathcal{T}, {\rm Id}_\Sigma)$, which has the trivial connecting isomorphism.

For a triangle functor $(F, \omega)$, we define natural isomorphisms $\omega^n\colon F\Sigma^n\rightarrow {\Sigma'}^nF$ for all $n\geq 1$ as follows: $\omega^1=\omega$ and $\omega^{n+1}={\Sigma'}^n\omega\circ \omega^n\Sigma$ for $n\geq 1$. We observe $\omega^{n+1}=\Sigma'\omega^n\circ \omega \Sigma^n$. If both $\Sigma$ and $\Sigma'$ are automorphisms of categories, we define natural isomorphisms $\omega^{-n}\colon F\Sigma^{-n}\rightarrow {\Sigma'}^{-n}F$ as follows: $\omega^{-1}=({\Sigma'}^{-1}\omega^{1}\Sigma^{-1})^{-1}$ and $\omega^{-n-1}={\Sigma'}^{-n}\omega^{-1} \circ \omega^{-n}\Sigma^{-1}$ for $n\geq 1$. By convention, $\omega^0={\rm Id}_F$.

For two triangle functors $(F, \omega)$ and $(F', \omega')$ from $\mathcal{T}$ to $\mathcal{T}'$, a natural transformation $\eta\colon (F, \omega)\rightarrow (F', \omega')$ between triangle functors means a natural transformation $\eta\colon F\rightarrow F'$ satisfying $\omega'\circ \eta\Sigma=\Sigma' \eta\circ \omega$. The composition of two triangle functors $(F, \omega)\colon \mathcal{T}\rightarrow \mathcal{T}'$ and $(G, \gamma)\colon \mathcal{T}'\rightarrow \mathcal{T}''$ is given by $(GF, \gamma F\circ G\omega)\colon \mathcal{T}\rightarrow \mathcal{T}''$, which is often denoted just by $GF$.

The following fact is well known.

\begin{lem}\label{lem:iso}
Let $\eta\colon (F, \omega)\rightarrow (F', \omega')$ be the natural transformation as above. Then the full subcategory ${\rm Iso}(\eta)=\{X\in \mathcal{T}\; |\; \eta_X \mbox{ is an isomorphism}\}$ of $\mathcal{T}$ is a  triangulated subcategory. \hfill $\square$
\end{lem}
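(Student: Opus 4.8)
The plan is to verify the three defining properties of a triangulated subcategory for $\mathrm{Iso}(\eta)$: it contains the zero object, it is closed under the translation $\Sigma$ and its inverse, and it satisfies the two-out-of-three property for exact triangles. The first point is immediate, since $\eta_0$ is always an isomorphism (both $F$ and $F'$ are additive, hence $F(0)=0=F'(0)$). For closure under $\Sigma$, suppose $\eta_X$ is an isomorphism. Then from the compatibility $\omega'\circ \eta\Sigma=\Sigma'\eta\circ \omega$ we read off $\eta_{\Sigma X}=(\omega'_X)^{-1}\circ \Sigma'(\eta_X)\circ \omega_X$, a composite of isomorphisms, so $\eta_{\Sigma X}$ is an isomorphism; the same identity solved for $\eta_X$ in terms of $\eta_{\Sigma X}$ shows closure under $\Sigma^{-1}$ (when the translations are automorphisms; in general one works with the shifted triangle, see below).

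The essential point is the two-out-of-three property. Let $X\to Y\to Z\xrightarrow{h}\Sigma X$ be an exact triangle and suppose two of $\eta_X,\eta_Y,\eta_Z$ are isomorphisms; we must show the third is. Applying $F$ and $F'$ produces exact triangles $FX\to FY\to FZ\xrightarrow{\omega_X\circ F(h)}\Sigma'FX$ and $F'X\to F'Y\to F'Z\xrightarrow{\omega'_X\circ F'(h)}\Sigma'F'X$, and the naturality of $\eta$ together with the triangle-functor compatibility for $\eta$ shows that $(\eta_X,\eta_Y,\eta_Z)$ is a morphism of triangles (the only square needing the compatibility condition is the one involving the connecting maps and $\Sigma' \eta_X$). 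Now one invokes the standard fact that in any triangulated category, a morphism of exact triangles two of whose components are isomorphisms has its third component an isomorphism — this follows from the five lemma applied to the long exact sequences obtained by applying $\mathrm{Hom}(W,-)$ for arbitrary $W$, together with the Yoneda lemma. Hence the third $\eta$ is an isomorphism, as desired.

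The main obstacle, such as it is, is purely bookkeeping: one must be careful that the triangle-functor compatibility $\omega'\circ\eta\Sigma=\Sigma'\eta\circ\omega$ is exactly what makes the rightmost square of the would-be morphism of triangles commute, since the connecting morphisms of the image triangles are twisted by $\omega$ and $\omega'$. Once that square is checked, everything else is the routine five-lemma argument, which is why the statement is recorded as well known and the proof is omitted in the excerpt. For closure under $\Sigma^{-1}$ in the non-automorphism case, one uses that $X\to Y\to Z\to\Sigma X$ being exact forces $\Sigma^{-1}Z\to X\to Y\to Z$ to be considered via the rotated triangle $Y\to Z\to \Sigma X\to \Sigma Y$ instead, reducing $\Sigma^{-1}$-closure to $\Sigma$-closure; alternatively, one simply notes that $\mathrm{Iso}(\eta)$ being a full additive subcategory closed under $\Sigma$ and satisfying two-out-of-three is the definition of triangulated subcategory used here.
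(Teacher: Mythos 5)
Your proof is correct and is exactly the standard argument that the paper takes as known (the lemma is stated with its proof omitted): the compatibility $\omega'\circ\eta\Sigma=\Sigma'\eta\circ\omega$ gives closure under $\Sigma^{\pm1}$ and makes $(\eta_X,\eta_Y,\eta_Z)$ a morphism of the image triangles, after which the five lemma finishes the two-out-of-three step. Nothing to add.
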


We say that  a full subcategory $\mathcal{S}$ of $\mathcal{T}$ is \emph{generating} provided that the smallest triangulated subcategory containing $\mathcal{S}$ is $\mathcal{T}$ itself. The following well-known result is known as Beilinson's Lemma; see \cite[II.3.4]{Hap88}.

\begin{lem}\label{lem:Bei}
Let $F\colon \mathcal{T}\rightarrow \mathcal{T}'$ be a triangle functor. Assume that $\mathcal{S}\subseteq \mathcal{T}$ is a generating subcategory. Then $F$ is fully faithful if and only if $F$ induces isomorphisms
$${\rm Hom}_\mathcal{T}(X, \Sigma^n(Y))\longrightarrow {\rm Hom}_{\mathcal{T}'}(FX, F\Sigma^n(Y))$$
for all $X, Y\in \mathcal{S}$ and $n\in \mathbb{Z}$. In this case, $F$ is dense if and only if the essential image ${\rm Im}\;F$ contains a generating subcategory of $\mathcal{T}'$. \hfill $\square$
\end{lem}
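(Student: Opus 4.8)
The plan is to run a two-step dévissage, reducing full faithfulness on all of $\mathcal{T}$ to the assumed bijectivity on the generating set $\mathcal{S}$. The ``only if'' direction is immediate: if $F$ is fully faithful, the displayed maps are bijective already by definition, after harmlessly identifying $F\Sigma^n(Y)$ with ${\Sigma'}^n F(Y)$ via the natural isomorphism $\omega^n$. For the converse, fix $Y\in\mathcal{S}$ and let $\mathcal{T}_Y$ be the full subcategory of those $X\in\mathcal{T}$ for which $F$ induces isomorphisms ${\rm Hom}_\mathcal{T}(X,\Sigma^n(Y))\to{\rm Hom}_{\mathcal{T}'}(FX,F\Sigma^n(Y))$ for every $n\in\mathbb{Z}$. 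It is closed under isomorphisms and, since the defining condition ranges over all $n$, under $\Sigma^{\pm 1}$. Given an exact triangle $X\to X'\to X''\to\Sigma(X)$ with $X,X''\in\mathcal{T}_Y$, one applies the cohomological functor ${\rm Hom}_\mathcal{T}(-,\Sigma^n(Y))$ to it and the cohomological functor ${\rm Hom}_{\mathcal{T}'}(-,F\Sigma^n(Y))$ to its image under $F$ (which is an exact triangle, $F$ being a triangle functor); the maps induced by $F$ give a morphism between the two long exact sequences, and the five lemma — applied around the term involving $X'$, whose four neighbours involve only $X$ and $X''$ with various shifts — forces ${\rm Hom}_\mathcal{T}(X',\Sigma^n(Y))\to{\rm Hom}_{\mathcal{T}'}(FX',F\Sigma^n(Y))$ to be bijective. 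Hence $X'\in\mathcal{T}_Y$, so $\mathcal{T}_Y$ is a triangulated subcategory; as it contains the generating subcategory $\mathcal{S}$, we get $\mathcal{T}_Y=\mathcal{T}$.

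Next, fix $X\in\mathcal{T}$ and let $\mathcal{T}^X$ be the full subcategory of those $Y\in\mathcal{T}$ for which $F$ induces isomorphisms ${\rm Hom}_\mathcal{T}(X,\Sigma^n(Y))\to{\rm Hom}_{\mathcal{T}'}(FX,F\Sigma^n(Y))$ for all $n$. By the same five-lemma argument, now using the covariant cohomological functors ${\rm Hom}_\mathcal{T}(X,-)$ and ${\rm Hom}_{\mathcal{T}'}(FX,-)$, the subcategory $\mathcal{T}^X$ is triangulated; by the first step it contains $\mathcal{S}$, hence $\mathcal{T}^X=\mathcal{T}$. Specializing to $n=0$ for all $X$ shows that $F$ is fully faithful.

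For the last assertion, assume $F$ is fully faithful. Then ${\rm Im}\,F$ is a triangulated subcategory of $\mathcal{T}'$: it is closed under ${\Sigma'}^{\pm 1}$ through $\omega^{\pm 1}$, and given an exact triangle $FX\xrightarrow{g}FX'\to Z\to{\Sigma'}FX$ in $\mathcal{T}'$, fullness writes $g=F(f)$ for some $f\colon X\to X'$; completing $f$ to a triangle $X\to X'\to X''\to\Sigma(X)$ in $\mathcal{T}$ and applying $F$ yields an exact triangle isomorphic to the given one, whence $Z\cong FX''\in{\rm Im}\,F$. Consequently, if ${\rm Im}\,F$ contains a generating subcategory of $\mathcal{T}'$, then ${\rm Im}\,F=\mathcal{T}'$, i.e. $F$ is dense; the converse is trivial, since for a dense $F$ the essential image is all of $\mathcal{T}'$.

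The one point that deserves care — and the step I expect to be the main obstacle in a fully detailed write-up — is checking that $F$ really does induce a morphism of long exact sequences in the dévissage, that is, that the square built from the connecting morphism $X''\to\Sigma(X)$ and its $F$-image commutes after applying the Hom-functors. This is exactly the place where the triangle-functor axiom (compatibility of $\omega$ with exact triangles, as recalled at the beginning of this section) enters; once it is in place, everything else is the five lemma and the defining property of a generating subcategory.
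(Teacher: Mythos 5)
Your argument is correct and is precisely the standard d\'evissage proof of Beilinson's Lemma that the paper leaves to the cited reference \cite[II.3.4]{Hap88}: a two-step five-lemma induction showing the full subcategories $\mathcal{T}_Y$ and $\mathcal{T}^X$ are triangulated and contain $\mathcal{S}$, plus the observation that the essential image of a fully faithful triangle functor is a triangulated subcategory. The point you flag at the end (commutativity of the square involving the connecting morphism, via the compatibility of $\omega$ with exact triangles) is indeed the only place requiring care, and your treatment of it is adequate.
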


Let $F\colon \mathcal{C}\rightarrow \mathcal{D}$ be a functor. For each object $C$ in $\mathcal{C}$, we choose an object $F'(C)$ in $\mathcal{D}$ and an isomorphism $\delta_C\colon F(C)\rightarrow F'(C)$. Here, we are using the axiom of choice for the class of objects in $\mathcal{C}$. We call these chosen isomorphisms $\delta_C$'s the \emph{adjusting isomorphisms}.

Indeed, the choice makes $F'$ into a functor such that $\delta$ is a natural  isomorphism between $F$ and $F'$. The action of $F'$ on a morphism $f\colon C\rightarrow C'$ is given by $F'(f)=\delta_{C'} \circ F(f)\circ \delta_{C}^{-1}$. It follows that $F'$ preserves the identity morphism and the composition of morphisms, that is, it is indeed a functor. By the very construction of $F$, we observe the naturality of  $\delta$. In a certain sense, the new functor $F'\colon \mathcal{C}\rightarrow \mathcal{D}$ is \emph{adjusted} from the given functor $F$.

By the following well-known lemma, we might also adjust triangle functors.

\begin{lem}\label{lem:iso-tri}
Let $(F, \omega)\colon \mathcal{T}\rightarrow \mathcal{T}'$ be a triangle functor. Assume that $F'\colon \mathcal{T}\rightarrow \mathcal{T}'$ is another functor with a natural isomorphism  $\delta\colon F\rightarrow F'$. Then there is a unique isomorphism $\omega'\colon F'\Sigma\rightarrow \Sigma' F'$ such that $(F', \omega')$ is a triangle functor and that $\delta$ is an isomorphism between triangle functors.
\end{lem}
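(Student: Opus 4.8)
The plan is to observe that the requirement ``$\delta$ is an isomorphism between triangle functors'' forces a unique value of $\omega'$, and then to check that this forced value indeed makes $(F', \omega')$ a triangle functor. By the definition recalled above, $\delta\colon (F,\omega)\to (F',\omega')$ being a morphism of triangle functors means precisely that $\omega'\circ \delta\Sigma=\Sigma'\delta\circ \omega$. Since $\delta$ is a natural isomorphism, so is $\delta\Sigma\colon F\Sigma\to F'\Sigma$, and this equation can be solved for $\omega'$: one is forced to set
\[
\omega'=\Sigma'\delta\circ \omega\circ (\delta\Sigma)^{-1}.
\]
This settles the uniqueness clause. The resulting $\omega'$ is manifestly a natural isomorphism $F'\Sigma\to \Sigma' F'$, being a composite of three natural isomorphisms, and by its very construction the compatibility equation $\omega'\circ \delta\Sigma=\Sigma'\delta\circ \omega$ holds, so $\delta$ will be a morphism of triangle functors as soon as we know $(F',\omega')$ is one.

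It then remains to verify that $(F',\omega')$ sends distinguished triangles to distinguished triangles with the prescribed connecting morphism. So I would fix a distinguished triangle $X\to Y\to Z\xrightarrow{h}\Sigma(X)$ in $\mathcal{T}$; since $(F,\omega)$ is a triangle functor, the triangle $F(X)\to F(Y)\to F(Z)\xrightarrow{\omega_X\circ F(h)}\Sigma'F(X)$ is distinguished in $\mathcal{T}'$. I would compare it with the candidate triangle $F'(X)\to F'(Y)\to F'(Z)\xrightarrow{\omega'_X\circ F'(h)}\Sigma'F'(X)$ along the isomorphisms $\delta_X,\delta_Y,\delta_Z$ and $\Sigma'(\delta_X)$. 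The first two squares commute by naturality of $\delta$ applied to the maps $X\to Y$ and $Y\to Z$; for the third square, using naturality of $\delta$ at $h\colon Z\to\Sigma(X)$ (so $F'(h)\circ\delta_Z=\delta_{\Sigma X}\circ F(h)$) together with $\omega'_X=\Sigma'(\delta_X)\circ\omega_X\circ\delta_{\Sigma X}^{-1}$, one computes
\[
\omega'_X\circ F'(h)\circ \delta_Z=\Sigma'(\delta_X)\circ\omega_X\circ\delta_{\Sigma X}^{-1}\circ\delta_{\Sigma X}\circ F(h)=\Sigma'(\delta_X)\circ\bigl(\omega_X\circ F(h)\bigr),
\]
so that square commutes too. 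Hence the candidate triangle for $F'$ is isomorphic to a distinguished triangle and is therefore itself distinguished, which shows $(F',\omega')$ is a triangle functor.

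I do not expect a genuine obstacle here: the statement is essentially a bookkeeping exercise, amounting to the observation that conjugation by the natural isomorphism $\delta$ transports the triangle structure from $F$ to $F'$. The only place demanding care is keeping straight the directions of the naturality squares for $\delta$ and the appearances of $\Sigma'$ applied to components of $\delta$; the sole external input is the standard fact that a candidate triangle isomorphic to a distinguished one is distinguished. Spelling the argument out as above, rather than merely invoking ``transport of structure'', has the advantage of making the uniqueness clause completely transparent.
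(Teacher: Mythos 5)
Your proof is correct and follows exactly the paper's approach: the paper takes the same forced formula $\omega'=\Sigma'\delta\circ\omega\circ(\delta\Sigma)^{-1}$ and declares the remaining checks "direct to verify," which is precisely what you have spelled out. Nothing is missing.
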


\begin{proof}
Take $\omega'=\Sigma' \delta\circ \omega\circ (\delta\Sigma)^{-1}$. The statements are direct to verify.
\end{proof}

The following standard fact will be used later. We provide a full proof for completeness.

\begin{lem}\label{lem:twofun}
Let $F, G\colon \mathcal{A}\rightarrow \mathcal{B}$ be two additive functors between additive categories. Assume that $\mathcal{C}\subseteq \mathcal{A}$ is a full subcategory such that any object in $\mathcal{A}$ is isomorphic to a finite direct sum of objects in $\mathcal{C}$. Let $\eta\colon F|_\mathcal{C}\rightarrow G|_\mathcal{C}$ be a natural transformation. Then there is a unique natural transformation $\eta'\colon F\rightarrow G$ extending $\eta$. Moreover, if $\eta$ is an isomorphism, so is $\eta'$.
\end{lem}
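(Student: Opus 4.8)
The plan is to reduce everything to the case of a single direct sum decomposition and then check naturality by diagram-chasing. First I would fix, for each object $A\in\mathcal{A}$, an isomorphism $\theta_A\colon A\xrightarrow{\sim} C_1\oplus\cdots\oplus C_n$ with all $C_i\in\mathcal{C}$; using the axiom of choice for the class of objects of $\mathcal{A}$, this is legitimate. The obvious definition is
\[
\eta'_A=G(\theta_A)^{-1}\circ\big(\eta_{C_1}\oplus\cdots\oplus\eta_{C_n}\big)\circ F(\theta_A),
\]
where I use that both $F$ and $G$ are additive, so they commute with finite biproducts up to the canonical isomorphisms; concretely $F(C_1\oplus\cdots\oplus C_n)\cong F(C_1)\oplus\cdots\oplus F(C_n)$ via the images of the structure maps, and similarly for $G$. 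When $A$ already lies in $\mathcal{C}$ one must of course check $\eta'_A=\eta_A$, which follows because one may take $\theta_A=\mathrm{id}_A$ (or, if one insists on a uniform choice, one checks independence of the chosen decomposition as part of naturality below).

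The main work is naturality: given $f\colon A\to A'$ in $\mathcal{A}$, I must show $\eta'_{A'}\circ F(f)=G(f)\circ\eta'_A$. Transporting along the chosen isomorphisms $\theta_A,\theta_{A'}$, this reduces to the statement for the induced morphism $g=\theta_{A'}\circ f\circ\theta_A^{-1}\colon \bigoplus_i C_i\to\bigoplus_j C'_j$. Such a $g$ is a matrix of morphisms $g_{ji}\colon C_i\to C'_j$ with $C_i, C'_j\in\mathcal{C}$, and since $F$ and $G$ are additive they send $g$ to the corresponding matrices $(F(g_{ji}))$ and $(G(g_{ji}))$ with respect to the biproduct decompositions. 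Then the required identity is checked entry by entry: the $(j,i)$-entry of $\eta'\circ F(g)$ is $\eta_{C'_j}\circ F(g_{ji})$ and the $(j,i)$-entry of $G(g)\circ\eta'$ is $G(g_{ji})\circ\eta_{C_i}$, and these agree by the naturality of the original $\eta$ on $\mathcal{C}$. This simultaneously settles the $A\in\mathcal{C}$ compatibility and the independence of the choice of decomposition, because two decompositions of the same object are related by an isomorphism in $\mathcal{A}$ to which naturality applies.

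For uniqueness, suppose $\eta''\colon F\to G$ is another natural transformation with $\eta''|_{\mathcal{C}}=\eta$. For any $A$ pick the chosen isomorphism $\theta_A\colon A\xrightarrow{\sim}\bigoplus_i C_i$; naturality of $\eta''$ with respect to $\theta_A$ gives $\eta''_A=G(\theta_A)^{-1}\circ\eta''_{\bigoplus C_i}\circ F(\theta_A)$, so it suffices to see $\eta''$ is determined on finite direct sums of objects of $\mathcal{C}$. But naturality with respect to the inclusion and projection morphisms of a biproduct $\bigoplus_i C_i$ (which lie in $\mathcal{A}$) forces $\eta''_{\bigoplus C_i}$ to be the direct sum of the $\eta''_{C_i}=\eta_{C_i}$, hence equal to $\eta'_{\bigoplus C_i}$; therefore $\eta''=\eta'$. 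Finally, if $\eta$ is an isomorphism then each $\eta_{C_i}$ is invertible, so each $\eta'_A$, being conjugate to a direct sum of the $\eta_{C_i}$, is invertible, and thus $\eta'$ is a natural isomorphism. I expect the only mildly delicate point to be bookkeeping the additive-functor biproduct comparison isomorphisms consistently; once one commits to identifying $F$ of a biproduct with the biproduct of the $F(C_i)$ via the structure maps, the rest is a routine matrix computation.
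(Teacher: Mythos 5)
Your proposal is correct and follows essentially the same route as the paper: choose a decomposition isomorphism for each object, define $\eta'_A$ by conjugating the direct sum of the $\eta_{C_i}$, and verify naturality by reducing to the matrix of components and invoking naturality of $\eta$ on $\mathcal{C}$. Your explicit uniqueness argument via the biproduct structure maps is a welcome addition that the paper leaves implicit, but it does not change the nature of the proof.
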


\begin{proof}
For each object $A$ in $\mathcal{A}$, we choose an isomorphism $\xi_A\colon A\rightarrow \oplus_{i\in I}C_i$ with each $C_i\in \mathcal{C}$ and $I$ a finite set. We make the choice such that $\xi_C$ is the identity morphism for each object $C$ in $\mathcal{C}$. Here, we use the axiom of choice for the class of objects in $\mathcal{A}$.

We define $\eta'_A\colon F(A)\rightarrow G(A)$ to be $G(\xi_A)^{-1}\circ (\oplus_{i\in I}\eta_{C_i})\circ F(\xi_A)$. Here, we identify $F(\oplus_{i\in I}C_i)$ with $\oplus_{i\in I}F(C_i)$, $G(\oplus_{i\in I}C_i)$ with $\oplus_{i\in I}G(C_i)$. In particular, $\eta'_C=\eta_C$ for $C\in \mathcal{C}$ by our choice.

We claim that  the morphism  $\eta'_A$ is natural in $A$. For this, we take an arbitrary morphism $f\colon A\rightarrow A'$ in $\mathcal{A}$. For the object $A'$, we have the chosen isomorphism $\xi_{A'}\colon A'\rightarrow \oplus_{j\in J }C'_j$ with $C'_j\in \mathcal{C}$ and $J$ is a finite set. Then we have the following commutative diagram
\[
\xymatrix{
A\ar[d]_-{f}\ar[rr]^-{\xi_A} && \bigoplus_{i\in I} C_i\ar[d]^-{(f_{ji})_{(i, j)\in I\times J} }\\
A'\ar[rr]^-{\xi_{A'}} && \bigoplus_{j\in J} C'_j
}
\]
Here, each $f_{ji}\colon C_i\rightarrow C'_j$ is a morphism in $\mathcal{C}$. In the following commutative diagram, the middle square uses the naturality of $\eta$.
\[
\xymatrix{
F(A) \ar[d]_-{F(f)} \ar[r]^-{F(\xi_A)} & \bigoplus_{i\in I} F(C_i)\ar[d]^-{(F(f_{ji}))}  \ar[rr]^-{\bigoplus_{i\in I}\eta_{C_i}} && \bigoplus_{i\in I}G(C_i)  \ar[d]^-{(G(f_{ji}))} \ar[r]^-{G(\xi_A)^{-1}} & G(A)\ar[d]^-{G(f)}\\
F(A')  \ar[r]^-{F(\xi_{A'})} & \bigoplus_{j\in J} F(C'_j) \ar[rr]^-{\bigoplus_{j\in J}\eta_{C'_j}} && \bigoplus_{j\in J}G(C'_j) \ar[r]^-{G(\xi_{A'})^{-1}} & G(A')
}\]
The outer commutative  diagram proves that $\eta'_{A'}\circ F(f)=G(f)\circ \eta'_A$, as required.

 We mention that the above argument actually proves that $\eta'_A$ is independent of the choice of the isomorphism $\xi_A$, by taking $A'=A$ and $f={\rm Id}_A$.
\end{proof}

Let $k$ be a commutative ring. Let $\mathcal{A}$ be a $k$-linear additive category. For a set $\mathcal{M}$ of morphisms in $\mathcal{A}$, we denote by ${\rm obj}(\mathcal{M})$ the full subcategory formed by those objects, which are either the domain or the codomain of a morphism in $\mathcal{M}$. We say that  $\mathcal{M}$  linearly \emph{spans} $\mathcal{A}$ provided that each morphism in ${\rm obj}(\mathcal{M})$ is a $k$-linear combination of the identity morphisms and  composition of morphisms from $\mathcal{M}$, and that each object in $\mathcal{A}$ is isomorphic to a finite direct sum of objects in ${\rm obj}(\mathcal{M})$.

\begin{lem}\label{lem:iso-gen}
Let $\mathcal{M}$ be a spanning set of morphisms in $\mathcal{A}$. Assume that $F\colon \mathcal{A}\rightarrow \mathcal{A}$ is a $k$-linear endofunctor such that $F(f)=f$ for each $f\in \mathcal{M}$. Then there is a unique natural  isomorphism $\theta\colon F\rightarrow {\rm Id}_\mathcal{A}$ satisfying $\theta_S={\rm Id}_S$ for each object $S$ from ${\rm obj}(\mathcal{M})$.
\end{lem}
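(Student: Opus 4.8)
The plan is to build $\theta$ on the subcategory $\mathrm{obj}(\mathcal{M})$ first, forcing it to be the identity there, and then to extend uniquely to all of $\mathcal{A}$ using Lemma \ref{lem:twofun} applied with $\mathcal{C}=\mathrm{obj}(\mathcal{M})$ (note $\mathcal{C}$ satisfies the direct-sum hypothesis of that lemma, by the definition of a spanning set). The only genuine content is to check that the assignment $S\mapsto \mathrm{Id}_S$ for $S\in\mathrm{obj}(\mathcal{M})$ is in fact natural, i.e.\ constitutes a natural transformation $F|_{\mathcal{C}}\to \mathrm{Id}_{\mathcal{C}}$; once that is done, Lemma \ref{lem:twofun} hands us a unique natural transformation $\theta\colon F\to\mathrm{Id}_\mathcal{A}$ restricting to it, and the ``moreover'' clause of that lemma gives that $\theta$ is an isomorphism since each $\mathrm{Id}_S$ is.

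For naturality on $\mathcal{C}$, I must show that for every morphism $g\colon S\to T$ with $S,T\in\mathrm{obj}(\mathcal{M})$ one has $F(g)=g$ (the naturality square for $\mathrm{Id}$ with vertical maps $\mathrm{Id}_S,\mathrm{Id}_T$ reads exactly $g\circ F(g)^{-1}=\ldots$, equivalently $F(g)=g$). By the spanning hypothesis, $g$ is a $k$-linear combination of identity morphisms and of composites of morphisms drawn from $\mathcal{M}$. Since $F$ is $k$-linear and additive, $F$ preserves such $k$-linear combinations; since $F$ is a functor it preserves identities and composites; and by assumption $F(f)=f$ for each $f\in\mathcal{M}$. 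Hence $F(g)=g$ for every morphism $g$ between objects of $\mathrm{obj}(\mathcal{M})$. (One small point to spell out: a priori the $k$-linear combination realizing $g$ may involve composites whose intermediate objects lie in $\mathrm{obj}(\mathcal{M})$, which is fine, or in $\mathcal{A}$ more generally; but the definition of spanning is phrased so that the building blocks are morphisms from $\mathcal{M}$, whose domains and codomains are in $\mathrm{obj}(\mathcal{M})$ by definition, so all composites stay inside $\mathrm{obj}(\mathcal{M})$, and $F$ restricted there still acts by the identity on each $f\in\mathcal{M}$.)

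With $F(g)=g$ established for all $g$ in $\mathcal{C}$, the family $(\mathrm{Id}_S)_{S\in\mathrm{obj}(\mathcal{M})}$ is trivially a natural isomorphism $F|_\mathcal{C}\to\mathrm{Id}_\mathcal{C}$. Apply Lemma \ref{lem:twofun} to get a unique natural transformation $\theta\colon F\to\mathrm{Id}_\mathcal{A}$ with $\theta_S=\mathrm{Id}_S$ on $\mathrm{obj}(\mathcal{M})$, automatically an isomorphism. Uniqueness of $\theta$ subject to $\theta_S=\mathrm{Id}_S$ on $\mathrm{obj}(\mathcal{M})$ is precisely the uniqueness clause of Lemma \ref{lem:twofun}. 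I expect no real obstacle here; the only thing requiring care is the bookkeeping in the previous paragraph, namely confirming that ``$F$ fixes $\mathcal{M}$'' really does propagate to ``$F$ fixes every morphism of $\mathrm{obj}(\mathcal{M})$'' via $k$-linearity and functoriality — this is where the precise wording of ``spanning'' is used — after which everything is a direct appeal to Lemma \ref{lem:twofun}.
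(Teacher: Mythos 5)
Your proposal is correct and follows essentially the same route as the paper: the spanning hypothesis together with $k$-linearity and functoriality of $F$ forces $F$ to act as the identity on all morphisms (hence objects) of ${\rm obj}(\mathcal{M})$, and then Lemma \ref{lem:twofun} applied to $F$ and ${\rm Id}_\mathcal{A}$ with $\mathcal{C}={\rm obj}(\mathcal{M})$ gives existence, uniqueness, and invertibility of $\theta$. The paper's proof is just a terser version of the same argument.
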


\begin{proof}
The assumption implies that $F(S)=S$ for any object $S$ from ${\rm obj}(\mathcal{M})$. Moreover, the restriction of $F$ on ${\rm obj}(\mathcal{M})$ is the identity functor, since it acts on morphisms by the identity. Applying Lemma \ref{lem:twofun} to $F$ and ${\rm Id}_\mathcal{A}$, we are done.
\end{proof}

\subsection{Almost vanishing morphisms and centers}

Throughout this subsection,  $k$ will be a field and $\mathcal{T}$ will be a $k$-linear triangulated category, which is Hom-finite and Krull-Schmidt.

Following \cite[Definition 2.1]{Ku}, a nonzero morphism $w\colon Z\rightarrow X$  in $\mathcal{T}$ is \emph{almost vanishing} provided that $f\circ w=0$ and $w\circ g=0$ for any non-section $f\colon X\rightarrow A$ and non-retraction $g\colon B\rightarrow Z$. This happens if and only if $w$ fits into an almost split triangle $\Sigma^{-1}X\rightarrow E\rightarrow Z\stackrel{w}\rightarrow X$; see \cite[I.4.1]{Hap88}. In particular, both $Z$ and $X$ are indecomposable.

\begin{prop}\label{prop:almvan}
 Assume that $X\stackrel{f}\rightarrow Y \stackrel{g}\rightarrow Z \stackrel{h}\rightarrow \Sigma X$ is an exact triangle in $\mathcal{T}$ with $g\neq 0$ and $h\neq 0$ such that ${\rm End}_\mathcal{T}(Z)$ either equals $k$ or $k{\rm Id}_Z\oplus k\Delta$, where the morphism $\Delta\colon Z\rightarrow Z$ is almost vanishing. Then for a nonzero scalar $\lambda$, the triangle $X\stackrel{f}\rightarrow Y \stackrel{g}\rightarrow Z \xrightarrow{\lambda h} \Sigma(X)$ is exact if and only if $\lambda =1$.
\end{prop}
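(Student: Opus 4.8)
The plan is to compare the two exact triangles via the standard uniqueness argument for cones. Suppose that $X\stackrel{f}\rightarrow Y\stackrel{g}\rightarrow Z\xrightarrow{\lambda h}\Sigma(X)$ is also exact for some nonzero $\lambda$. Since the first two maps $f,g$ are shared, the usual axioms of a triangulated category produce an automorphism $\phi\colon Z\rightarrow Z$ fitting into a commutative diagram
\[
\xymatrix{
X\ar@{=}[d]\ar[r]^-{f} & Y\ar@{=}[d]\ar[r]^-{g} & Z\ar[d]^-{\phi}\ar[r]^-{h} & \Sigma(X)\ar@{=}[d]\\
X\ar[r]^-{f} & Y\ar[r]^-{g} & Z\ar[r]^-{\lambda h} & \Sigma(X)
}
\]
so that $\phi\circ g=g$ and $(\lambda h)\circ\phi=h$, i.e. $\lambda\cdot h\circ\phi=h$. (Here one uses that, given the two triangles agree in their first two terms and first map, there is a morphism of triangles which is the identity on $X$ and $Y$; the third component $\phi$ is then automatically an isomorphism.) First I would record these two identities: $g=\phi\circ g$ and $h=\lambda\, h\circ\phi$.

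Next I would exploit the hypothesis on ${\rm End}_\mathcal{T}(Z)$. Write $\phi=a\,{\rm Id}_Z+b\,\Delta$ with $a,b\in k$ in the second case (in the first case simply $\phi=a\,{\rm Id}_Z$, $b=0$); since $\phi$ is an automorphism and $\Delta$ is nilpotent (as $\Delta^2=0$, being almost vanishing with indecomposable source and target, $\Delta$ lies in the radical and in fact $\Delta\circ\Delta=0$), we must have $a\neq 0$. Now use $g=\phi\circ g=a\,g+b\,\Delta\circ g$. The key point is that $\Delta\colon Z\rightarrow Z$ is almost vanishing, hence $\Delta\circ g=0$ because $g\colon Y\rightarrow Z$ cannot be a retraction: if $g$ were a retraction then $h=0$, contradicting the hypothesis $h\neq 0$. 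Therefore $g=a\,g$, and since $g\neq 0$ we get $a=1$. Similarly, from $h=\lambda\,h\circ\phi=\lambda a\,h+\lambda b\,h\circ\Delta$ and the fact that $h\circ\Delta=0$ (because $\Delta$ is almost vanishing and $h\colon Z\rightarrow \Sigma(X)$ is not a section, as otherwise $g=0$, contradicting $g\neq 0$), we obtain $h=\lambda a\,h=\lambda\,h$, and since $h\neq 0$ this forces $\lambda=1$. The converse is trivial.

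I expect the main obstacle to be the careful justification that $g$ is not a retraction and $h$ is not a section — i.e. connecting the vanishing conditions defining an almost vanishing morphism to the given non-degeneracy hypotheses $g\neq 0$, $h\neq 0$ — together with verifying that $\Delta$ is genuinely in the Jacobson radical of ${\rm End}_\mathcal{T}(Z)$ so that $a\neq 0$; these are the places where the Hom-finite Krull--Schmidt assumption and the structure of ${\rm End}_\mathcal{T}(Z)$ really enter. The rest is formal manipulation inside the two-dimensional (or one-dimensional) endomorphism algebra.
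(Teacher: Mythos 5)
Your proposal is correct and follows essentially the same route as the paper: obtain a morphism of triangles that is the identity on $X$ and $Y$, write the induced endomorphism of $Z$ as $a\,{\rm Id}_Z+b\,\Delta$, and use $\Delta\circ g=0$ (since $g$ is a non-retraction, else $h=0$) to get $a=1$ and $h\circ\Delta=0$ (since $h$ is a non-section, else $g=0$) to get $\lambda=1$. The extra observations about invertibility of $\phi$ and nilpotence of $\Delta$ are not needed but do no harm.
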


\begin{proof}
We observe that $g$ is a non-retraction, otherwise $h=0$. Similarly, $h$ is a non-section. Assume that the given triangle is exact. Then we have an isomorphism $\xi\colon Z\rightarrow Z$ making the following diagram commute.
\[\xymatrix{
X \ar[r]^-{f} \ar@{=}[d]& Y \ar@{=}[d] \ar[r]^-{g} & Z \ar@{.>}[d]^-{\xi} \ar[r]^-{h}  & \Sigma(X) \ar@{=}[d]\\
X \ar[r]^-{f} & Y \ar[r]^-{g} & Z \ar[r]^-{\lambda h}  & \Sigma(X)}
\]
If ${\rm End}_\mathcal{T}(Z)=k$, we assume that $\xi=\mu {\rm Id}_Z$ for some $\mu\in k$. It follows from the middle square that $\mu=1$, and thus $\lambda=1$ from the right square.

In the second case, we assume that $\xi=\mu {\rm Id}_Z+\gamma \Delta$ for some $\mu, \gamma\in k$. By the middle square and the fact that $\Delta\circ g=0$, we have $\mu=1$. By the right square and the fact that $h\circ \Delta=0$, we infer that $\lambda=1$.
\end{proof}

We denote by ${\rm ind}\mathcal{T}$ a complete set of representatives of isomorphism classes of indecomposable objects in $\mathcal{T}$. Here, ${\rm ind}\mathcal{T}$ is indeed a set, since $\mathcal{T}$ is skeletally small. Denote by $\Lambda$ the subset consisting of these objects $X$ with an almost vanishing morphism $\Delta_X\colon X\rightarrow X$ such that $\Delta_X$ is central in ${\rm End}_\mathcal{T}(X)$.

 The following is a variant of \cite[Lemma 2.2]{Ku}; compare \cite[Remark 4.15]{Rou2}.

\begin{lem}\label{lem:Ric} For each $X\in \Lambda$, we associate a scalar $\lambda_X$. Then there is a unique natural isomorphism $\eta\colon{\rm Id}_\mathcal{T}\rightarrow {\rm Id}_\mathcal{T}$ such that $\eta_X={\rm Id}_X+\lambda_X\Delta_X$ for $X\in \Lambda$ and $\eta_Y={\rm Id}_Y$ for $Y\in {\rm ind}\mathcal{T}\backslash\Lambda$.
\end{lem}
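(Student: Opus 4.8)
The plan is to build the natural transformation $\eta$ by prescribing its components on indecomposables and then invoking the extension machinery of Lemma~\ref{lem:twofun}. Concretely, for each $X \in \Lambda$ set $\eta_X = {\rm Id}_X + \lambda_X \Delta_X$, which is an automorphism of $X$ because $\Delta_X$ is nilpotent (an almost vanishing morphism satisfies $\Delta_X \circ \Delta_X = 0$, since $\Delta_X$ is both a non-section and a non-retraction composed with itself); and for $Y \in {\rm ind}\,\mathcal{T} \setminus \Lambda$ set $\eta_Y = {\rm Id}_Y$. Since $\mathcal{T}$ is Krull--Schmidt, every object is a finite direct sum of objects from ${\rm ind}\,\mathcal{T}$, so Lemma~\ref{lem:twofun} (applied with $F = G = {\rm Id}_\mathcal{T}$ and $\mathcal{C} = {\rm ind}\,\mathcal{T}$) will produce a unique natural transformation $\eta \colon {\rm Id}_\mathcal{T} \to {\rm Id}_\mathcal{T}$ extending this assignment, and it will automatically be an isomorphism since each prescribed component is.

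The real content is checking that the assignment on ${\rm ind}\,\mathcal{T}$ is itself natural, i.e.\ that for every morphism $f \colon X \to Y$ between indecomposables one has $\eta_Y \circ f = f \circ \eta_X$. Writing this out, the claim reduces to $\lambda_Y\, \Delta_Y \circ f = \lambda_X\, f \circ \Delta_X$ (with the convention $\lambda_Z = 0$ or $\Delta_Z$ undefined when $Z \notin \Lambda$, in which case the corresponding side is just $0$). The key point is that an almost vanishing morphism $\Delta_X \colon X \to X$ kills every non-retraction into $X$ and every non-section out of $X$. So I would split into cases according to whether $f$ is an isomorphism. If $f$ is an isomorphism, then $X \cong Y$, so $X = Y$ in ${\rm ind}\,\mathcal{T}$, and either both lie in $\Lambda$ with $\lambda_X = \lambda_Y$, or neither does; in the former case naturality follows because $\Delta_X$ is central in ${\rm End}_\mathcal{T}(X)$ by the definition of $\Lambda$, so $\Delta_X \circ f = f \circ \Delta_X$. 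If $f$ is not an isomorphism, then (since $X,Y$ are indecomposable with local endomorphism rings) $f$ is neither a section nor a retraction; hence $\Delta_Y \circ f = 0$ whenever $Y \in \Lambda$ (as $f$ is a non-retraction into $Y$), and $f \circ \Delta_X = 0$ whenever $X \in \Lambda$ (as $f$ is a non-section out of $X$), so both sides of the naturality equation vanish.

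For the uniqueness statement, I would argue that any natural isomorphism $\eta'$ with the prescribed values on ${\rm ind}\,\mathcal{T}$ must agree with $\eta$ on all of $\mathcal{T}$: this is exactly the uniqueness clause of Lemma~\ref{lem:twofun}, since both $\eta$ and $\eta'$ restrict to the same natural transformation on the spanning subcategory ${\rm ind}\,\mathcal{T}$.

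The main obstacle I anticipate is purely bookkeeping: handling uniformly the cases where $X$ or $Y$ fails to be in $\Lambda$ (so one of the almost vanishing morphisms simply does not exist), and making sure the ``non-isomorphism implies non-section and non-retraction'' step is correctly justified from the Krull--Schmidt/local-endomorphism-ring hypothesis. There is no deep difficulty — everything rests on the defining annihilation property of almost vanishing morphisms and on centrality of $\Delta_X$ in ${\rm End}_\mathcal{T}(X)$, which is built into the definition of $\Lambda$ — but one must be careful that the centrality hypothesis (rather than some weaker statement) is genuinely what makes the isomorphism case of naturality work.
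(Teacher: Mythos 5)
Your proposal is correct and follows essentially the same route as the paper: prescribe $\eta$ on ${\rm ind}\,\mathcal{T}$, verify naturality there via the annihilation property of almost vanishing morphisms together with the centrality of $\Delta_X$, and invoke Lemma~\ref{lem:twofun} for the (unique) extension. The only difference is cosmetic — you organize the case analysis by whether $f$ is an isomorphism, while the paper splits according to membership of the source and target in $\Lambda$ — and both reduce to the same two facts.
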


\begin{proof}
We view ${\rm ind}\mathcal{T}$ as a full subcategory of $\mathcal{T}$. By Lemma \ref{lem:twofun}, it suffices to verify that the restriction  of the isomorphism $\eta$ on ${\rm ind}\mathcal{T}$ is natural. For this, we take an arbitrary  morphism $f\colon Y\rightarrow Z$ with $Y, Z\in {\rm ind}\mathcal{T}$. We claim that $\eta_Z\circ f=f\circ \eta_Y$.

If neither $Y$ nor $Z$ lies in $\Lambda$, the claim is clear. If $Y$ lies in $\Lambda$ and $Z$ does not lie in $\Lambda$, we have $f\circ \Delta_Y=0$, since $f$ is a non-section and $\Delta_Y$ is almost vanishing. Then the claim follows. The same argument works for the case $Y\in \Lambda$ and $Z\notin \Lambda$.

 For the rest, we may assume that both $Y$ and $Z$ lie in $\Lambda$.  If $Y=Z$, then the claims follows, since $\Delta_Y$ and thus $\eta_Y$ are central in ${\rm End}_\mathcal{T}(Y)$. If $Y\neq Z$, we have $\Delta_Z\circ f=0=f\circ \Delta_Y$. This implies the claim. We are done.
\end{proof}

Let $\mathcal{A}$ be a $k$-linear  additive category. We denote by $Z(\mathcal{A})$ the \emph{center} of $\mathcal{A}$, which is by definition the set of natural transformations $\lambda\colon {\rm Id}_\mathcal{A}\rightarrow {\rm Id}_\mathcal{A}$. To ensure that $Z(\mathcal{A})$ is indeed a set, we use the assumption that $\mathcal{A}$ is skeletally small. Then $Z(\mathcal{A})$ is a commutative $k$-algebra, whose addition and multiplication are induced by the addition and composition of natural transformations, respectively.

 We denote by $Z_\vartriangle (\mathcal{T})$ the \emph{triangle center} of $\mathcal{T}$, which is the set of natural transformations $\lambda\colon {\rm Id}_\mathcal{T}\rightarrow {\rm Id}_\mathcal{T}$ between  triangle functors, equivalently, the natural transformation $\lambda$ satisfies $\lambda \Sigma=\Sigma \lambda$. Then $Z_\vartriangle (\mathcal{T})$ is a subalgebra of $Z(\mathcal{\mathcal{T}})$. We mention that $Z_\vartriangle (\mathcal{T})$ is the zeroth component of the graded center of $\mathcal{T}$; compare \cite{Ku, Lin}.

The following observation will be useful.

\begin{lem}\label{lem:nat}
Let $(F, \omega)\colon \mathcal{T}\rightarrow \mathcal{T}$ be a triangle autoequivalence. Then any natural transformation $(F, \omega)\rightarrow (F, \omega)$ of triangle functors is of the form $F\lambda$ for a uniquely determined $\lambda\in Z_\vartriangle(\mathcal{T})$. \hfill $\square$
\end{lem}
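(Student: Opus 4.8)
The plan is to exploit the fact that $F$ is an \emph{equivalence}, so it admits a quasi-inverse, and then transport natural transformations along $F$. First I would fix a quasi-inverse $(H, \chi)$ of $(F, \omega)$ together with triangle natural isomorphisms $\epsilon\colon FH \rightarrow {\rm Id}_\mathcal{T}$ and $\iota\colon {\rm Id}_\mathcal{T} \rightarrow HF$; note that the standard adjunction machinery lets us arrange these to be compatible with the connecting isomorphisms, so $\epsilon$ and $\iota$ are natural transformations of triangle functors in the sense recalled before Lemma~\ref{lem:iso}. Given a natural transformation $\alpha\colon (F,\omega)\rightarrow (F,\omega)$ of triangle functors, I would set $\lambda := \epsilon \circ F H \alpha$ something of that shape; more precisely, the cleanest definition is
\[
\lambda \;=\; (\epsilon \cdot {\rm Id}_\mathcal{T}) \circ (H\alpha \cdot \iota^{-1}) \;\colon\; {\rm Id}_\mathcal{T} \longrightarrow {\rm Id}_\mathcal{T},
\]
obtained by whiskering $\alpha$ with $H$ on the left to get $H\alpha\colon HF \rightarrow HF$ and then conjugating by the unit $\iota$ to land on ${\rm Id}_\mathcal{T}$. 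One then checks that $F\lambda = \alpha$ using the triangle identities relating $\epsilon$ and $\iota$.

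Next I would verify that $\lambda$ lies in $Z_\vartriangle(\mathcal{T})$, i.e.\ that $\lambda\Sigma = \Sigma\lambda$. This is where the triangle-functor compatibility of $\alpha$, $\epsilon$, $\iota$, $\omega$, and $\chi$ all get used: each of the pieces in the definition of $\lambda$ is a morphism of triangle functors, and the collection of morphisms of triangle functors ${\rm Id}_\mathcal{T}\rightarrow{\rm Id}_\mathcal{T}$ is exactly $Z_\vartriangle(\mathcal{T})$ by definition, so $\lambda$ is automatically in the triangle center once it is assembled from such pieces. Concretely, one writes out the defining condition $\Sigma\lambda \circ ({\rm Id}) = ({\rm Id}) \circ \lambda\Sigma$ and chases it through the two compatibility squares; this is routine but needs the connecting isomorphism identities $\omega^{-1}$, $\chi$ set up carefully.

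For uniqueness: if $F\lambda = F\lambda'$ with $\lambda, \lambda' \in Z_\vartriangle(\mathcal{T})$, then whiskering with $H$ gives $HF\lambda = HF\lambda'$, and conjugating by $\iota$ gives $\lambda = \lambda'$ since $F$ is an equivalence (in particular faithful, so $F$ reflects equality of natural transformations). Equivalently, $F\lambda = F\lambda' \Rightarrow \lambda_X$ and $\lambda'_X$ agree after applying $F$, hence agree because $F$ is faithful; this already gives uniqueness of $\lambda$ as a plain natural transformation, and a fortiori as an element of $Z_\vartriangle(\mathcal{T})$.

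The main obstacle I expect is purely bookkeeping rather than conceptual: making sure all the chosen isomorphisms $\epsilon$, $\iota$, and the connecting isomorphism $\chi$ of the quasi-inverse are genuinely natural transformations \emph{of triangle functors}, so that the resulting $\lambda$ satisfies $\lambda\Sigma = \Sigma\lambda$ on the nose. The subtlety is that an arbitrary quasi-inverse of a triangle equivalence carries a canonical connecting isomorphism (built from $\omega$, $\epsilon$, $\iota$), but one must either cite this standard fact or check the two hexagon-type coherence diagrams; once that is in place, every step above is a short diagram chase. I would phrase the writeup so that this coherence is invoked as the known fact that a quasi-inverse of a triangle functor is again a triangle functor, with unit and counit being morphisms of triangle functors, and then give the formula for $\lambda$ and the one-line faithfulness argument for uniqueness.
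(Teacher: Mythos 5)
Your argument is correct, and since the paper states this lemma without proof (the $\square$ follows the statement immediately), there is no written proof to compare against; but it is worth noting that your route is heavier than necessary. The clean form of your construction is $\lambda=\iota^{-1}\circ H\alpha\circ\iota$ (conjugation of the whiskering $H\alpha\colon HF\rightarrow HF$ by the unit), and as you anticipate, the real cost is justifying that a quasi-inverse $H$ carries a connecting isomorphism making $\iota$ and $\epsilon$ morphisms of \emph{triangle} functors — a standard but not free coherence check. You can sidestep the quasi-inverse entirely: since $F$ is fully faithful, define $\lambda_X$ as the unique morphism with $F(\lambda_X)=\alpha_X$. Naturality of $\lambda$ follows from naturality of $\alpha$ by applying $F^{-1}$ on Hom-sets, and the condition $\lambda_{\Sigma X}=\Sigma(\lambda_X)$ follows by comparing the defining identity $\omega_X\circ\alpha_{\Sigma X}=\Sigma(\alpha_X)\circ\omega_X$ of a triangle-functor morphism with the naturality square of $\omega$ at $\lambda_X$, namely $\Sigma F(\lambda_X)\circ\omega_X=\omega_X\circ F\Sigma(\lambda_X)$, and then invoking faithfulness once more; uniqueness is your faithfulness argument. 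This uses only the data $(F,\omega)$ already present in the statement and avoids all unit/counit bookkeeping, which is presumably why the authors regarded the lemma as immediate.
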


Following \cite[Section 4]{CR}, we say that $\mathcal{T}$ is a \emph{block},  provided that $\mathcal{T}$ does not admit a decomposition into the product of  two nonzero triangulated subcategories.  Moreover, it is \emph{non-degenerate} if there is a nonzero non-invertible morphism $X\rightarrow Y$ between some indecomposable objects $X$ and $Y$.

\begin{prop}\label{prop:ZZ}
Let $\mathcal{T}$ be a non-degenerate block such that ${\rm End}_\mathcal{T}(X)=k$ for each indecomposable object $X$. Then the following statements hold.
\begin{enumerate}
\item We have $Z(\mathcal{T})=k=Z_\vartriangle(\mathcal{T})$.
\item If $({\rm Id}_\mathcal{T}, \omega)$ is a triangle functor, then $\omega={\rm Id}_\Sigma$, the identity transformation on $\Sigma$.
\end{enumerate}
\end{prop}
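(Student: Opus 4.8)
The plan is to exploit the fact that $\mathcal{T}$ is a block to show that any central natural transformation is a scalar, and then use the hypothesis on endomorphism rings together with non-degeneracy to pin the scalar down. For statement (1), let $\lambda\in Z(\mathcal{T})$. For each indecomposable $X$, the component $\lambda_X$ lies in ${\rm End}_\mathcal{T}(X)=k$, so we may regard $\lambda$ as a function $X\mapsto \lambda_X\in k$ on ${\rm ind}\mathcal{T}$. Naturality against an arbitrary morphism $f\colon X\rightarrow Y$ between indecomposables gives $\lambda_Y f=f\lambda_X$, i.e.\ $(\lambda_Y-\lambda_X)f=0$; so whenever there is a nonzero morphism between $X$ and $Y$ we get $\lambda_X=\lambda_Y$. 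The relation ``there is a nonzero morphism in one direction or the other'' generates, by transitivity of the induced equivalence relation, a partition of ${\rm ind}\mathcal{T}$; because $\mathcal{T}$ is a block this partition is trivial, so $\lambda_X$ is a single scalar $\mu$ independent of $X$. By Lemma \ref{lem:twofun} (applied with $\mathcal{C}={\rm ind}\mathcal{T}$), the natural transformation $\mu\,{\rm Id}_\mathcal{T}$ restricted to ${\rm ind}\mathcal{T}$ agrees with $\lambda$, hence $\lambda=\mu\,{\rm Id}_\mathcal{T}$ on all of $\mathcal{T}$. Thus $Z(\mathcal{T})=k$, and $Z_\vartriangle(\mathcal{T})\subseteq Z(\mathcal{T})=k$ with the scalars clearly commuting with $\Sigma$, so $Z_\vartriangle(\mathcal{T})=k$ as well.

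For statement (2), suppose $({\rm Id}_\mathcal{T},\omega)$ is a triangle functor, so $\omega\colon \Sigma\rightarrow\Sigma$ is a natural isomorphism. For each indecomposable $X$, the object $\Sigma X$ is again indecomposable, so $\omega_X\in{\rm End}_\mathcal{T}(\Sigma X)=k$; write $\omega_X=\mu_X\,{\rm Id}_{\Sigma X}$ with $\mu_X\in k^\times$. Naturality of $\omega$ against a nonzero morphism $f\colon X\rightarrow Y$ gives $\omega_Y\circ\Sigma f=\Sigma f\circ\omega_X$, i.e.\ $\mu_Y\,\Sigma f=\mu_X\,\Sigma f$, and since $\Sigma$ is an equivalence $\Sigma f\neq 0$, so $\mu_X=\mu_Y$; again the block hypothesis forces a single scalar $\mu\in k^\times$ with $\omega=\mu\,{\rm Id}_\Sigma$. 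It remains to show $\mu=1$, and this is the step where non-degeneracy enters. Take a nonzero non-invertible morphism $h\colon X\rightarrow Y$ between indecomposables. Complete it to an exact triangle $X\xrightarrow{h}Y\rightarrow Z\xrightarrow{g}\Sigma X$; since $h$ is not a split mono, $Z\ne 0$ and we may pick an indecomposable summand on which the relevant components are controlled, arranging $g\neq 0$ (if $g=0$ the triangle splits and $h$ is a split mono, contradiction). The triangle axiom for $({\rm Id}_\mathcal{T},\omega)$ says that $X\xrightarrow{h}Y\rightarrow Z\xrightarrow{\omega_X\circ g}\Sigma X$ is again exact, i.e.\ $X\xrightarrow{h}Y\rightarrow Z\xrightarrow{\mu g}\Sigma X$ is exact.

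Now I would invoke the rigidity already packaged in Proposition \ref{prop:almvan}: in the present setting ${\rm End}_\mathcal{T}(Z)=k$ for the indecomposable $Z$ (the degenerate case of that proposition's hypothesis), and one has $g\neq 0$ and the connecting map $h\neq 0$ in the rotated triangle $Z\xrightarrow{g}\Sigma X\rightarrow\Sigma Y\rightarrow\Sigma Z$ — or one simply rotates so that the hypotheses of Proposition \ref{prop:almvan} are literally met with the roles of the maps permuted — to conclude that the scalar multiplying the connecting morphism must be $1$, hence $\mu=1$. If $Z$ fails to be indecomposable one decomposes it and runs the argument on a summand through which $g$ is nonzero, noting that $h\neq0$ guarantees such a summand exists. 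Therefore $\omega=\,{\rm Id}_\Sigma$, as claimed. The main obstacle I anticipate is the bookkeeping in applying Proposition \ref{prop:almvan}: its statement is phrased for a triangle with specified nonvanishing of two of the three maps, so one must choose the rotation of the triangle $X\rightarrow Y\rightarrow Z\rightarrow\Sigma X$ (and, if necessary, an indecomposable summand of $Z$) for which both $g\neq 0$ and $h\neq 0$ hold simultaneously; non-degeneracy is exactly what supplies a triangle rigid enough for this to be possible.
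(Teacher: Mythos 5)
Your argument for (1) is correct and coincides with the paper's: the scalar propagates along nonzero morphisms between indecomposables, and the non-degenerate block hypothesis supplies a chain of nonzero Hom-spaces connecting any two indecomposables (the paper quotes this connectivity from \cite{CR}); Lemma \ref{lem:twofun} then extends the scalar to all objects. Your reduction of $\omega$ to a single scalar $\mu\,{\rm Id}_\Sigma$ at the start of (2) is likewise fine.

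The gap is in the final step of (2), and it is precisely the bookkeeping you flagged. Taking the cone of $h\colon X\to Y$ on the right gives $X\xrightarrow{h}Y\xrightarrow{u}Z\xrightarrow{g}\Sigma X$, so after applying $({\rm Id}_\mathcal{T},\omega)$ the scalar $\mu$ sits on the connecting map $g$ while the third object is the cone $Z$, which need not be indecomposable; the hypothesis on ${\rm End}_\mathcal{T}(Z)$ in Proposition \ref{prop:almvan} is therefore unavailable, and your assertion that ``${\rm End}_\mathcal{T}(Z)=k$ for the indecomposable $Z$'' is unjustified. Neither fallback closes this as written. Rotating does not ``literally'' meet the hypotheses: one rotation yields $Y\xrightarrow{u}Z\xrightarrow{\mu g}\Sigma X\xrightarrow{-\Sigma h}\Sigma Y$, where the scalar now multiplies the \emph{middle} map, about which Proposition \ref{prop:almvan} says nothing; one must additionally apply the isomorphism of triangles $({\rm Id}_Y,{\rm Id}_Z,\mu^{-1}{\rm Id}_{\Sigma X})$ to move the scalar onto the connecting map, obtaining the exact triangle $Y\xrightarrow{u}Z\xrightarrow{g}\Sigma X\xrightarrow{-\mu\Sigma h}\Sigma Y$, and only then does the proposition apply (third object $\Sigma X$ indecomposable, both relevant maps nonzero) to give $\mu=1$. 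Passing to an indecomposable summand of $Z$ does not work as stated: an exact triangle does not restrict to an exact triangle on a direct summand of one of its terms, and the argument of Proposition \ref{prop:almvan} uses control of the whole ring ${\rm End}_\mathcal{T}(Z)$, not just of one component of $g$. The paper sidesteps all of this by building the triangle the other way round: from the chosen nonzero non-invertible $g\colon X\to Y$ it forms $Z\xrightarrow{f}X\xrightarrow{g}Y\xrightarrow{h}\Sigma Z$, so the indecomposable $Y$ sits in the third slot, $h\neq 0$ because $g$ is not a split epimorphism between indecomposables, and the scalar coming from $\omega_Z$ lands directly on the connecting map $h$; Proposition \ref{prop:almvan} then applies verbatim.
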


\begin{proof}
For (1), it suffices to show that any natural transformation $\eta\colon {\rm Id}_\mathcal{T}\rightarrow {\rm Id}_\mathcal{T}$ is given by a scalar.  By assumption, $\eta_X=\lambda_X{\rm Id}_X$ for each indecomposable object $X$ and some scalar $\lambda_X$. In view of Lemma \ref{lem:twofun},  it suffices to show that $\lambda_X=\lambda_Y$ for any indecomposables $X$ and $Y$.

We observe that $\lambda_X=\lambda_Y$ provided that there is a nonzero map $X\rightarrow Y$ or $Y\rightarrow X$, using the naturality of $\eta$. Since $\mathcal{T}$ is a non-degenerate block, for any indecomposables $X$ and $Y$,  there is a sequence $X=X_0, X_1, \cdots, X_n=Y$ such that ${\rm Hom}_\mathcal{T}(X_i, X_{i+1})\neq 0$ or ${\rm Hom}_\mathcal{T}(X_{i+1}, X_i)\neq 0$; see \cite[Proposition 4.2 and Remark 4.7]{CR}. From this sequence we infer that $\lambda_X=\lambda_Y$.

For (2), we observe that $\omega=\Sigma(\eta)$ for a unique $\eta\in Z(\mathcal{T})$. By (1) we may assume that $\eta=\lambda\in k$. Take a nonzero non-invertible morphism $g\colon X\rightarrow Y$ between indecomposables and form an exact triangle  $Z\stackrel{f}\rightarrow X\stackrel{g}\rightarrow Y\stackrel{h}\rightarrow \Sigma(Z)$. Since $X$ is indecomposable, we observe that $h\neq 0$. Applying the triangle functor $({\rm Id}_\mathcal{T}, \omega)$ to this triangle, we obtain an exact triangle
$$Z\stackrel{f}\longrightarrow X\stackrel{g}\longrightarrow Y\stackrel{\lambda h}\longrightarrow \Sigma(Z).$$
By Proposition \ref{prop:almvan}, we infer that $\lambda=1$. Then we are done.
\end{proof}

\section{Pseudo-identities and centers}

In this section, we study triangle endofunctors on the bounded homotopy category of an additive category and on the bounded derived category of an abelian category. We introduce the notion of a pseudo-identity endofunctor on them. Their triangle centers are studied.

\subsection{Pseudo-identities on  bounded homotopy categories}

Let $\mathcal{A}$ be an additive category. We denote by $\mathbf{K}^b(\mathcal{A})$ the homotopy category of bounded complexes in $\mathcal{A}$. A bounded complex $X$ is visualized as follows
$$\cdots \longrightarrow X^n \stackrel{d_X^n} \longrightarrow X^{n+1}\stackrel{d_X^{n+1}}\longrightarrow X^{n+2}\longrightarrow \cdots$$
where $X^n\neq 0$ for only finitely many $n$'s and  the differentials satisfy $d_X^{n+1}\circ d_X^n=0$. The translation functor $\Sigma$ on complexes is defined such that $\Sigma(X)^n=X^{n+1}$ and $d_{\Sigma(X)}^n=-d_X^{n+1}$. For a chain map $f\colon X\rightarrow Y$, the translated chain map $\Sigma(f)\colon \Sigma(X)\rightarrow \Sigma(Y)$ is given by $\Sigma(f)^n=f^{n+1}$ for each $n\in \mathbb{Z}$.

An additive functor $G\colon \mathcal{A}\rightarrow \mathcal{B}$ induces a triangle functor $\mathbf{K}^b(G)\colon \mathbf{K}^b(\mathcal{A})\rightarrow \mathbf{K}^b(\mathcal{B})$, which acts componentwise on complexes and whose connecting isomorphism is trivial. Similarly, a natural transformation $\eta\colon G\rightarrow G'$ induces a natural transformation $\mathbf{K}^b(\eta)\colon \mathbf{K}^b(G)\rightarrow \mathbf{K}^b(G')$ between triangle functors.

For an object $A$ in $\mathcal{A}$, we denote by $A$ the corresponding stalk complex concentrated on degree zero. In this way, we view $\mathcal{A}$ as a full subcategory of $\mathbf{K}^b(\mathcal{A})$. For $A\in \mathcal{A}$ and $n\in \mathbb{Z}$, the corresponding stalk complex $\Sigma^n(A)$ is concentrated on degree $-n$.

For a complex $X$ and $n\in \mathbb{Z}$, we consider the \emph{brutal truncation} $\sigma_{\geq -n}X=\cdots\rightarrow 0\rightarrow X^{-n}\stackrel{d_X^{-n}}\rightarrow X^{1-n}\rightarrow \cdots$, which is a subcomplex of $X$. There is a projection $\pi_n\colon \sigma_{\geq -n}X\rightarrow \Sigma^n(X^{-n})$, and thus  an exact triangle in $\mathbf{K}^b(\mathcal{A})$
\begin{align}\label{eqy:tri}
\Sigma^{n-1}(X^{-n})\stackrel{f}\longrightarrow \sigma_{\geq 1-n}X\stackrel{i_n}\longrightarrow \sigma_{\geq -n}X\stackrel{\pi_n}\longrightarrow \Sigma^n(X^{-n}),
\end{align}
where $i_n$ is the inclusion map and $f$ is given by the minus differential $-d^{-n}_X\colon X^{-n}\rightarrow X^{1-n}$. Using these triangles, one observes that $\mathcal{A}$ is a generating subcategory of $\mathbf{K}^b(\mathcal{A})$.

\begin{lem}\label{lem:K-res}
Let $F\colon \mathbf{K}^b(\mathcal{A})\rightarrow \mathbf{K}^b(\mathcal{A})$ be a triangle functor satisfying $F(\mathcal{A})\subseteq \mathcal{A}$.  Then the following statements hold.
 \begin{enumerate}
 \item $F$ is fully faithful if and only if so is the restriction $F|_\mathcal{A}\colon \mathcal{A}\rightarrow \mathcal{A}$.
     \item If the restriction $F|_\mathcal{A}\colon \mathcal{A}\rightarrow \mathcal{A}$ is an equivalence, so is $F$.
         \item Assume that $\mathcal{A}$ has split idempotents. If $F$  is an equivalence, so is $F|_\mathcal{A}$.
     \end{enumerate}
\end{lem}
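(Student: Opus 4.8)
The plan is to deduce all three statements from Beilinson's Lemma (Lemma \ref{lem:Bei}), using that $\mathcal{A}$ is a generating subcategory of $\mathbf{K}^b(\mathcal{A})$ (via the truncation triangles \eqref{eqy:tri}), together with the basic observation that $F(\Sigma^n A) = \Sigma^n F(A)$ lies in the stalk complexes concentrated in degree $-n$, because the connecting isomorphism of $F$ carries $F\Sigma$ to $\Sigma F$. For (1), I would argue as follows. If $F$ is fully faithful, then since $\mathcal{A}$ is a full subcategory of $\mathbf{K}^b(\mathcal{A})$ and $F(\mathcal{A})\subseteq\mathcal{A}$, the restriction $F|_\mathcal{A}$ is automatically fully faithful. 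Conversely, suppose $F|_\mathcal{A}$ is fully faithful. For $A,B\in\mathcal{A}$ and $n\in\mathbb{Z}$, the group ${\rm Hom}_{\mathbf{K}^b(\mathcal{A})}(A,\Sigma^n B)$ is ${\rm Hom}_\mathcal{A}(A,B)$ for $n=0$ and vanishes for $n\neq 0$ (stalk complexes in different degrees have no homotopy classes of maps, and for $n=0$ chain maps between degree-zero complexes are just morphisms with trivial homotopies); the same holds on the target side for $F(A),F(B)\in\mathcal{A}$. Hence $F$ induces isomorphisms ${\rm Hom}_{\mathbf{K}^b(\mathcal{A})}(A,\Sigma^n B)\to{\rm Hom}_{\mathbf{K}^b(\mathcal{A})}(FA,F\Sigma^n B)$ for all $A,B\in\mathcal{A}$ and all $n$ — for $n=0$ this is full faithfulness of $F|_\mathcal{A}$, and for $n\neq 0$ both sides are zero. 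Since $\mathcal{A}$ generates, Lemma \ref{lem:Bei} gives that $F$ is fully faithful.

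For (2), assume $F|_\mathcal{A}$ is an equivalence. It is in particular fully faithful, so $F$ is fully faithful by (1). For density, I would invoke the last sentence of Lemma \ref{lem:Bei}: $F$ is dense provided its essential image contains a generating subcategory of the target. Since $F|_\mathcal{A}$ is dense, every object of $\mathcal{A}\subseteq\mathbf{K}^b(\mathcal{A})$ (as a full subcategory) lies in the essential image of $F$; and $\mathcal{A}$ is a generating subcategory of $\mathbf{K}^b(\mathcal{A})$. Hence $F$ is dense, so an equivalence. For (3), assume $\mathcal{A}$ has split idempotents and $F$ is an equivalence. Full faithfulness of $F|_\mathcal{A}$ is immediate from that of $F$ together with $F(\mathcal{A})\subseteq\mathcal{A}$ and fullness of the embedding $\mathcal{A}\hookrightarrow\mathbf{K}^b(\mathcal{A})$. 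For density of $F|_\mathcal{A}$: given $A\in\mathcal{A}$, since $F$ is dense there is a bounded complex $X$ with $F(X)\cong A$ in $\mathbf{K}^b(\mathcal{A})$; the task is to show $X$ is already isomorphic in $\mathbf{K}^b(\mathcal{A})$ to a stalk complex, i.e. to an object of $\mathcal{A}$. This is where the hypothesis that $\mathcal{A}$ has split idempotents enters: an object of $\mathbf{K}^b(\mathcal{A})$ is isomorphic to a stalk complex in degree zero exactly when its cohomology (in the sense detected by Hom-groups, or by the counting of nonvanishing $\mathrm{Hom}(-,\Sigma^n\mathcal{A})$) is concentrated in degree zero — and here split idempotents let one peel off contractible summands and realize the minimal representative as a stalk complex.

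I expect the main obstacle to be step (3), specifically the argument that $X$ with $F(X)\in\mathcal{A}$ must itself be (isomorphic to) a stalk complex. One clean route: apply the already-established part (1) to $F^{-1}$ — note $F^{-1}$ need not a priori satisfy $F^{-1}(\mathcal{A})\subseteq\mathcal{A}$, so this requires care. A more hands-on route is to use that, via the generating triangles \eqref{eqy:tri} and full faithfulness of $F$, the ``width'' of a complex (the spread of degrees in which it has nonzero homotopy classes of maps to and from $\mathcal{A}$) is reflected by $F$; since $A=F(X)$ has width zero, so does $X$, and for an additive category with split idempotents a width-zero bounded complex is homotopy equivalent to a stalk complex. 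I would prove the last implication by downward induction on the length of $X$, using the truncation triangle to split off a contractible two-term piece once the relevant differential is shown (via the Hom-vanishing, transported along $F$) to be a split mono/epi; split idempotents is exactly what guarantees the complementary summand exists. Writing this induction carefully is the technical heart of the lemma; statements (1) and (2) are essentially formal consequences of Beilinson's Lemma.
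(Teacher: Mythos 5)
Your proposal is correct and follows essentially the same route as the paper: parts (1) and (2) via Beilinson's Lemma and the vanishing of ${\rm Hom}_{\mathbf{K}^b(\mathcal{A})}(A,\Sigma^n B)$ for $n\neq 0$, and part (3) by transporting the Hom-vanishing characterization of stalk complexes along the fully faithful $F$, with split idempotents entering exactly where you place them. The only difference is that the paper simply cites the characterization (``a bounded complex is isomorphic to an object of $\mathcal{A}$ iff ${\rm Hom}(Y,\Sigma^n(A))=0={\rm Hom}(\Sigma^n(A),Y)$ for all $n\neq 0$'') as well known, whereas you sketch its proof by splitting off contractible two-term summands.
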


\begin{proof}
The ``only if" part of (1) is trivial. For the ``if" part, we observe that ${\rm Hom}_{\mathbf{K}^b(\mathcal{A})}(X, \Sigma^n(Y))=0$ for $X, Y\in \mathcal{A}$ and $n\neq 0$. Since $\mathcal{A}$ is a generating subcategory of $\mathbf{K}^b(\mathcal{A})$, we apply Lemma \ref{lem:Bei} to obtain that $F$ is fully faithful.

For (2), we observe that if $F|_\mathcal{A}$ is an equivalence, the essential image ${\rm Im}\; F$ contains $\mathcal{A}$, a generating subcategory of $\mathbf{K}^b(\mathcal{A})$. In view of the second statement of Lemma \ref{lem:Bei}, we infer that $F$ is dense.

For (3), we recall the following well-known observation: a bounded complex $Y$ is isomorphic to some object in $\mathcal{A}$ if and only if ${\rm Hom}_{\mathbf{K}^b(\mathcal{A})}(Y, \Sigma^n(A))=0={\rm Hom}_{\mathbf{K}^b(\mathcal{A})}(\Sigma^n(A), Y)$ for each $A\in \mathcal{A}$ and $n\neq 0$.

It suffices to prove that for any complex $X$, if $F(X)$ is isomorphic to some object in $\mathcal{A}$, so is $X$. For each $A\in\mathcal{A}$ and $n\neq 0$, we have
\begin{align*}
{\rm Hom}_{\mathbf{K}^b(\mathcal{A})}(X, \Sigma^n(A))&\simeq {\rm Hom}_{\mathbf{K}^b(\mathcal{A})}(F(X), F\Sigma^n(A))\\
&\simeq {\rm Hom}_{\mathbf{K}^b(\mathcal{A})}(F(X), \Sigma^n(FA))=0,
 \end{align*}
where the first isomorphism uses the fully-faithfulness of $F$ and the last equality uses the fact that $F(A)\in \mathcal{A}$. Similarly, we have ${\rm Hom}_{\mathbf{K}^b(\mathcal{A})}(\Sigma^n(A), X)=0$.  Then we are done by the above observation.
\end{proof}

The following result is analogous to \cite[Proposition 7.1]{Ric89}, where a completely different argument is used.

\begin{prop}\label{prop:K-pseudo}
Let $(F, \omega)\colon \mathbf{K}^b(\mathcal{A})\rightarrow \mathbf{K}^b(\mathcal{A})$ be a triangle autoequivalence satisfying $F(\mathcal{A})\subseteq \mathcal{A}$. Assume that the restriction $F|_\mathcal{A}$ is isomorphic to the identity functor ${\rm Id}_\mathcal{A}$.  Then for each complex $X\in \mathbf{K}^b(\mathcal{A})$, $F(X)$ is isomorphic to $X$.
\end{prop}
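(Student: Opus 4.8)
The plan is to prove the statement by induction on the width of the complex $X$, i.e.\ on the number of nonzero components. If $X$ has at most one nonzero component, then $X\cong \Sigma^n(A)$ for some $A\in\mathcal{A}$ and some $n\in\mathbb{Z}$; since $F$ is a triangle functor commuting (up to the connecting isomorphism $\omega$) with $\Sigma$, and $F|_\mathcal{A}\simeq{\rm Id}_\mathcal{A}$, we get $F(X)\cong F\Sigma^n(A)\cong \Sigma^n F(A)\cong \Sigma^n(A)\cong X$. This settles the base case.

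For the inductive step, assume the claim holds for all complexes of width strictly less than that of $X$. Let $-n$ be the smallest degree in which $X$ is nonzero, so that the brutal truncation $\sigma_{\geq 1-n}X$ has smaller width than $X$ and the stalk complex $\Sigma^{n}(X^{-n})$ has width one. Apply $F$ to the exact triangle~\eqref{eqy:tri}
\[
\Sigma^{n-1}(X^{-n})\stackrel{f}\longrightarrow \sigma_{\geq 1-n}X\stackrel{i_n}\longrightarrow \sigma_{\geq -n}X\stackrel{\pi_n}\longrightarrow \Sigma^n(X^{-n}),
\]
noting $\sigma_{\geq -n}X = X$. We obtain an exact triangle
\[
F\Sigma^{n-1}(X^{-n})\stackrel{F(f)}\longrightarrow F(\sigma_{\geq 1-n}X)\longrightarrow F(X)\longrightarrow \Sigma F\Sigma^{n-1}(X^{-n}),
\]
and using the natural isomorphisms $\omega^j$ built from $\omega$ we may rewrite the outer terms as $\Sigma^{n-1}F(X^{-n})$ and $\Sigma^{n}F(X^{-n})$. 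By the inductive hypothesis $F(\sigma_{\geq 1-n}X)\cong \sigma_{\geq 1-n}X$, and by the base case $F(X^{-n})\cong X^{-n}$ with the isomorphism induced from $F|_\mathcal{A}\simeq{\rm Id}_\mathcal{A}$. So $F(X)$ sits in an exact triangle whose two outer terms are isomorphic (respectively) to $\Sigma^{n-1}(X^{-n})$ and $\Sigma^{n}(X^{-n})$, i.e.\ to the outer terms of~\eqref{eqy:tri}. It remains to identify the connecting morphism: if the map $F(\sigma_{\geq 1-n}X)\to F(X)$ — equivalently, after the above identifications, some map $\sigma_{\geq 1-n}X\to F(X)$ — can be shown to have the ``same" third morphism $\Sigma^{n-1}(X^{-n})\to \sigma_{\geq 1-n}X$ as $f$, then the two triangles are isomorphic by the standard completion axiom (TR3), and hence $F(X)\cong \sigma_{\geq -n}X = X$.

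The main obstacle is precisely this last identification of the connecting morphism. The issue is that $F$ need not fix morphisms, only objects up to isomorphism, so the image $F(f)$ of $f\colon \Sigma^{n-1}(X^{-n})\to \sigma_{\geq 1-n}X$ a priori differs from $f$ by the (unknown) automorphisms coming from the chosen isomorphisms $F(\sigma_{\geq 1-n}X)\cong \sigma_{\geq 1-n}X$ and $F\Sigma^{n-1}(X^{-n})\cong \Sigma^{n-1}(X^{-n})$. To get around this, I would argue that it suffices to have \emph{any} map $g\colon \Sigma^{n-1}(X^{-n})\to \sigma_{\geq 1-n}X$ whose cone is isomorphic to $F(X)$; then apply the rotation of~\eqref{eqy:tri} to express $f$ in terms of the known data. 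More concretely: the triangle identifies $F(X)$ as the cone of \emph{some} morphism $g\colon \Sigma^{n-1}(X^{-n})\to \sigma_{\geq 1-n}X$ (namely $F$ applied to $f$, transported along the chosen isomorphisms), and it is enough to show $\mathrm{cone}(g)\cong\mathrm{cone}(f)$. Since ${\rm Hom}_{\mathbf{K}^b(\mathcal{A})}(\Sigma^{n-1}(X^{-n}),\sigma_{\geq 1-n}X)$ is concentrated in the relevant degree and $\sigma_{\geq 1-n}X$, being a complex supported in degrees $\geq 1-n$, has no maps from the stalk $\Sigma^{n-1}(X^{-n})$ that can fail to reconstruct $X$ — here one uses that $\sigma_{\geq 1-n}X$ is itself a brutal truncation and the only relevant component of such a $g$ in degree $1-n$ is a map $X^{-n}\to X^{1-n}$, so $\mathrm{cone}(g)$ is again a complex with components $X^{-n},X^{1-n},\ldots$ and some differential $X^{-n}\to X^{1-n}$; any such cone with $X^{-n}\neq 0$ in degree $-n$ that is the image under an equivalence $F$ of the complex $X$ whose differential is $d_X^{-n}$ must have a differential that is, up to the automorphism group of $X^{-n}$, equal to $d_X^{-n}$, hence its cone is isomorphic to $X$. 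I expect this component-chasing — keeping careful track of how $\omega$ twists the identifications — to be the technical heart of the argument, and the role of the hypothesis that $F$ is a \emph{triangle} functor (so that $\omega$ is available and natural) to be essential there.
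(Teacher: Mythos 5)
Your setup is the same as the paper's (induction on the width using the brutal truncation triangle \eqref{eqy:tri}, apply $F$, and invoke (TR3)), and you correctly locate the difficulty in identifying the connecting morphism. But the argument you give for that step has a genuine gap: your claim that any cone of a transported morphism $g\colon \Sigma^{n-1}(X^{-n})\to \sigma_{\geq 1-n}X$ ``that is the image under an equivalence $F$ of the complex $X$ \ldots must have a differential that is, up to the automorphism group of $X^{-n}$, equal to $d_X^{-n}$'' is circular. The only constraint you have on $g$ is that $\mathrm{cone}(g)\cong F(X)$, and $F(X)$ is exactly the object whose isomorphism class you are trying to determine; there is nothing forcing $\mathrm{cone}(g)\cong\mathrm{cone}(f)$ if the isomorphisms $F(\sigma_{\geq 1-n}X)\cong\sigma_{\geq 1-n}X$ and $F\Sigma^{n-1}(X^{-n})\cong\Sigma^{n-1}(X^{-n})$ are chosen arbitrarily. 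In general the Hom-space $\{u\colon X^{-n}\to X^{1-n}\mid d_X^{1-n}\circ u=0\}$ contains maps with pairwise non-isomorphic cones, so ``some map in this space whose cone is $F(X)$'' does not pin anything down.

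The missing idea is to strengthen the induction hypothesis: one should produce not just \emph{some} isomorphism $a_{n-1}\colon F(\sigma_{\geq 1-n}X)\to\sigma_{\geq 1-n}X$, but one satisfying the compatibility $\pi_{n-1}\circ a_{n-1}=\Sigma^{n-1}(\phi_{X^{1-n}})\circ\omega^{n-1}_{X^{1-n}}\circ F(\pi_{n-1})$ with the projection onto the top-degree stalk, where $\phi\colon F|_\mathcal{A}\to{\rm Id}_\mathcal{A}$ is the given isomorphism. With this extra control one computes $\pi_{n-1}\circ a_{n-1}\circ F(f)$ explicitly: since $\pi_{n-1}\circ f=-\Sigma^{n-1}(d_X^{-n})$ is (a shift of) a morphism in $\mathcal{A}$, the naturality of $\phi$ and of $\omega^{n-1}$ applied to $d_X^{-n}$ shows that this composite equals $\pi_{n-1}\circ f\circ\Sigma^{n-1}(\phi_{X^{-n}})\circ\omega^{n-1}_{X^{-n}}$. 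Because ${\rm Hom}(F\Sigma^{n-1}(X^{-n}),\sigma_{\geq 2-n}X)=0$, post-composition with $\pi_{n-1}$ is injective on the relevant Hom-space, so the square involving $F(f)$ and $f$ commutes on the nose, (TR3) applies, and the resulting $a_n$ again satisfies the strengthened compatibility, closing the induction. Without carrying this compatibility along, the step you flag as ``the technical heart'' cannot be completed.
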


\begin{proof}
Assume that  $\phi\colon F|_\mathcal{A}\rightarrow {\rm Id}_\mathcal{A}$ is the given isomorphism. Using the translation functor and the connecting isomorphism $\omega$, it suffices to prove the statement under the assumption that $X^i=0$ for $i>0$.

We claim that for each $n\geq 0$,  there is an isomorphism
$$a_n\colon F(\sigma_{\geq -n}X)\longrightarrow \sigma_{\geq -n}X$$
 satisfying $\pi_n\circ a_n=\Sigma^n(\phi_{X^{-n}})\circ \omega^n_{X^{-n}}\circ F(\pi_n)$. The claim will be proved by induction on $n$.

We take $a_0=\phi_{X^0}$. We assume that the isomorphism $a_{n-1}$ is already given for some $n\geq 1$. Consider the exact triangle (\ref{eqy:tri}).  We claim that the left square in the following diagram commutes.
\[\xymatrix{
F\Sigma^{n-1}(X^{-n}) \ar[r]^-{F(f)} \ar[d]^-{\Sigma^{n-1}(\phi_{X^{-n}})\circ \omega^{n-1}_{X^{-n}}} & F(\sigma_{\geq 1-n}X) \ar[d]^-{a_{n-1}}\ar[r]^-{F(i_n)} & F(\sigma_{\geq -n}X) \ar[rr]^-{\omega_{\Sigma^{n-1}(X^{-n})}\circ F(\pi_n)} && \Sigma F\Sigma^{n-1}(X^{-n})\ar[d]_{\Sigma^n(\phi_{X^{-n}})\circ \Sigma(\omega^{n-1}_{X^{-n}})}\\
\Sigma^{n-1}(X^{-n}) \ar[r]^-{f} & \sigma_{\geq 1-n}X \ar[r]^-{i_n} & \sigma_{\geq -n}X\ar[rr]^-{\pi_n} && \Sigma^n(X^{-n})
}\]
Indeed, the following map induced by $\pi_{n-1}\colon \sigma_{\geq 1-n}X\rightarrow \Sigma^{n-1}(X^{1-n})$ is injective
$${\rm Hom}_{\mathbf{K}^b(\mathcal{A})} (F\Sigma^{n-1}(X^{-n}), \sigma_{\geq 1-n}X)\longrightarrow {\rm Hom}_{\mathbf{K}^b(\mathcal{A})} (F\Sigma^{n-1}(X^{-n}), \Sigma^{n-1}(X^{1-n})).$$
Hence, for the claim, it suffices to prove
 $$\pi_{n-1}\circ a_{n-1}\circ F(f)= \pi_{n-1}\circ  f\circ  \Sigma^{n-1}(\phi_{X^{-n}})\circ \omega^{n-1}_{X^{-n}}.$$
By the induction hypothesis,  the first equality  in the following identity holds:
\begin{align*}
\pi_{n-1}\circ a_{n-1}\circ F(f) & = \Sigma^{n-1}(\phi_{X^{1-n}})\circ \omega^{n-1}_{X^{1-n}}\circ F(\pi_{n-1})\circ F(f)\\
&=-\Sigma^{n-1}(\phi_{X^{1-n}})\circ \omega^{n-1}_{X^{1-n}}\circ F\Sigma^{n-1}(d^{-n}_X)\\
&=-\Sigma^{n-1}(d^{-n}_X)\circ \Sigma^{n-1}(\phi_{X^{-n}})\circ \omega^{n-1}_{X^{-n}}\\
& =\pi_{n-1}\circ f\circ \Sigma^{n-1}(\phi_{X^{-n}})\circ \omega^{n-1}_{X^{-n}}.
\end{align*}
Here, the second and fourth equalities use the fact that $\pi_{n-1}\circ f=-\Sigma^{n-1}(d^{-n}_X)$, and the third uses the naturality of $\omega^{n-1}$ and $\phi$.

Thanks to the above diagram between exact triangles, the required isomorphism $a_n\colon F(\sigma_{\geq -n}X)\rightarrow \sigma_{\geq -n}X$ follows from the axiom (TR3) in the triangulated structure of $\mathbf{K}^b(\mathcal{A})$.
\end{proof}

Inspired by the above result, it seems to be of interest to have the following notion. For each $n\in \mathbb{Z}$, we denote by $\Sigma^n(\mathcal{A})$ the full subcategory of $\mathbf{K}^b(\mathcal{A})$ consisting of stalk complexes concentrated on degree $-n$. We identify  $\Sigma^0(\mathcal{A})$ with $\mathcal{A}$.

\begin{defn}
 A triangle functor $(F, \omega)\colon \mathbf{K}^b(\mathcal{A})\rightarrow \mathbf{K}^b(\mathcal{A})$ is called a \emph{pseudo-identity},  provided that $F(X)=X$ for each bounded complex $X$ and that its restriction $F|_{\Sigma^n(\mathcal{A})}$ to the subcategory $\Sigma^n(\mathcal{A})$ equals the identity functor on $\Sigma^n(\mathcal{A})$,  for each $n\in \mathbb{Z}$. \hfill $\square$
\end{defn}

The difference between a pseudo-identity and the genuine identity functor on $\mathbf{K}^b(\mathcal{A})$ lies in their action on morphisms and their connecting isomorphisms.

\begin{cor}\label{cor:K-pseudo}
Let $(F, \omega)\colon \mathbf{K}^b(\mathcal{A})\rightarrow \mathbf{K}^b(\mathcal{A})$  be a triangle functor. Then $(F, \omega)$ is isomorphic to a pseudo-identity if and only if $F$ is an autoequivalence satisfying  $F(\mathcal{A})\subseteq \mathcal{A}$  such that the restriction $F|_\mathcal{A}$ is isomorphic to the identity functor.
\end{cor}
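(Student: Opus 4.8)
The plan is to treat the two implications separately. The forward one is a short bookkeeping matter; all the content is in the converse, which I would prove by \emph{adjusting} $F$ to an isomorphic triangle functor that is literally a pseudo-identity, using Proposition~\ref{prop:K-pseudo} together with the adjustment construction behind Lemma~\ref{lem:iso-tri}.

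For the ``only if'' direction, suppose $(F,\omega)$ is isomorphic as a triangle functor to a pseudo-identity $(F',\omega')$. By definition $F'$ restricts to the identity functor on $\Sigma^0(\mathcal{A})=\mathcal{A}$, so $F'(\mathcal{A})\subseteq\mathcal{A}$ and $F'|_\mathcal{A}={\rm Id}_\mathcal{A}$ is an equivalence; hence $F'$ is an autoequivalence by Lemma~\ref{lem:K-res}(2). These properties pass to $F$ along the given isomorphism $F\simeq F'$: $F$ is an autoequivalence, $F|_\mathcal{A}\cong{\rm Id}_\mathcal{A}$, and $F(A)$ is isomorphic to $A$ for each $A\in\mathcal{A}$.

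For the ``if'' direction, fix an isomorphism $\phi\colon F|_\mathcal{A}\to{\rm Id}_\mathcal{A}$. Since $F$ is a triangle autoequivalence with $F(\mathcal{A})\subseteq\mathcal{A}$ and $F|_\mathcal{A}\cong{\rm Id}_\mathcal{A}$, Proposition~\ref{prop:K-pseudo} applies and gives that $F(X)$ is isomorphic to $X$ for \emph{every} bounded complex $X$. I then run the adjustment construction preceding Lemma~\ref{lem:iso-tri} with $F'(X):=X$ on objects, the freedom being in the choice of an isomorphism $\delta_X\colon F(X)\to X$ for each $X$. The crucial point is to make this choice compatibly with the translation on stalk complexes: for $A\in\mathcal{A}$ and $n\in\mathbb{Z}$ (the translation of $\mathbf{K}^b(\mathcal{A})$ being an automorphism, $\omega^n$ is defined for all $n$) I set
\[
\delta_{\Sigma^n(A)}:=\Sigma^n(\phi_A)\circ\omega^n_A\colon F\Sigma^n(A)\longrightarrow\Sigma^nF(A)\longrightarrow\Sigma^n(A),
\]
an isomorphism equal to $\phi_A$ when $n=0$ (the only overlap among the $\Sigma^n(\mathcal{A})$ being the zero object, where the formula reduces to the identity anyway); for every non-stalk complex I pick $\delta_X$ arbitrarily, which is possible by Proposition~\ref{prop:K-pseudo}. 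This yields a functor $F'$ with $F'(X)=X$, $F'(g)=\delta_{X'}\circ F(g)\circ\delta_X^{-1}$, and a natural isomorphism $\delta\colon F\to F'$; by Lemma~\ref{lem:iso-tri} there is a unique $\omega'$ making $(F',\omega')$ a triangle functor with $\delta\colon(F,\omega)\to(F',\omega')$ an isomorphism of triangle functors. Finally, to see that $(F',\omega')$ is a pseudo-identity one checks $F'|_{\Sigma^n(\mathcal{A})}={\rm Id}$ for every $n$: on objects this is built in, and since every morphism of $\Sigma^n(\mathcal{A})$ has the form $\Sigma^n(f)$ with $f\colon A\to B$ in $\mathcal{A}$, the naturality of $\omega^n$ and of $\phi$ at $f$ gives
\[
F'(\Sigma^n f)=\delta_{\Sigma^n(B)}\circ F\Sigma^n(f)\circ\delta_{\Sigma^n(A)}^{-1}=\Sigma^n(\phi_B)\circ\Sigma^nF(f)\circ\Sigma^n(\phi_A)^{-1}=\Sigma^n(f).
\]

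The step I expect to be the main obstacle is precisely this last verification, or rather the rigidity it demands: choosing the adjusting isomorphisms on stalk complexes arbitrarily would make $F'$ restrict to a functor merely \emph{isomorphic} to the identity on each $\Sigma^n(\mathcal{A})$, not to the identity itself. Threading the connecting isomorphism $\omega^n$ through the definition of $\delta_{\Sigma^n(A)}$ and then invoking naturality is what forces the restriction to be strictly the identity. Everything else is routine: the existence statement of Proposition~\ref{prop:K-pseudo} (genuinely needed for the non-stalk complexes) and the adjustment machinery of Lemma~\ref{lem:iso-tri}.
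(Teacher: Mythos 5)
Your proposal is correct and follows essentially the same route as the paper: the ``only if'' direction via Lemma \ref{lem:K-res}, and the ``if'' direction by using Proposition \ref{prop:K-pseudo} to choose adjusting isomorphisms $\delta_X\colon F(X)\to X$ with the specific choice $\delta_{\Sigma^n(A)}=\Sigma^n(\phi_A)\circ\omega^n_A$ on stalk complexes, then invoking Lemma \ref{lem:iso-tri}. Your explicit verification that $F'(\Sigma^n f)=\Sigma^n(f)$ (via naturality of $\omega^n$ and $\phi$) is exactly the point the paper leaves implicit, and you have correctly identified it as the step where the particular choice of $\delta_{\Sigma^n(A)}$ matters.
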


\begin{proof}
By Lemma \ref{lem:K-res}, a pseudo-identity is an autoequivalence. Then we have the ``only if" part.

For the ``if" part, we assume that $\gamma \colon F|_\mathcal{A}\rightarrow {\rm Id}_\mathcal{A}$ is the given isomorphism. We apply Proposition \ref{prop:K-pseudo} and choose for each complex $X$ an isomorphism $\delta_X\colon F(X)\rightarrow X=F'(X)$ such that for each object $A\in \mathcal{A}$ and $n\in \mathbb{Z}$, $\delta_{\Sigma^n(A)}\colon F(\Sigma^nA)\rightarrow F'(\Sigma^nA)$ equals $\Sigma^n(\gamma_A)\circ \omega^n_A$; here, we refer to Subsection 2.1 for the notation $\omega^n$. Using $\delta_X$'s as the adjusting isomorphisms and Lemma \ref{lem:iso-tri}, we obtain a pseudo-identity $(F', \omega')$ on $\mathbf{K}^b(\mathcal{A})$, which is isomorphic to $(F, \omega)$ as triangle functors.
\end{proof}

\begin{lem}\label{lem:K-pseudo}
Let $(F, \omega)\colon \mathbf{K}^b(\mathcal{A})\rightarrow \mathbf{K}^b(\mathcal{A})$   be a  pseudo-identity. Assume that $(F, \omega)$ is isomorphic to the identity functor ${\rm Id}_{\mathbf{K}^b(\mathcal{A})}$, as triangle functors. Then there is a natural isomorphism $\theta\colon (F, \omega)\rightarrow {\rm Id}_{\mathbf{K}^b(\mathcal{A})}$ of triangle functors, whose restriction to $\mathcal{A}$ is the identity transformation.
\end{lem}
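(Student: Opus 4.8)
The plan is to take the abstract isomorphism of triangle functors supplied by the hypothesis and correct it by an automorphism of ${\rm Id}_\mathcal{A}$, propagated componentwise to $\mathbf{K}^b(\mathcal{A})$, so that the corrected isomorphism restricts to the identity on $\mathcal{A}$.

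First I would invoke the hypothesis to fix an isomorphism $\eta\colon (F,\omega)\rightarrow {\rm Id}_{\mathbf{K}^b(\mathcal{A})}$ of triangle functors. Since $(F,\omega)$ is a pseudo-identity, $F|_\mathcal{A}={\rm Id}_\mathcal{A}$, so the restriction $\lambda:=\eta|_\mathcal{A}$ is a natural automorphism of ${\rm Id}_\mathcal{A}$ (an invertible element of the center $Z(\mathcal{A})$); concretely its $A$-component is $\eta_A\colon F(A)=A\rightarrow A$, which equals $\lambda_A$. Next, as recalled earlier in this section, $\mathbf{K}^b(\lambda^{-1})$ is a natural transformation $\mathbf{K}^b({\rm Id}_\mathcal{A})={\rm Id}_{\mathbf{K}^b(\mathcal{A})}\rightarrow {\rm Id}_{\mathbf{K}^b(\mathcal{A})}$ \emph{of triangle functors}: it acts on a complex $X$ componentwise by the $\lambda^{-1}_{X^n}$, it is an isomorphism because it is componentwise invertible, its compatibility with $\Sigma$ is immediate since both connecting isomorphisms in sight are trivial, and its restriction to $\mathcal{A}$ is $\lambda^{-1}$.

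Then I would set $\theta:=\mathbf{K}^b(\lambda^{-1})\circ \eta$. Composition of natural transformations of triangle functors is again one (a one-line check against the $\Sigma$-compatibility conditions), so $\theta$ is an isomorphism $(F,\omega)\rightarrow {\rm Id}_{\mathbf{K}^b(\mathcal{A})}$ of triangle functors. For $A\in\mathcal{A}$ one computes $\theta_A=\mathbf{K}^b(\lambda^{-1})_A\circ \eta_A=\lambda^{-1}_A\circ \lambda_A={\rm Id}_A$, so $\theta|_\mathcal{A}$ is the identity transformation, which is exactly the claim.

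I do not expect a genuine obstacle here: the entire content is the observation that an automorphism of ${\rm Id}_\mathcal{A}$ lifts, via the componentwise construction, to a triangle-functorial automorphism of ${\rm Id}_{\mathbf{K}^b(\mathcal{A})}$ whose restriction to $\mathcal{A}$ is the original automorphism; everything else is bookkeeping (triangle-functoriality of $\mathbf{K}^b(\lambda^{-1})$ and stability of this property under composition). Should one prefer an argument not referring to the componentwise functor $\mathbf{K}^b(-)$ on natural transformations, one could instead adjust $\eta$ on each $\Sigma^n(\mathcal{A})$ and extend using Lemma \ref{lem:twofun} and Lemma \ref{lem:iso-gen}, but the componentwise description is the cleanest route.
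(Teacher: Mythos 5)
Your proposal is correct and coincides with the paper's own proof: the paper also takes the given isomorphism $\delta$, observes that its restriction to $\mathcal{A}$ is an invertible element $\mu$ of $Z(\mathcal{A})$, and sets $\theta=\mathbf{K}^b(\mu^{-1})\circ\delta$. You have merely spelled out the routine verifications that the paper leaves implicit.
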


\begin{proof}
Take a natural isomorphism $\delta\colon (F, \omega)\rightarrow {\rm Id}_{\mathbf{K}^b(\mathcal{A})}$. The restriction of $\delta$ to $\mathcal{A}$ is an invertible element $\mu$ in $Z(\mathcal{A})$. Set $\theta=\mathbf{K}^b(\mu^{-1})\circ \delta$. Then we are done.
\end{proof}

\subsection{Pseudo-identities on bounded derived categories}

Throughout this subsection, $\mathcal{A}$ is an abelian category. We denote by $\mathbf{D}^b(\mathcal{A})$ the bounded derived category. We identify $\mathcal{A}$ as the full subcategory of $\mathbf{D}^b(\mathcal{A})$ formed by stalk complexes concentrated on degree zero.

An exact functor $G\colon \mathcal{A}\rightarrow \mathcal{B}$ between abelian categories induces a triangle functor $\mathbf{D}^b(G)\colon \mathbf{D}^b(\mathcal{A})\rightarrow \mathbf{D}^b(\mathcal{B})$, which acts componentwise on complexes and has a trivial connecting isomorphism. Similarly, a natural transformation $\mu\colon G\rightarrow G'$ between exact functors induces a natural transformation $\mathbf{D}^b(\mu)\colon \mathbf{D}^b(G)\rightarrow \mathbf{D}^b(G')$ between triangle functors.

For a bounded complex $X$ and $n\in \mathbb{Z}$, we denote by $H^n(X)$ the $n$-th cohomology.  We recall the \emph{good truncations} $\tau_{\leq n}(X)=\cdots \rightarrow  X^{n-2}\stackrel{d_X^{n-2}}\rightarrow X^{n-1} \rightarrow {\rm Ker}d_X^{n}\rightarrow 0\rightarrow \cdots$ and $\tau_{\geq n} (X)=\cdots \rightarrow 0 \rightarrow {\rm Cok}d_X^{n-1}\rightarrow X^{n+1}\stackrel{d_X^{n+1}}\rightarrow X^{n+2}\rightarrow \cdots$. This gives rise to the truncation functors $\tau_{\leq n}$ and $\tau_{\geq n}$ on $\mathbf{D}^b(\mathcal{A})$. There is a functorial isomorphism $H^n(X)\simeq \Sigma^n\tau_{\geq n}\tau_{\leq n}(X)$.

\begin{lem}\label{lem:D-res}
Let $F\colon \mathbf{D}^b(\mathcal{A})\rightarrow \mathbf{D}^b(\mathcal{A})$ be a triangle functor satisfying $F(\mathcal{A})\subseteq \mathcal{A}$.  Then the following statements hold.
 \begin{enumerate}
 \item $F$ is fully-faithful if and only if so is the restriction $F|_\mathcal{A}\colon \mathcal{A}\rightarrow \mathcal{A}$.
     \item $F$ is an equivalence if and only if so is the restriction $F|_\mathcal{A}$.
     \end{enumerate}
\end{lem}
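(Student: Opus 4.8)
The plan is to mimic the proof of Lemma \ref{lem:K-res}, replacing the brutal truncations there by the good truncations $\tau_{\leq n}$, $\tau_{\geq n}$ available in the derived setting. For statement (1), the ``only if'' part is trivial. For the ``if'' part, note that $\mathcal{A}$ is a generating subcategory of $\mathbf{D}^b(\mathcal{A})$: every bounded complex $X$ sits in an exact triangle $\tau_{\leq n-1}(X)\to \tau_{\leq n}(X)\to \Sigma^{-n}H^n(X)\to \Sigma\tau_{\leq n-1}(X)$ built from its cohomologies, so iterating shows $X$ lies in the triangulated subcategory generated by the stalk complexes $\Sigma^m(A)$, $A\in\mathcal{A}$. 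Thus by Beilinson's Lemma (Lemma \ref{lem:Bei}) it suffices to check that $F$ induces isomorphisms ${\rm Hom}(A,\Sigma^n B)\to{\rm Hom}(FA,F\Sigma^nB)={\rm Hom}(FA,\Sigma^nFB)$ for $A,B\in\mathcal{A}$ and all $n\in\mathbb{Z}$. For $n<0$ both sides vanish; for $n=0$ this is the assumed full-faithfulness of $F|_\mathcal{A}$; for $n>0$ one identifies ${\rm Hom}_{\mathbf{D}^b(\mathcal{A})}(A,\Sigma^nB)$ with ${\rm Ext}^n_\mathcal{A}(A,B)$ and argues that $F$ respects these Ext groups. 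This last point is the only substantive step: since $F$ is a triangle functor and $F|_\mathcal{A}$ is fully faithful, $F$ carries an $n$-fold extension of $A$ by $B$ — encoded as a morphism $A\to\Sigma^nB$ factored through a sequence of triangles coming from a resolution or a Yoneda representative — to the corresponding extension of $FA$ by $FB$, and the induced map on ${\rm Ext}^n$ is a bijection because at each stage the relevant ${\rm Hom}$ and ${\rm Ext}^1$ maps are isomorphisms by induction on $n$ (dimension shift). Concretely, pick a short exact sequence $0\to B\to I\to C\to 0$ in $\mathcal{A}$; it yields an exact triangle in $\mathbf{D}^b(\mathcal{A})$ whose image under $F$ is again such a triangle (as $F$ is triangulated and $F|_\mathcal{A}$ is additive), and the long exact sequences of ${\rm Hom}(A,-)$ before and after applying $F$ fit into a ladder with vertical maps that are isomorphisms in degrees $<n$ by induction; the five lemma then gives the isomorphism in degree $n$.

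For statement (2), the ``only if'' direction requires showing that if $F$ is a triangle equivalence with $F(\mathcal{A})\subseteq\mathcal{A}$, then $F|_\mathcal{A}$ is an equivalence; full-faithfulness of $F|_\mathcal{A}$ follows from part (1), so only essential surjectivity of $F|_\mathcal{A}$ needs argument. Here I would use a cohomological characterization of the stalk complexes: a bounded complex $Y$ is isomorphic in $\mathbf{D}^b(\mathcal{A})$ to an object of $\mathcal{A}$ (concentrated in degree $0$) if and only if $H^n(Y)=0$ for all $n\neq 0$, equivalently if and only if ${\rm Hom}_{\mathbf{D}^b(\mathcal{A})}(\Sigma^n A,Y)=0={\rm Hom}_{\mathbf{D}^b(\mathcal{A})}(Y,\Sigma^n A)$ for all $A\in\mathcal{A}$ and $n\neq 0$ — actually one must be slightly careful, as this vanishing only forces $Y$ to have cohomology in degree $0$; but since $F$ is essentially surjective, given $A\in\mathcal{A}$ there is $X$ with $F(X)\simeq A$, and then for every $C\in\mathcal{A}$ and $n\neq 0$ we get ${\rm Hom}(X,\Sigma^n C)\simeq{\rm Hom}(FX,\Sigma^n FC)={\rm Hom}(A,\Sigma^n FC)$, which vanishes for $n<0$ always and for $n>0$ because $A$ is an injective-free stalk — wait, that is not automatic. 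The cleaner route: the essential image of $F|_\mathcal{A}$, being a full additive subcategory closed under the relevant operations, generates $\mathbf{D}^b(\mathcal{A})$ because $F$ is dense and $\mathcal{A}$ generates the source; combined with a heart/$t$-structure argument — $F$ being a triangle equivalence with $F(\mathcal{A})\subseteq\mathcal{A}$ sends the standard $t$-structure to one whose heart contains $\mathcal{A}$, hence equals $\mathcal{A}$ — one concludes $F(\mathcal{A})=\mathcal{A}$ up to isomorphism. The ``if'' direction is the analogue of Lemma \ref{lem:K-res}(2): if $F|_\mathcal{A}$ is an equivalence then $F$ is fully faithful by part (1), and its essential image contains $\mathcal{A}$, a generating subcategory of $\mathbf{D}^b(\mathcal{A})$, so $F$ is dense by the second statement of Lemma \ref{lem:Bei}, hence an equivalence.

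I expect the main obstacle to be the careful treatment in part (2) of essential surjectivity of $F|_\mathcal{A}$ — making rigorous that a triangle autoequivalence preserving the stalk subcategory actually preserves its heart, rather than just mapping into it. The most robust argument is the $t$-structure one: $F$ transports the standard bounded $t$-structure on $\mathbf{D}^b(\mathcal{A})$ to another bounded $t$-structure whose aisle and co-aisle one computes via $F$ applied to $\mathbf{D}^{\leq 0}$ and $\mathbf{D}^{\geq 0}$; the hypothesis $F(\mathcal{A})\subseteq\mathcal{A}$ together with $F$ being triangulated forces $F(\mathbf{D}^{\leq 0})\subseteq\mathbf{D}^{\leq 0}$ and likewise on the other side (check on generators $\Sigma^n A$, using that $\Sigma^n A$ with $n\geq 0$ lies in $\mathbf{D}^{\leq 0}$ and $F(\Sigma^n A)=\Sigma^n F(A)$ with $F(A)\in\mathcal{A}$), so $F$ is $t$-exact, whence $F$ restricts to an equivalence on hearts. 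Compared with the homotopy-category case of Lemma \ref{lem:K-res}, the absence of an exactness/split-idempotents subtlety is offset by the need to invoke $t$-structures, but all the required inputs are standard and nothing beyond Beilinson's Lemma, the truncation triangles, and elementary homological algebra is needed.
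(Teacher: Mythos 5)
Your overall strategy is the paper's: Beilinson's Lemma over the generating subcategory $\mathcal{A}$ for full faithfulness and for the ``if'' direction of (2), plus a $t$-exactness argument for the essential surjectivity of $F|_\mathcal{A}$ in the ``only if'' direction of (2). That last part of your proposal is sound and is essentially the paper's own argument in different language: the paper notes that $F(\mathcal{A})\subseteq\mathcal{A}$ forces $F$ to commute with the good truncations $\tau_{\leq n}$, $\tau_{\geq n}$, hence with cohomology, giving $F|_\mathcal{A}(H^n(X))\simeq H^n(F(X))$; if $F(X)$ lies in $\mathcal{A}$ this kills $H^n(X)$ for $n\neq 0$, and density of $F$ then gives density of $F|_\mathcal{A}$. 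Your formulation ($F$ is $t$-exact, checked on the generators $\Sigma^n(A)$, and a $t$-exact equivalence carries the standard $t$-structure to a nested, hence equal, $t$-structure) carries the same content.

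The gap is in part (1), precisely at the step you single out as the only substantive one: that $F$ induces bijections ${\rm Hom}(A,\Sigma^n B)\rightarrow {\rm Hom}(FA,\Sigma^n FB)$ for $n>0$. The dimension-shift you propose does not close. In the ladder of long exact sequences attached to $0\rightarrow B\rightarrow I\rightarrow C\rightarrow 0$, applying the five lemma at the position ${\rm Ext}^n(A,B)$ requires the vertical maps at ${\rm Ext}^n(A,I)$ and ${\rm Ext}^n(A,C)$ to be under control, and these sit in degree $n$, not degree $<n$, so the induction never gets off the ground; moreover $\mathcal{A}$ is not assumed to have enough injectives, and even if $I$ were injective (so ${\rm Ext}^n(A,I)=0$ for $n\geq 1$), $F(I)$ need not be injective, so the corresponding group ${\rm Ext}^n(FA,FI)$ in the second row need not vanish and surjectivity is not obtained. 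The paper is admittedly terse here (it only says the proof is ``similar to'' Lemma \ref{lem:K-res}, whose Hom-vanishing for $n\neq 0$ is false in $\mathbf{D}^b(\mathcal{A})$), but the argument it actually has available is the Yoneda splicing formalized in Proposition \ref{prop:D-par-nat}: every class in ${\rm Hom}_{\mathbf{D}^b(\mathcal{A})}(A,\Sigma^n B)$ factors as $\Sigma^{n-1}(\xi_2)\circ\xi_1$ with $\xi_1\colon A\rightarrow\Sigma^{n-1}(C)$ and $\xi_2\colon C\rightarrow\Sigma(B)$, each degree-one class arises from the triangle of a short exact sequence in $\mathcal{A}$, and $F(\mathcal{A})\subseteq\mathcal{A}$ makes $F|_\mathcal{A}$ exact and compatible with these triangles. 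This identifies the map on ${\rm Hom}(A,\Sigma^n B)$ with the map induced by $F|_\mathcal{A}$ on Yoneda extensions; injectivity in degree one then comes from fullness (a splitting of the image extension lifts), while surjectivity is genuinely the delicate point and is immediate only when $F|_\mathcal{A}$ is an equivalence --- which it is in every application the paper makes of this lemma (Corollary \ref{cor:D-pseudo}, pseudo-identities). You should rework this step along those lines rather than invoking a five lemma that does not apply.
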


\begin{proof}
The proof is similar to the one of Lemma \ref{lem:K-res}, since $\mathcal{A}$ is also a generating subcategory of  $\mathbf{D}^b(\mathcal{A})$.  We only prove the ``only if" part of (2), that is, the denseness of $F|_\mathcal{A}$. It suffices to claim that if $F(X)$ is isomorphic to some object in $\mathcal{A}$, so is $X$.

We observe that a complex $X$ is isomorphic to some object in $\mathcal{A}$ if and only if $H^n(X)=0$ for $n\neq 0$. By the assumption that $F(\mathcal{A})\subseteq \mathcal{A}$, we infer that $F$ commutes with the truncation functors $\tau_{\leq n}$ and $\tau_{\geq n}$. Consequently, it commutes with taking cohomologies. More precisely, for each bounded complex $X$  and each $n\in \mathbb{Z}$, there is a natural isomorphism
\begin{align*}
 F|_\mathcal{A}(H^n(X))\stackrel{\sim}\longrightarrow H^n(F(X)).
\end{align*}
Since $F|_\mathcal{A}$ is fully faithful, the claim follows immediately.
\end{proof}

We have the following analogue of Proposition \ref{prop:K-pseudo}; compare \cite[Proposition 7.1]{Ric89}.

\begin{prop}\label{prop:D-pseudo}
Let $F\colon \mathbf{D}^b(\mathcal{A})\rightarrow \mathbf{D}^b(\mathcal{A})$ be a triangle autoequivalence satisfying $F(\mathcal{A})\subseteq \mathcal{A}$. Assume that the restriction $F|_\mathcal{A}$ is isomorphic to the identity functor ${\rm Id}_\mathcal{A}$.  Then for each complex $X\in \mathbf{D}^b(\mathcal{A})$, $F(X)$ is isomorphic to $X$.
\end{prop}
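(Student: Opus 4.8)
The plan is to mimic the proof of Proposition \ref{prop:K-pseudo}, building an isomorphism $F(X)\simeq X$ by induction on the cohomological width of $X$, but with the \emph{good} truncations $\tau_{\leq n}$, $\tau_{\geq n}$ replacing the brutal truncations. Using the translation functor and the connecting isomorphism $\omega$, I would first reduce to the case that $H^i(X)=0$ for $i>0$, say $X$ has cohomology concentrated in degrees $-N,\dots,0$. The base case $N=0$ is immediate: then $X$ is isomorphic to the stalk complex $H^0(X)\in\mathcal{A}$, and $F|_\mathcal{A}\simeq{\rm Id}_\mathcal{A}$ gives $F(X)\simeq H^0(X)\simeq X$.

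For the inductive step I would use the canonical exact triangle
\[
\tau_{\leq -1}(X)\longrightarrow X\longrightarrow \Sigma^{0}H^0(X)\longrightarrow \Sigma\tau_{\leq -1}(X),
\]
or, shifting, the triangle $\Sigma^{-1}(H^0(X))\to \tau_{\leq -1}(X)\to X\to H^0(X)$ where now $\tau_{\leq -1}(X)$ has strictly smaller width. By Lemma \ref{lem:D-res} the hypotheses on $F$ (fully faithful autoequivalence with $F(\mathcal{A})\subseteq\mathcal{A}$, $F|_\mathcal{A}\simeq{\rm Id}$) pass to $\tau_{\leq -1}(X)$, and $F$ commutes with the good truncation functors and with $H^n$ up to natural isomorphism, exactly as recorded in the proof of Lemma \ref{lem:D-res}. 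So applying $F$ to the triangle and using the inductive isomorphism $F(\tau_{\leq -1}X)\simeq \tau_{\leq -1}X$ together with $F(H^0(X))\simeq H^0(X)$, I would obtain two exact triangles on the same two outer terms (after correcting the third maps by $\omega$), and (TR3) plus the fully-faithfulness of $F$ would produce the desired isomorphism $F(X)\simeq X$. To make the induction actually close one wants, as in Proposition \ref{prop:K-pseudo}, not merely an abstract isomorphism but one compatible with the connecting map to $H^0(X)$ — i.e.\ an isomorphism $a_N\colon F(\tau_{\geq -N}X)\to \tau_{\geq -N}X$ making the square with the projection to $\Sigma^{?}H^{?}$ commute — so I would carry that compatibility through the induction, which is what lets the (TR3)-constructed map be chosen coherently.

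The main obstacle, compared with the homotopy-category case, is that the good truncation triangles are less rigid than the brutal ones: the morphism spaces ${\rm Hom}(\Sigma^{j}(H^i(X)),\tau_{\geq -N}X)$ need not vanish for the relevant $j$, so the uniqueness argument that pinned down the lifted morphism in Proposition \ref{prop:K-pseudo} (via injectivity of the map induced by $\pi_{n-1}$) has no literal analogue. I expect one must instead argue more carefully that the relevant square of connecting morphisms commutes — using the naturality of the isomorphism $F|_\mathcal{A}(H^n(-))\xrightarrow{\sim}H^n(F(-))$ from the proof of Lemma \ref{lem:D-res}, the naturality of $\omega^n$, and the explicit description of the good-truncation triangle — and then invoke (TR3) to get \emph{some} isomorphism, accepting non-canonicity at each stage. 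The bookkeeping with $\omega$ and with the identification $H^n(X)\simeq \Sigma^n\tau_{\geq n}\tau_{\leq n}(X)$ is routine but is where all the care goes; the structural content is just the two-out-of-three/(TR3) maneuver already used twice in the preceding results.
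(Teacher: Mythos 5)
Your overall strategy (induct on the width, apply $F$ to a truncation triangle, match the two outer terms, invoke (TR3)) is the right shape, but your decision to replace brutal truncations by good truncations opens a gap that you flag and then do not close --- and it sits at the one point where the whole proof lives. To glue the inductive isomorphism $b\colon F(\tau_{\leq -1}X)\to\tau_{\leq -1}X$ with $c\colon F(H^0(X))\to H^0(X)$ into an isomorphism $F(X)\to X$, you must first verify that the square over the connecting morphism $h\colon H^0(X)\to\Sigma\tau_{\leq -1}(X)$ commutes, i.e.\ $\Sigma(b)\circ\omega_{\tau_{\leq -1}X}\circ F(h)=h\circ c$; (TR3) produces nothing until some square between consecutive maps of the two triangles is known to commute, and since the middle map $F(X)\to X$ is precisely what you are trying to build, the square over $h$ is the only candidate. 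In the homotopy-category argument this verification is done by post-composing with the projection onto the top brutal piece and using that the induced map on Hom-groups is injective. That injectivity rests on the vanishing ${\rm Hom}(\Sigma^{n-1}(X^{-n}),\sigma_{\geq 2-n}X)=0$, which survives in $\mathbf{D}^b(\mathcal{A})$ because the source is a stalk complex in degree $1-n$ while the target is concentrated in degrees $\geq 2-n$ (standard $t$-structure). For good truncations the analogous kernel is controlled by ${\rm Hom}_{\mathbf{D}^b(\mathcal{A})}(H^0(X),\Sigma\tau_{\leq -2}X)$, which is built from higher extension groups and is nonzero in general --- exactly as you observe. Declaring that ``one must argue more carefully \ldots accepting non-canonicity at each stage'' does not substitute for an argument: without a replacement for the injectivity there is no way to establish the commuting square, and without that square the induction does not close. ($t$-exactness of $F$, i.e.\ that $F$ of the good-truncation triangle is again the truncation triangle of $F(X)$, only restates the problem: one must still show the two extension classes correspond under the specific isomorphisms $b$ and $c$ that the induction hands you.)

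The paper's proof avoids all of this by keeping the brutal truncations of Proposition \ref{prop:K-pseudo} verbatim: the entire induction, including the compatibility $\pi_n\circ a_n=\Sigma^n(\phi_{X^{-n}})\circ\omega^n_{X^{-n}}\circ F(\pi_n)$ carried along at every stage, transplants to $\mathbf{D}^b(\mathcal{A})$ word for word. The only new point is that post-composition with $\pi_{n-1}$ remains injective on ${\rm Hom}_{\mathbf{D}^b(\mathcal{A})}(F\Sigma^{n-1}(X^{-n}),\sigma_{\geq 1-n}X)$, which follows from ${\rm Hom}_{\mathbf{D}^b(\mathcal{A})}(F\Sigma^{n-1}(X^{-n}),\sigma_{\geq 2-n}X)\simeq{\rm Hom}_{\mathbf{D}^b(\mathcal{A})}(\Sigma^{n-1}(X^{-n}),\sigma_{\geq 2-n}X)=0$, using the fully-faithfulness of $F$ and the $t$-structure vanishing above. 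If you rewrite your argument with $\sigma_{\geq -n}$ in place of $\tau_{\leq n}$ and insert this one observation, it becomes the paper's proof.
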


\begin{proof}
The same argument of Proposition \ref{prop:K-pseudo} works, where we still use brutal truncations. It suffices to observe that the projection  $\pi_{n-1}\colon \sigma_{\geq 1-n}X\rightarrow \Sigma^{n-1}(X^{1-n})$ induces an injective map
$${\rm Hom}_{\mathbf{D}^b(\mathcal{A})} (F\Sigma^{n-1}(X^{-n}), \sigma_{\geq 1-n}X)\longrightarrow {\rm Hom}_{\mathbf{D}^b(\mathcal{A})} (F\Sigma^{n-1}(X^{-n}), \Sigma^{n-1}(X^{1-n})),$$
since we have
$${\rm Hom}_{\mathbf{D}^b(\mathcal{A})} (F\Sigma^{n-1}(X^{-n}), \sigma_{\geq 2-n}X)\simeq {\rm Hom}_{\mathbf{D}^b(\mathcal{A})} (\Sigma^{n-1}(X^{-n}), \sigma_{\geq 2-n}X)=0.$$
We omit the details.
\end{proof}

The following definition and corollary are analogous to the ones for the homotopy category. Recall that for each $n\in \mathbb{Z}$,  $\Sigma^n(\mathcal{A})$ denotes the full subcategory of $\mathbf{D}^b(\mathcal{A})$ consisting of stalk complexes concentrated on degree $-n$. As usual, we identify $\Sigma^0(\mathcal{A})$ with $\mathcal{A}$.

 \begin{defn}
A triangle functor $(F, \omega)\colon \mathbf{D}^b(\mathcal{A})\rightarrow \mathbf{D}^b(\mathcal{A})$ is a \emph{pseudo-identity} provided that $F(X)=X$ for each bounded complex $X$ and that its restriction $F|_{\Sigma^n(\mathcal{A})}$ to $\Sigma^n(\mathcal{A})$ equals the identity functor on $\Sigma^n(\mathcal{A})$ for each $n\in \mathbb{Z}$. \hfill $\square$
\end{defn}

The following result is a consequence of Lemma \ref{lem:D-res} and Proposition \ref{prop:D-pseudo}.

\begin{cor}\label{cor:D-pseudo}
Let $(F, \omega)\colon \mathbf{D}^b(\mathcal{A})\rightarrow \mathbf{D}^b(\mathcal{A})$  be a triangle functor. Then $(F, \omega)$ is isomorphic to a pseudo-identity if and only if $F$ is an autoequivalence satisfying  $F(\mathcal{A})\subseteq \mathcal{A}$  such that the restriction $F|_\mathcal{A}$ is isomorphic to the identity functor. \hfill $\square$
\end{cor}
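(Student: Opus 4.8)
The plan is to deduce Corollary \ref{cor:D-pseudo} directly from the two preceding results, exactly as in the homotopy-category case (Corollary \ref{cor:K-pseudo}). The ``only if'' direction is immediate: if $(F,\omega)$ is isomorphic to a pseudo-identity $(G,\gamma)$, then $G$ acts as the identity on objects and fixes each stalk complex concentrated in degree zero, so in particular $G(\mathcal{A})\subseteq\mathcal{A}$ and $G|_\mathcal{A}={\rm Id}_\mathcal{A}$; by Lemma \ref{lem:D-res}(2) the identity restriction forces $G$, hence $F$, to be an autoequivalence, and transporting the isomorphism $(F,\omega)\simeq(G,\gamma)$ along gives $F(\mathcal{A})\subseteq\mathcal{A}$ up to isomorphism and $F|_\mathcal{A}\simeq{\rm Id}_\mathcal{A}$.

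For the ``if'' direction, suppose $F$ is an autoequivalence with $F(\mathcal{A})\subseteq\mathcal{A}$ and fix an isomorphism $\gamma\colon F|_\mathcal{A}\rightarrow{\rm Id}_\mathcal{A}$. The goal is to replace $(F,\omega)$ by an isomorphic triangle functor that is literally a pseudo-identity. First I would invoke Proposition \ref{prop:D-pseudo} to get, for every bounded complex $X$, an isomorphism $F(X)\xrightarrow{\sim}X$. Using the axiom of global choice, I would then select adjusting isomorphisms $\delta_X\colon F(X)\rightarrow X$, one for each $X$, but with the constraint that on each shifted stalk complex $\Sigma^n(A)$ the choice is forced to be $\delta_{\Sigma^n(A)}=\Sigma^n(\gamma_A)\circ\omega^n_A$ (this is an isomorphism $F\Sigma^n(A)\rightarrow\Sigma^n(A)$, using the iterated connecting isomorphism $\omega^n$ from Subsection 2.1). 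By the adjusting construction recalled before Lemma \ref{lem:iso-tri}, these $\delta_X$'s turn the assignment $X\mapsto X$ into a functor $F'$ with $\delta\colon F\rightarrow F'$ a natural isomorphism; by Lemma \ref{lem:iso-tri} there is a unique $\omega'$ making $(F',\omega')$ a triangle functor with $\delta$ an isomorphism of triangle functors. It remains to check that $(F',\omega')$ is a pseudo-identity: by construction $F'(X)=X$ on objects, and on a morphism $f\colon\Sigma^n(A)\rightarrow\Sigma^n(B)$ between shifted stalk complexes one computes $F'(f)=\delta_{\Sigma^n(B)}\circ F(f)\circ\delta_{\Sigma^n(A)}^{-1}$, which by naturality of $\gamma$ and of $\omega^n$ (together with the fact that such $f$ comes from a morphism $A\rightarrow B$ in $\mathcal{A}$) collapses to $f$ itself — so $F'|_{\Sigma^n(\mathcal{A})}$ is the identity functor for each $n$.

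I expect the only subtle point, and hence the main thing to verify carefully, to be the coherence of the forced choices $\delta_{\Sigma^n(A)}=\Sigma^n(\gamma_A)\circ\omega^n_A$: one must confirm these really are mutually consistent (e.g. compatible with the identification of $\Sigma^0(\mathcal{A})$ with $\mathcal{A}$, where $\omega^0={\rm Id}_F$ gives $\delta_A=\gamma_A$) and that with this choice $F'$ restricts to the genuine identity on each $\Sigma^n(\mathcal{A})$ rather than merely to something isomorphic to it. This hinges on the naturality relations defining $\omega^n$ and on $\gamma$ being a natural transformation, precisely the same bookkeeping already carried out in the proof of Corollary \ref{cor:K-pseudo}; since Lemma \ref{lem:D-res} and Proposition \ref{prop:D-pseudo} are the exact derived-category analogues of Lemma \ref{lem:K-res} and Proposition \ref{prop:K-pseudo}, the argument transfers essentially verbatim, and the paper's remark that ``the following result is a consequence of Lemma \ref{lem:D-res} and Proposition \ref{prop:D-pseudo}'' signals that no new ideas are needed beyond this transfer.
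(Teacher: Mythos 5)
Your proof is correct and is exactly the argument the paper intends: the ``only if'' part follows from Lemma \ref{lem:D-res}(2), and the ``if'' part adjusts $F$ via Proposition \ref{prop:D-pseudo} with the forced choices $\delta_{\Sigma^n(A)}=\Sigma^n(\gamma_A)\circ\omega^n_A$, precisely mirroring the proof of Corollary \ref{cor:K-pseudo}. The paper omits the proof for this reason, so no further comparison is needed.
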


The following is analogous to Lemma \ref{lem:K-pseudo}.

\begin{lem}\label{lem:D-pseudo}
Let $(F, \omega)\colon \mathbf{D}^b(\mathcal{A})\rightarrow \mathbf{D}^b(\mathcal{A})$   be a  pseudo-identity. Assume that $(F, \omega)$ is isomorphic to the identity functor ${\rm Id}_{\mathbf{D}^b(\mathcal{A})}$, as triangle functors. Then there is a natural isomorphism $\theta\colon (F, \omega)\rightarrow {\rm Id}_{\mathbf{D}^b(\mathcal{A})}$ of triangle functors, whose restriction to $\mathcal{A}$ is the identity transformation. \hfill $\square$
\end{lem}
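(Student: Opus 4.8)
The plan is to mimic verbatim the short argument used for Lemma \ref{lem:K-pseudo}, transporting it to the derived setting; the only thing to check is that each ingredient of that argument survives the passage from $\mathbf{K}^b$ to $\mathbf{D}^b$, which it does for purely formal reasons.

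First I would fix a natural isomorphism $\delta\colon (F,\omega)\rightarrow {\rm Id}_{\mathbf{D}^b(\mathcal{A})}$ of triangle functors, which exists by hypothesis. Since $(F,\omega)$ is a pseudo-identity, we have $F(A)=A$ for every object $A$ of $\mathcal{A}$ (viewed as a stalk complex in degree zero) and $F(f)=f$ for every morphism $f$ in $\mathcal{A}$, because $F|_{\Sigma^0(\mathcal{A})}$ is the identity functor on $\mathcal{A}$. Hence the restriction of $\delta$ along the inclusion $\mathcal{A}\subseteq \mathbf{D}^b(\mathcal{A})$ assigns to each $A$ an automorphism $\delta_A\colon A\rightarrow A$, and the naturality squares of $\delta$ for morphisms of $\mathcal{A}$ read $\delta_B\circ f=f\circ\delta_A$. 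Thus $\delta|_\mathcal{A}$ is a natural transformation ${\rm Id}_\mathcal{A}\rightarrow {\rm Id}_\mathcal{A}$, that is, an element $\mu$ of the center $Z(\mathcal{A})$; since $\delta$ is an isomorphism, $\mu$ is invertible in the $k$-algebra $Z(\mathcal{A})$, with inverse $\mu^{-1}$.

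Next I would push $\mu^{-1}$ into the derived category. Regarding ${\rm Id}_\mathcal{A}$ as an exact endofunctor of $\mathcal{A}$, the natural transformation $\mu^{-1}\colon {\rm Id}_\mathcal{A}\rightarrow {\rm Id}_\mathcal{A}$ induces, as recalled in Subsection 3.2, a natural transformation of triangle functors $\mathbf{D}^b(\mu^{-1})\colon {\rm Id}_{\mathbf{D}^b(\mathcal{A})}\rightarrow {\rm Id}_{\mathbf{D}^b(\mathcal{A})}$, acting componentwise on complexes; in particular its value on a stalk complex $A$ in degree zero is $\mu^{-1}_A=\mu_A^{-1}$, and it is an isomorphism because $\mathbf{D}^b(-)$ is functorial and $\mu^{-1}$ is invertible. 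Now set
\[
\theta=\mathbf{D}^b(\mu^{-1})\circ\delta\colon (F,\omega)\longrightarrow {\rm Id}_{\mathbf{D}^b(\mathcal{A})}.
\]
As a composite of natural isomorphisms of triangle functors, $\theta$ is again a natural isomorphism of triangle functors. Finally, its restriction to $\mathcal{A}$ is the identity transformation: for each object $A$ of $\mathcal{A}$ one computes $\theta_A=\mathbf{D}^b(\mu^{-1})_A\circ\delta_A=\mu_A^{-1}\circ\mu_A={\rm Id}_A$. This is exactly the asserted conclusion.

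There is essentially no obstacle here: the statement is a formal consequence of Lemma \ref{lem:D-res} (which already guarantees a pseudo-identity is an autoequivalence), the construction of $\mathbf{D}^b(-)$ on exact functors and natural transformations, and the definition of $Z(\mathcal{A})$. The one point that must be stated with care is that a pseudo-identity acts by the literal identity — not merely up to isomorphism — on the subcategory $\mathcal{A}$, so that $\delta|_\mathcal{A}$ genuinely lands in $Z(\mathcal{A})$ rather than only in some groupoid of isomorphisms; this is precisely why the definition of pseudo-identity was set up to demand $F|_{\Sigma^n(\mathcal{A})}$ equal (not just isomorphic to) the identity. The argument is the verbatim derived analogue of the proof of Lemma \ref{lem:K-pseudo}, with $\mathbf{K}^b$ replaced by $\mathbf{D}^b$, so it may reasonably be abbreviated.
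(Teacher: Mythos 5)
Your proposal is correct and is exactly the argument the paper intends: the lemma is stated with a $\square$ because its proof is verbatim that of Lemma \ref{lem:K-pseudo} (take $\delta$, observe $\mu=\delta|_\mathcal{A}\in Z(\mathcal{A})$ is invertible, set $\theta=\mathbf{D}^b(\mu^{-1})\circ\delta$), with $\mathbf{K}^b$ replaced by $\mathbf{D}^b$. Your added remarks on why $\delta|_\mathcal{A}$ lands in $Z(\mathcal{A})$ and why $\mathbf{D}^b(\mu^{-1})$ is a transformation of triangle functors are the right points to check and are handled correctly.
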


\subsection{Comparing centers}

We will compare the triangle centers of the homotopy category and the derived category.

Let $\mathcal{P}$ be an additive category. There is a ring homomorphism
\begin{align}\label{equ:CK}
{\rm res} \colon Z_\vartriangle(\mathbf{K}^b(\mathcal{P}))\longrightarrow Z(\mathcal{P}), \; \lambda\mapsto \lambda|_\mathcal{P}
\end{align}
sending $\lambda$ to its restriction on $\mathcal{P}$. It is surjective. Indeed, there is another canonical ring homomorphism
\begin{align*}
{\rm ind}\colon Z(\mathcal{P})  \longrightarrow Z_\vartriangle(\mathbf{K}^b(\mathcal{P})), \; \mu\mapsto \mathbf{K}^b(\mu),
\end{align*}
which sends $\mu\colon {\rm Id}_\mathcal{P}\rightarrow {\rm Id}_\mathcal{P}$ to $\mathbf{K}^b(\mu)\colon {\rm Id}_{\mathbf{K}^b(\mathcal{P})}\rightarrow {\rm Id}_{\mathbf{K}^b(\mathcal{P})}$. More precisely, the action of $\mathbf{K}^b(\mu)$ on complexes  is componentwise by $\mu$. Since the composition ${\rm res}\circ {\rm ind}$ equals the identity, the homomorphism (\ref{equ:CK}) is surjective.

The following notation is needed. For a class  $S$ of objects in a triangulated category $\mathcal{T}$, we denote by $\langle S\rangle $ the smallest full additive subcategory containing $S$ and closed under taking direct summands, $\Sigma$ and $\Sigma^{-1}$. For two classes $\mathcal{X}$ and $\mathcal{Y}$ of objects, we denote by $\mathcal{X}\star \mathcal{Y}$ the class formed by those objects $Z$, which fit into an exact triangle $X\rightarrow Z\rightarrow Y\rightarrow \Sigma(X)$ for some $X\in \mathcal{X}$ and $Y\in \mathcal{Y}$. We set $\langle S\rangle _1=\langle S\rangle $ and $\langle S\rangle _{d+1}=\langle \langle S\rangle_d \star \langle S\rangle_1\rangle$ for $d\geq 1$.

The following lemma is implicit in \cite[Lemma 4.11]{Rou1}.

\begin{lem}\label{lem:Van}
Let $A\stackrel{a}\rightarrow B\stackrel{b}\rightarrow C$ be two morphisms in $\mathcal{T}$ such that ${\rm Hom}_\mathcal{T}(a, -)$ vanishes on $\mathcal{X}$ and ${\rm Hom}_\mathcal{T}(b, -)$ vanishes on $\mathcal{Y}$. Then ${\rm Hom}_\mathcal{T}(b\circ a, -)$ vanishes on $\mathcal{X} \star \mathcal{Y}$.
\end{lem}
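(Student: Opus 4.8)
The plan is to use the octahedral axiom together with the two given vanishing hypotheses. Let $Z\in\mathcal{X}\star\mathcal{Y}$, so there is an exact triangle $X\xrightarrow{u} Z\xrightarrow{v} Y\xrightarrow{w}\Sigma X$ with $X\in\mathcal{X}$ and $Y\in\mathcal{Y}$. I must show that every morphism of the form $(b\circ a)\circ s$ vanishes, where $s\colon C'\to A$ is arbitrary — actually, more precisely, that ${\rm Hom}_\mathcal{T}(b\circ a, Z)=0$, i.e. that precomposition with $b\circ a$ kills ${\rm Hom}_\mathcal{T}(C,Z)$. So take an arbitrary morphism $g\colon C\to Z$; I want $g\circ(b\circ a)=0$, that is, $(g\circ b)\circ a=0$ in ${\rm Hom}_\mathcal{T}(A,Z)$.

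First I would compose $g\circ b\colon B\to Z$ with $v\colon Z\to Y$. Since ${\rm Hom}_\mathcal{T}(b,-)$ vanishes on $\mathcal{Y}$ and $Y\in\mathcal{Y}$, we get $v\circ(g\circ b)=v\circ g\circ b=0$. By the defining property of the triangle $X\xrightarrow{u}Z\xrightarrow{v}Y\to\Sigma X$ (apply ${\rm Hom}_\mathcal{T}(B,-)$, which is a cohomological functor), the morphism $g\circ b\colon B\to Z$ factors through $u$: there exists $h\colon B\to X$ with $g\circ b = u\circ h$. Then $(g\circ b)\circ a = u\circ h\circ a = u\circ(h\circ a)$. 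Now $h\circ a\colon A\to X$ lies in ${\rm Hom}_\mathcal{T}(A,X)$, and since $a$ factors — wait, I need ${\rm Hom}_\mathcal{T}(a,-)$ to vanish on $\mathcal{X}$; but $h\circ a$ is a composite on the wrong side. Let me instead observe $h\circ a\in{\rm Hom}_\mathcal{T}(A,X)$ is precisely $({\rm Hom}_\mathcal{T}(a,X))(h)$, and since $X\in\mathcal{X}$ and ${\rm Hom}_\mathcal{T}(a,-)$ vanishes on $\mathcal{X}$, we get ${\rm Hom}_\mathcal{T}(a,X)=0$, hence $h\circ a=0$. Therefore $(g\circ b)\circ a = u\circ 0 = 0$, which is what was required.

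So the argument is a two-step factorization: use the second vanishing hypothesis to push $g\circ b$ back along the triangle to a map into $X$, then use the first vanishing hypothesis to kill the precomposition with $a$. I should be a little careful about what ``${\rm Hom}_\mathcal{T}(a,-)$ vanishes on $\mathcal{X}$'' means — it means that for every object $W\in\mathcal{X}$ the map ${\rm Hom}_\mathcal{T}(a,W)\colon{\rm Hom}_\mathcal{T}(B,W)\to{\rm Hom}_\mathcal{T}(A,W)$ is zero — and similarly for $b$; this is exactly the reading that makes the above go through. The one genuinely substantive point is the factorization of $g\circ b$ through $u$, which is the standard consequence of $v\circ g\circ b=0$ together with exactness of the triangle (equivalently, that the sequence ${\rm Hom}_\mathcal{T}(B,X)\to{\rm Hom}_\mathcal{T}(B,Z)\to{\rm Hom}_\mathcal{T}(B,Y)$ is exact); no octahedral axiom is actually needed, just the long exact sequence. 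I do not anticipate a real obstacle here; the only thing to watch is keeping the variance of the two Hom-vanishing conditions straight and confirming that both are being applied with the object $X$, respectively $Y$, coming from the triangle witnessing $Z\in\mathcal{X}\star\mathcal{Y}$.
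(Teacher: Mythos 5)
Your argument is correct and is essentially identical to the paper's proof: use the vanishing of ${\rm Hom}_\mathcal{T}(b,-)$ on $Y$ to get $v\circ g\circ b=0$, factor $g\circ b$ through $u$ via the long exact sequence of the triangle, then kill the resulting composite with $a$ using the vanishing of ${\rm Hom}_\mathcal{T}(a,-)$ on $X$. Your closing remark that only the cohomological long exact sequence (not the octahedral axiom) is needed matches what the paper actually uses.
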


\begin{proof}
Assume that $X\stackrel{u}\rightarrow Z\stackrel{v}\rightarrow Y\rightarrow \Sigma(X)$ is an exact triangle with $X\in \mathcal{X}$ and $Y\in \mathcal{Y}$. Take any morphism $f\colon C\rightarrow Z$. Then $v\circ f\circ b=0$. It follows that $f\circ b=u\circ g$ for some morphism $g\colon B\rightarrow X$. Using $g\circ a=0$, we infer that $f\circ b\circ a=0 $.
\end{proof}

The second statement of the following result is analogous to \cite[Proposition 2.9]{Lin}; compare \cite[Remark 4.12]{Rou1}.

\begin{prop}\label{prop:NK}
Keep the notation as above. Then the kernel $\mathcal{N}$ of the map {\rm res} in  (\ref{equ:CK}) lies in the Jacobson radical of $Z_\vartriangle(\mathbf{K}^b(\mathcal{P}))$.

If $\mathbf{K}^b(\mathcal{P})=\langle \mathcal{P}\rangle _{d}$ for some $d\geq 1$, we have $\mathcal{N}^d=0$.
\end{prop}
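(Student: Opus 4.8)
The plan is to prove the two assertions in order, using the surjective split ring homomorphism $\mathrm{res}$ together with the radical-type Lemma~\ref{lem:Van}.

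\emph{First I would treat the membership $\mathcal{N}\subseteq \mathrm{rad}\, Z_\vartriangle(\mathbf{K}^b(\mathcal{P}))$.} Since $Z_\vartriangle(\mathbf{K}^b(\mathcal{P}))$ is a commutative ring, it suffices to show that $1+\lambda$ is invertible for every $\lambda\in\mathcal{N}$. Fix such a $\lambda$; so $\lambda|_\mathcal{P}=0$, i.e. $\lambda_P=0$ for every stalk complex $P$ concentrated in degree $0$, hence (applying $\Sigma^n$ and using that $\lambda$ commutes with $\Sigma$) $\lambda_{\Sigma^n P}=0$ for all $n$. I claim $\lambda$ is pointwise nilpotent: for a complex $X$ with $X^i\neq0$ only for $i$ in an interval of length $\ell$, one shows $\lambda_X^{\,\ell}=0$ (indeed $\lambda^{\ell}=0$ on $\langle\mathcal{P}\rangle_\ell$; this is exactly the computation needed for the second assertion, so I would prove the general claim once). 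Granting $\lambda_X$ nilpotent for every $X$, the natural transformation $1+\lambda$ has pointwise inverse $\sum_{j\ge 0}(-\lambda)^j_X$ (a finite sum), and naturality of this pointwise inverse is automatic since each $\lambda^j$ is natural; hence $1+\lambda$ is invertible in $Z_\vartriangle(\mathbf{K}^b(\mathcal{P}))$. This gives $\mathcal{N}\subseteq\mathrm{rad}$.

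\emph{Next, the quantitative statement $\mathcal{N}^d=0$ when $\mathbf{K}^b(\mathcal{P})=\langle\mathcal{P}\rangle_d$.} Take $\lambda_1,\dots,\lambda_d\in\mathcal{N}$ and set $\mu=\lambda_d\cdots\lambda_1$; I must show $\mu_X=0$ for every object $X$, and it is enough to do this for $X\in\langle\mathcal{P}\rangle_d$. The key input is: if $\nu\in\mathcal{N}$, then for every $Y\in\langle\mathcal{P}\rangle_1=\langle\mathcal{P}\rangle$ the morphism $\nu_Y$ is zero — because $\nu_P=0$ for $P\in\mathcal{P}$, $\nu$ commutes with $\Sigma^{\pm1}$, $\nu$ is additive on direct sums, and vanishing of a natural transformation is inherited by direct summands; equivalently, $\mathrm{Hom}(\nu_Y,-)=\mathrm{Hom}(\,\cdot\,\circ\nu_Y,-)$ vanishes on all of $\mathcal{T}$, a fortiori on $\langle\mathcal{P}\rangle_1$. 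Now I run an induction on $d$ using Lemma~\ref{lem:Van}: writing the natural transformations at a fixed object $Z\in\langle\mathcal{P}\rangle_{r}\star\langle\mathcal{P}\rangle_1=\langle\mathcal{P}\rangle_{r+1}$, Lemma~\ref{lem:Van} applied to the two composable morphisms $(\lambda_r\cdots\lambda_1)_Z$ (which, by induction, kills $\mathrm{Hom}$ out of objects of $\langle\mathcal{P}\rangle_r$) and $(\lambda_{r+1})_Z$ (which kills $\mathrm{Hom}$ out of objects of $\langle\mathcal{P}\rangle_1$) shows $(\lambda_{r+1}\lambda_r\cdots\lambda_1)_Z=0$. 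More precisely, I apply Lemma~\ref{lem:Van} with $a=(\lambda_r\cdots\lambda_1)_Z\colon Z\to Z$, $b=(\lambda_{r+1})_Z\colon Z\to Z$, $\mathcal{X}=\langle\mathcal{P}\rangle_r$, $\mathcal{Y}=\langle\mathcal{P}\rangle_1$, observing that $\mathrm{Hom}(a,-)$ vanishes on $\mathcal{X}$ and $\mathrm{Hom}(b,-)$ on $\mathcal{Y}$ (since a central natural transformation vanishing at objects of a class $\mathcal{C}$ makes $\mathrm{Hom}(\cdot\circ(\text{it}),-)=0$ on $\langle\mathcal{C}\rangle$, using centrality to move the endomorphism onto the source object), and conclude $\mathrm{Hom}(b\circ a,-)=0$ on $\mathcal{X}\star\mathcal{Y}\supseteq$ the relevant objects; in particular evaluating at $Z$ and the identity gives $(ba)_Z=0$. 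After $d-1$ steps one reaches $\langle\mathcal{P}\rangle_d=\mathbf{K}^b(\mathcal{P})$ and $\mu_Z=0$ there; closure of $\langle\mathcal{P}\rangle_d$ under summands handles the remaining objects, so $\mu=0$.

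\emph{The main obstacle} I anticipate is the bookkeeping that turns "$\nu$ is central and $\nu_C=0$ for $C$ in a class $\mathcal{C}$" into "$\mathrm{Hom}_\mathcal{T}(\,\cdot\circ\nu_Y,-)$ vanishes on $\langle\mathcal{C}\rangle$ for all $Y$": one needs that $\nu$ commutes with $\Sigma^{\pm1}$ and is additive, so that $\nu$ vanishes on $\langle\mathcal{C}\rangle$ as a subcategory, and then centrality of $\nu$ lets one rewrite $g\circ\nu_Y=\nu_W\circ g$ for $g\colon Y\to W$ with $W\in\langle\mathcal{C}\rangle$, which is $0$. Once this lemma-shaped observation is isolated, the iteration via Lemma~\ref{lem:Van} is formal, and the first assertion drops out as the special case $d\to\infty$ combined with the pointwise geometric-series inverse. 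I would also remark that commutativity of $Z_\vartriangle(\mathbf{K}^b(\mathcal{P}))$ is what lets "every element of $\mathcal{N}$ is quasi-regular" upgrade to "$\mathcal{N}\subseteq\mathrm{rad}$" without a separate one-sided-ideal argument.
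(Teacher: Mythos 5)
Your proof is correct. The second assertion is handled exactly as in the paper: you show that each $\mathrm{Hom}((\lambda_i)_X,-)$ vanishes on $\langle\mathcal{P}\rangle_1$ via naturality ($f\circ(\lambda_i)_X=(\lambda_i)_Y\circ f=0$ whenever $(\lambda_i)_Y=0$), iterate Lemma~\ref{lem:Van} to get vanishing of the composite on $\langle\mathcal{P}\rangle_d=\mathbf{K}^b(\mathcal{P})$, and evaluate at the identity (Yoneda); the only step both you and the paper leave implicit is passing from vanishing on $\langle\mathcal{P}\rangle_r\star\langle\mathcal{P}\rangle_1$ to vanishing on its closure $\langle\,\cdot\,\rangle$ under sums, summands and shifts, which is routine. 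Where you genuinely diverge is the first assertion. The paper proves invertibility of $1+\lambda$ in one line from Lemma~\ref{lem:iso}: the full subcategory $\mathrm{Iso}(1+\lambda)$ is triangulated and contains the generating subcategory $\mathcal{P}$, hence is everything; this is shorter, needs no finiteness input, and is logically independent of the second assertion. You instead derive invertibility from pointwise nilpotence of $\lambda$ (every bounded complex with $\ell$ nonzero components lies in $\langle\mathcal{P}\rangle_\ell$ by the brutal-truncation triangles, so $\lambda_X^{\ell}=0$) and invert $1+\lambda$ by a finite geometric series, with naturality of the inverse automatic since the inverse of a natural isomorphism is natural. Your route costs you the full strength of the second computation up front, but it buys a slightly sharper conclusion — each element of $\mathcal{N}$ is locally nilpotent, not merely quasi-regular — and it makes the first assertion a formal consequence of the second. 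Both arguments are sound.
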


\begin{proof}
Let $\lambda\in \mathcal{N}$. Then ${\rm res}(1+\lambda)=1$. In the notation of Lemma \ref{lem:iso}, the triangulated subcategory  ${\rm Iso}(1+\lambda)$ contains $\mathcal{P}$. It forces that ${\rm Iso}(1+\lambda)=\mathbf{K}^b(\mathcal{P})$, that is, $1+\lambda$ is invertible. Consequently,  the ideal $\mathcal{N}$ lies in the Jacobson radical.

For the second statement, we take $\lambda_i\in \mathcal{N}$ for $1\leq i\leq d$. It suffices to claim that for each complex $X$, the composition
$$X\xrightarrow{(\lambda_1)_X} X \longrightarrow \cdots \longrightarrow X\xrightarrow{(\lambda_d)_X} X$$
 is zero. This sequence of morphisms induces a sequence of natural transformations between the Hom  functors on $\mathbf{K}^b(\mathcal{P})$
 $${\rm Hom}(X, -)\xrightarrow{{\rm Hom}((\lambda_1)_X, -)} {\rm Hom}(X, -) \rightarrow \cdots \rightarrow {\rm Hom}(X, -)\xrightarrow{{\rm Hom}((\lambda_d)_X, -)} {\rm Hom}(X, -).$$
We observe that each of these natural transformations vanishes on $\mathcal{P}$. Indeed, for an object $A$ in $\mathcal{P}$ and any morphism $f\colon X\rightarrow A$, $f\circ (\lambda_i)_X=(\lambda_i)_A\circ f=0$.
By Lemma~\ref{lem:Van} the composition vanishes on $\langle \mathcal{P}\rangle_d$, which is equal to $\mathbf{K}^b(\mathcal{P})$. An application of Yoneda's Lemma yields the required claim.
\end{proof}

Let $\mathcal{A}$ be an abelian category. Then there is a ring homomorphism
\begin{align}\label{equ:CD}
{\rm res}\colon Z_\vartriangle(\mathbf{D}^b(\mathcal{A}))\longrightarrow Z(\mathcal{A}), \; \lambda\mapsto \lambda|_\mathcal{A}
\end{align}
sending $\lambda$ to its restriction on $\mathcal{A}$. By a similar argument as above, there is another canonical ring homomorphism
\begin{align*}
{\rm ind}\colon Z(\mathcal{A})  \longrightarrow Z_\vartriangle(\mathbf{D}^b(\mathcal{A})), \; \mu\mapsto \mathbf{D}^b(\mu),
\end{align*}
satisfying that ${\rm res}\circ {\rm ind}$ is equal to the identity. Then the homomorphism (\ref{equ:CD}) is surjective.

The following result is proved by the same argument as Proposition \ref{prop:NK}.

\begin{prop}
Let $\mathcal{A}$ be an abelian category.  Then the kernel $\mathcal{M}$ of the map {\rm res} in (\ref{equ:CD}) lies in the Jacobson radical of $Z_\vartriangle(\mathbf{D}^b(\mathcal{A}))$.

If $\mathbf{D}^b(\mathcal{A})=\langle \mathcal{A}\rangle _{d}$ for some $d\geq 1$, we have $\mathcal{M}^d=0$. \hfill $\square$
\end{prop}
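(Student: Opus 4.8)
The plan is to transcribe, essentially verbatim, the proof of Proposition \ref{prop:NK} to the derived setting. The only inputs are that $\mathcal{A}$ is a generating subcategory of $\mathbf{D}^b(\mathcal{A})$ (as already noted in the proof of Lemma \ref{lem:D-res}), and that the ring homomorphisms ${\rm res}$ and ${\rm ind}$ in (\ref{equ:CD}) are in place with ${\rm res}\circ{\rm ind}={\rm id}$, so that $\mathcal{M}=\ker({\rm res})$ is a (two-sided) ideal of $Z_\vartriangle(\mathbf{D}^b(\mathcal{A}))$.

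For the first statement I would fix $\lambda\in\mathcal{M}$ and observe that ${\rm res}(1+\lambda)=1$, so $(1+\lambda)_A$ is invertible for every $A\in\mathcal{A}$. By Lemma \ref{lem:iso} the full subcategory ${\rm Iso}(1+\lambda)$ is a triangulated subcategory of $\mathbf{D}^b(\mathcal{A})$; it contains $\mathcal{A}$, hence equals $\mathbf{D}^b(\mathcal{A})$ because $\mathcal{A}$ is generating. Thus $1+\lambda$ is a natural isomorphism, i.e. a unit in $Z_\vartriangle(\mathbf{D}^b(\mathcal{A}))$. Since $\mathcal{M}$ is an ideal, the same applies to $1+r\lambda$ for every $r\in Z_\vartriangle(\mathbf{D}^b(\mathcal{A}))$, which is exactly the statement that $\mathcal{M}$ lies in the Jacobson radical.

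For the second statement, assume $\mathbf{D}^b(\mathcal{A})=\langle\mathcal{A}\rangle_d$ and pick $\lambda_1,\dots,\lambda_d\in\mathcal{M}$. Since multiplication in $Z_\vartriangle(\mathbf{D}^b(\mathcal{A}))$ is computed componentwise, it suffices to show that for every complex $X$ the composition $X\xrightarrow{(\lambda_1)_X}X\to\cdots\to X\xrightarrow{(\lambda_d)_X}X$ is zero. Each $(\lambda_i)_X$ induces a natural transformation ${\rm Hom}_{\mathbf{D}^b(\mathcal{A})}((\lambda_i)_X,-)$ between the corepresentable functors $\mathbf{D}^b(\mathcal{A})\to\mathrm{Ab}$, and this transformation vanishes not merely on $\mathcal{A}$ but on all of $\langle\mathcal{A}\rangle_1$: indeed $(\lambda_i)_A=0$ for $A\in\mathcal{A}$ since $\lambda_i\in\ker({\rm res})$, and as $\lambda_i$ is central one has $(\lambda_i)_{\Sigma^n A}=\Sigma^n((\lambda_i)_A)=0$, so $g\circ(\lambda_i)_X=(\lambda_i)_{\Sigma^n A}\circ g=0$ for every $g\colon X\to\Sigma^n A$, while vanishing on finite sums and on direct summands is automatic. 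Then I would apply Lemma \ref{lem:Van} inductively; since $\langle\mathcal{A}\rangle_1$ is already closed under $\Sigma^{\pm1}$ and summands, iterating $\star$ yields $\langle\mathcal{A}\rangle_d$, and one concludes that ${\rm Hom}_{\mathbf{D}^b(\mathcal{A})}((\lambda_d)_X\circ\cdots\circ(\lambda_1)_X,-)$ vanishes on $\mathbf{D}^b(\mathcal{A})=\langle\mathcal{A}\rangle_d$. Yoneda's Lemma then forces $(\lambda_d)_X\circ\cdots\circ(\lambda_1)_X=0$, hence $\mathcal{M}^d=0$.

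There is no genuine obstacle here beyond bookkeeping. The one point worth isolating is that the induced transformations ${\rm Hom}_{\mathbf{D}^b(\mathcal{A})}((\lambda_i)_X,-)$ vanish on the whole of $\langle\mathcal{A}\rangle_1$, not just on $\mathcal{A}$; this is what makes the repeated application of Lemma \ref{lem:Van} terminate precisely at $\langle\mathcal{A}\rangle_d$, and it rests on the centrality of the $\lambda_i$ together with the identity $(\lambda_i)_{\Sigma^nA}=\Sigma^n((\lambda_i)_A)$.
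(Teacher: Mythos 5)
Your proof is correct and is essentially the paper's own argument: the paper simply states that this proposition ``is proved by the same argument as Proposition \ref{prop:NK}'', which is exactly what you transcribe. Your explicit remark that centrality gives $(\lambda_i)_{\Sigma^n A}=\Sigma^n((\lambda_i)_A)=0$, so that vanishing holds on all of $\langle\mathcal{A}\rangle_1$ before iterating Lemma \ref{lem:Van}, is a point the paper leaves implicit, but it is the same proof.
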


Let $\mathcal{A}$ be an abelian category with enough projective objects. Denote by $\mathcal{P}$ the full subcategory formed by projective objects. We view $\mathbf{K}^b(\mathcal{P})$ as a triangulated  subcategory of $\mathbf{D}^b(\mathcal{A})$.

We consider the following commutative diagram of ring homomorphisms, where ``${\rm res}$" denotes the corresponding restriction of natural transformations
\begin{align}\label{equ:center}
\xymatrix{
Z_\vartriangle(\mathbf{D}^b(\mathcal{A}))  \ar[rr]^-{\stackrel{\rm res}{\sim}} \ar[d]_-{\rm res} && Z_\vartriangle(\mathbf{K}^b(\mathcal{P}))\ar[d]^-{\rm res} \\
Z(\mathcal{A})\ar[rr]^-{\stackrel{\rm res}{\sim}}  && Z(\mathcal{P}).
}
\end{align}
It is well known that the lower row map is an isomorphism. By \cite[Theorem 2.5]{KY} the upper one is also an isomorphism. Consequently, we may identify the kernels of the two vertical homomorphisms.

\section{$\mathbf{K}$-standard additive categories}

In this section, we introduce the notions of a $\mathbf{K}$-standard additive category and a strongly $\mathbf{K}$-standard additive category.

Let $k$ be a commutative ring. We will  assume that all functors and categories are $k$-linear. Throughout, $\mathcal{A}$ is a $k$-linear additive category, which is always assumed to be skeletally small.

\begin{defn}\label{defn:1}
 The category $\mathcal{A}$ is  said to be \emph{$\mathbf{K}$-standard} (over $k$),  provided that the following holds: given any $k$-linear triangle autoequivalence $(F, \omega)\colon \mathbf{K}^b(\mathcal{A})\rightarrow \mathbf{K}^b(\mathcal{A})$  satisfying $F(\mathcal{A})\subseteq \mathcal{A}$ and any natural isomorphism $\theta_0\colon F|_\mathcal{A}\rightarrow {\rm Id}_\mathcal{A}$, there is a natural transformation $\theta\colon (F, \omega)\rightarrow {\rm Id}_{\mathbf{K}^b(\mathcal{A})}$ of triangle functors extending $\theta_0$.

The category $\mathcal{A}$ is said to be \emph{strongly $\mathbf{K}$-standard} (over $k$), if furthermore the above extension $\theta$ is always unique. \hfill $\square$
\end{defn}

We observe that the above extension $\theta$ is necessarily an isomorphism. Indeed, in the notation of Lemma \ref{lem:iso}, the triangulated subcategory ${\rm Iso}(\theta)$ contains $\mathcal{A}$. Then we have ${\rm Iso}(\theta)=\mathbf{K}^b(\mathcal{A})$.

\begin{lem}\label{lem:K-stan}
Let $\mathcal{A}$ be as above. Then the following statements are equivalent.
\begin{enumerate}
\item  The category $\mathcal{A}$ is $\mathbf{K}$-standard.
 \item For any $k$-linear pseudo-identity $(F, \omega)$ on $\mathbf{K}^b(\mathcal{A})$, there is a natural isomorphism $\eta\colon (F, \omega)\rightarrow {\rm Id}_{\mathbf{K}^b(\mathcal{A})}$ of triangle functors such that $\eta|_\mathcal{A}$ is the identity.
     \item Any $k$-linear pseudo-identity $(F, \omega)$ on $\mathbf{K}^b(\mathcal{A})$ is isomorphic to ${\rm Id}_{\mathbf{K}^b(\mathcal{A})}$, as triangle functors.
         \end{enumerate}
\end{lem}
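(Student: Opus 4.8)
The plan is to establish the cycle of implications $(1)\Rightarrow(2)\Rightarrow(3)\Rightarrow(1)$, using the structural results about pseudo-identities proved earlier (Corollary \ref{cor:K-pseudo} and Lemma \ref{lem:K-pseudo}) together with the observation that any extension as in Definition \ref{defn:1} is automatically an isomorphism.

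For $(1)\Rightarrow(2)$: let $(F,\omega)$ be a $k$-linear pseudo-identity. By definition it satisfies $F(\mathcal{A})\subseteq\mathcal{A}$ and $F|_\mathcal{A}={\rm Id}_\mathcal{A}$; in particular, by Lemma \ref{lem:K-res}(2) (or Corollary \ref{cor:K-pseudo}) it is a triangle autoequivalence. Apply the $\mathbf{K}$-standard property with $\theta_0={\rm Id}_{\mathcal{A}}\colon F|_\mathcal{A}\to{\rm Id}_\mathcal{A}$ to obtain $\theta\colon(F,\omega)\to{\rm Id}_{\mathbf{K}^b(\mathcal{A})}$ extending $\theta_0$, and recall from the remark after Definition \ref{defn:1} that $\theta$ is necessarily a natural isomorphism. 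This is exactly the required $\eta$. The implication $(2)\Rightarrow(3)$ is immediate, since $(2)$ produces a natural isomorphism of triangle functors $(F,\omega)\to{\rm Id}_{\mathbf{K}^b(\mathcal{A})}$, which is all that $(3)$ asserts.

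For $(3)\Rightarrow(1)$: suppose $(F,\omega)\colon\mathbf{K}^b(\mathcal{A})\to\mathbf{K}^b(\mathcal{A})$ is a $k$-linear triangle autoequivalence with $F(\mathcal{A})\subseteq\mathcal{A}$ and $\theta_0\colon F|_\mathcal{A}\to{\rm Id}_\mathcal{A}$ a natural isomorphism. By Corollary \ref{cor:K-pseudo}, $(F,\omega)$ is isomorphic, as a triangle functor, to a pseudo-identity $(F',\omega')$; moreover, inspecting the proof of that corollary, the chosen isomorphism $\delta\colon(F,\omega)\to(F',\omega')$ can be arranged so that $\delta|_\mathcal{A}=\theta_0$ (one uses $\theta_0$ itself in the role of the isomorphism $\gamma$ appearing there, so that $\delta_A=\Sigma^0(\theta_0)_A\circ\omega^0_A=(\theta_0)_A$ for $A\in\mathcal{A}$). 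By hypothesis $(3)$ there is a natural isomorphism $(F',\omega')\to{\rm Id}_{\mathbf{K}^b(\mathcal{A})}$ of triangle functors; applying Lemma \ref{lem:K-pseudo} (whose hypotheses are now met) we may choose such an isomorphism $\theta'$ whose restriction to $\mathcal{A}$ is the identity transformation. Then $\theta:=\theta'\circ\delta\colon(F,\omega)\to{\rm Id}_{\mathbf{K}^b(\mathcal{A})}$ is a natural transformation of triangle functors with $\theta|_\mathcal{A}=({\rm Id})\circ\theta_0=\theta_0$, which proves $(1)$.

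The only delicate point—and the step I would double-check most carefully—is the refinement in $(3)\Rightarrow(1)$ that the isomorphism $\delta$ furnished by Corollary \ref{cor:K-pseudo} restricts to the \emph{prescribed} $\theta_0$ on $\mathcal{A}$, rather than to some arbitrary isomorphism; this is what lets us extend the given $\theta_0$ rather than merely some natural isomorphism. Everything else is bookkeeping with compositions of natural transformations of triangle functors and the already-proved facts that such extensions, once they exist, are automatically isomorphisms. Note that uniqueness of $\theta$ (hence strong $\mathbf{K}$-standardness) is not claimed in this lemma, so no further argument about $Z_\vartriangle(\mathbf{K}^b(\mathcal{A}))$ is needed here.
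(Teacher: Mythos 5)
Your proposal is correct and follows essentially the same route as the paper: the paper also reduces everything to Corollary \ref{cor:K-pseudo} (arranging the adjusting isomorphism to restrict to $\theta_0$ on $\mathcal{A}$, exactly the ``delicate point'' you flag) and to Lemma \ref{lem:K-pseudo}, merely organizing the cycle as $(3)\Rightarrow(2)\Rightarrow(1)$ instead of your direct $(3)\Rightarrow(1)$. No gaps.
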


\begin{proof}
The implications $(1)\Rightarrow (2)$ and  $(2)\Rightarrow (3)$ are clear. By Lemma \ref{lem:K-pseudo}, we have $(3)\Rightarrow (2)$.

  For $(2)\Rightarrow (1)$, let $(F, \omega)$ and $\theta_0$ be as in Definition \ref{defn:1}. By Corollary \ref{cor:K-pseudo} and its proof, there is a pseudo-identity $(F', \omega')$ on $\mathbf{K}^b(\mathcal{A})$ with a natural isomorphism $\theta'\colon (F, \omega)\rightarrow (F', \omega')$ such that $\theta'|_\mathcal{A}=\theta_0$. By assumption, there is an isomorphism $\eta\colon (F', \omega')\rightarrow {\rm Id}_{\mathbf{K}^b(\mathcal{A})}$ with $\eta|_\mathcal{A}$ the identity transformation. Take $\theta=\eta\circ \theta'$. Then we are done.
\end{proof}

The centers of the homotopy category and the underlying additive category  play a role for strongly $\mathbf{K}$-standard categories.

\begin{lem}\label{lem:stan-center}
Let $\mathcal{A}$ be a $k$-linear additive category. Then it is strongly $\mathbf{K}$-standard if and only if it is $\mathbf{K}$-standard and the homomorphism {\rm res} in (\ref{equ:CK}) for $\mathcal{A}$ is an isomorphism.
\end{lem}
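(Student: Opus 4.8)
The plan is to prove both implications of the claimed equivalence between strong $\mathbf{K}$-standardness and the conjunction of $\mathbf{K}$-standardness with injectivity (hence bijectivity, since it is always surjective) of ${\rm res}\colon Z_\vartriangle(\mathbf{K}^b(\mathcal{A}))\to Z(\mathcal{A})$.

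\textbf{The ``if'' direction.} Assume $\mathcal{A}$ is $\mathbf{K}$-standard and ${\rm res}$ is an isomorphism. Given $(F,\omega)$ and $\theta_0\colon F|_\mathcal{A}\to{\rm Id}_\mathcal{A}$ as in Definition \ref{defn:1}, $\mathbf{K}$-standardness already supplies an extension $\theta$; I must show it is unique. If $\theta,\theta'$ both extend $\theta_0$, then $\theta'\circ\theta^{-1}$ (using that both are isomorphisms, as observed after Definition \ref{defn:1}) is a natural automorphism of $({\rm Id}_{\mathbf{K}^b(\mathcal{A})},\ast)$ restricting to the identity on $\mathcal{A}$. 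By Lemma \ref{lem:nat} applied to the identity autoequivalence, such a natural transformation is of the form ${\rm Id}\cdot\lambda$ for a unique $\lambda\in Z_\vartriangle(\mathbf{K}^b(\mathcal{A}))$; restricting to $\mathcal{A}$ gives ${\rm res}(\lambda)={\rm Id}_{{\rm Id}_\mathcal{A}}$, so by injectivity $\lambda$ is the identity element, whence $\theta'=\theta$. (One small point: Lemma \ref{lem:nat} is stated for a triangle autoequivalence $(F,\omega)\to(F,\omega)$ of the \emph{same} triangle functor; here $\theta'\circ\theta^{-1}$ is a self-transformation of $({\rm Id},\ast)$ as a triangle functor, so it applies directly.)

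\textbf{The ``only if'' direction.} Assume $\mathcal{A}$ is strongly $\mathbf{K}$-standard. Then in particular it is $\mathbf{K}$-standard, so it remains to show ${\rm res}$ is injective. Since ${\rm res}\circ{\rm ind}={\rm Id}_{Z(\mathcal{A})}$, it suffices to show $\mathcal{N}:=\ker({\rm res})=0$. Let $\lambda\in\mathcal{N}$. By Proposition \ref{prop:NK}, $\lambda$ lies in the Jacobson radical, so $1+\lambda$ is invertible in $Z_\vartriangle(\mathbf{K}^b(\mathcal{A}))$; viewing $1+\lambda$ as a natural transformation $\mathrm{Id}_{\mathbf{K}^b(\mathcal{A})}\to\mathrm{Id}_{\mathbf{K}^b(\mathcal{A})}$ of triangle functors, it is therefore a natural \emph{iso}morphism, and its restriction to $\mathcal{A}$ is ${\rm res}(1+\lambda)={\rm Id}_\mathcal{A}$. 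Now apply strong $\mathbf{K}$-standardness with $(F,\omega)={\rm Id}_{\mathbf{K}^b(\mathcal{A})}$ and $\theta_0={\rm Id}_{{\rm Id}_\mathcal{A}}$: both the identity transformation and $1+\lambda$ are natural transformations ${\rm Id}_{\mathbf{K}^b(\mathcal{A})}\to{\rm Id}_{\mathbf{K}^b(\mathcal{A})}$ of triangle functors extending $\theta_0$, so by the uniqueness clause they coincide, i.e.\ $\lambda=0$. Hence $\mathcal{N}=0$ and ${\rm res}$ is an isomorphism.

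\textbf{Main obstacle.} The only genuinely delicate point is bookkeeping about \emph{which} category of natural transformations we are working in: throughout we must consistently interpret elements of $Z_\vartriangle$ as self-transformations of the identity \emph{triangle} functor (with its trivial connecting isomorphism) rather than of the underlying additive functor, so that Lemma \ref{lem:nat} and the identification $1+\lambda$ is a triangle-functor transformation are legitimate. Once that is fixed, everything reduces to: $\mathbf{K}$-standardness gives existence; uniqueness of the extension is controlled precisely by the kernel of ${\rm res}$ via the group of triangle-central elements restricting to $1$ on $\mathcal{A}$; and the Jacobson-radical statement of Proposition \ref{prop:NK} is what converts a kernel element into an \emph{invertible} (hence admissible) competing extension. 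No nontrivial computation is required.
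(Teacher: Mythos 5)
Your proof is correct and follows essentially the same route as the paper's: uniqueness of extensions applied to the identity pseudo-identity kills the kernel of ${\rm res}$, and conversely injectivity of ${\rm res}$ forces any two extensions $\theta,\theta'$ to agree via the triangle-central element $\theta\circ\theta'^{-1}$. The only (harmless) difference is that your appeals to Lemma \ref{lem:nat} and to Proposition \ref{prop:NK} are superfluous: a self-transformation of the identity triangle functor is by definition an element of $Z_\vartriangle(\mathbf{K}^b(\mathcal{A}))$, and the uniqueness clause of Definition \ref{defn:1} applies to any extending natural transformation, so one need not first check that $1+\lambda$ is invertible.
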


\begin{proof}
For the ``only if" part, it suffices to show that the homomorphism (\ref{equ:CK}) is injective, since we observe that in Section 3 it is always surjective. We claim  that each $\lambda$ in the kernel of (\ref{equ:CK}) is zero. Indeed, both $1+\lambda$ and $1$ are extensions of  the identity transformation $({\rm Id}_{\mathbf{K}^b(\mathcal{A})})|_\mathcal{A}={\rm Id}_\mathcal{A}\rightarrow {\rm Id}_\mathcal{A}$. By the uniqueness of the extensions,  we infer that $1+\lambda=1$.

For the ``if" part, we take two extensions $\theta, \theta'\colon (F, \omega)\rightarrow {\rm Id}_{\mathbf{K}^b(\mathcal{A})}$ of the given isomorphism $\theta_0\colon F|_\mathcal{A}\rightarrow {\rm Id}_\mathcal{A}$. As mentioned above, both $\theta$ and $\theta'$ are isomorphisms. Then $\theta\circ \theta'^{-1}$ lies in $Z_\vartriangle(\mathbf{K}^b(\mathcal{A}))$, whose restriction to $\mathcal{A}$ is the identity transformation. Since the homomorphism (\ref{equ:CK}) is injective, we infer that $\theta\circ \theta'^{-1}$ is equal to the identity and thus $\theta=\theta'$.
\end{proof}

We have the following basic properties of a $\mathbf{K}$-standard additive category.

\begin{lem}\label{lem:Kdef}
Let $\mathcal{A}$ be a $\mathbf{K}$-standard additive category. Then the following statements hold.
\begin{enumerate}
\item Let $(F, \omega)\colon \mathbf{K}^b(\mathcal{A})\rightarrow \mathbf{K}^b(\mathcal{A})$ be a triangle autoequivalence with $F(\mathcal{A})\subseteq \mathcal{A}$. If $\mathcal{A}$ has split idempotents, then there is an isomorphism $(F, \omega)\stackrel{\sim}\longrightarrow \mathbf{K}^b(F|_\mathcal{A})$ of triangle functors.
    \item Assume further that $\mathcal{A}$ is strongly $\mathbf{K}$-standard. Let $F_1, F_2\colon \mathcal{A}\rightarrow \mathcal{A}$ be two autoequivalences, which are isomorphic. Then any natural transformation $\mathbf{K}^b(F_1)\rightarrow \mathbf{K}^b(F_2)$ of triangle functors is of the form $\mathbf{K}^b(\eta)$ for a unique natural transformation $\eta\colon F_1\rightarrow F_2$.
    \end{enumerate}
    \end{lem}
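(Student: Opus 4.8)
### Proof proposal for Lemma \ref{lem:Kdef}

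\textbf{Overall strategy.} Both statements are deduced from the characterizations in Lemma \ref{lem:K-stan} together with the adjustment machinery (Lemma \ref{lem:iso-tri}) and the relation between natural transformations of triangle functors built over $\mathbf{K}^b(-)$ and those of the underlying additive functors. The plan is to reduce everything to pseudo-identities and then invoke $\mathbf{K}$-standardness (or strong $\mathbf{K}$-standardness) directly.

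\textbf{Proof of (1).} Set $G = F|_\mathcal{A}\colon \mathcal{A}\to\mathcal{A}$. Since $\mathcal{A}$ has split idempotents, Lemma \ref{lem:K-res}(3) gives that $G$ is an equivalence on $\mathcal{A}$. First I would replace $F$ up to isomorphism of triangle functors by $\mathbf{K}^b(G)\circ F'$, where $F'$ is a triangle autoequivalence of $\mathbf{K}^b(\mathcal{A})$ whose restriction to $\mathcal{A}$ is isomorphic to ${\rm Id}_\mathcal{A}$: concretely, pick a quasi-inverse $H$ of $G$ and a natural isomorphism $HG\to{\rm Id}_\mathcal{A}$, form the composite triangle functor $\mathbf{K}^b(H)\circ(F,\omega)$, observe its restriction to $\mathcal{A}$ is isomorphic to ${\rm Id}_\mathcal{A}$, so by Corollary \ref{cor:K-pseudo} it is isomorphic to a pseudo-identity $(F',\omega')$. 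Now $\mathbf{K}$-standardness, via Lemma \ref{lem:K-stan}(3), gives $(F',\omega')\simeq{\rm Id}_{\mathbf{K}^b(\mathcal{A})}$ as triangle functors. Composing back with $\mathbf{K}^b(G)$ yields $(F,\omega)\simeq\mathbf{K}^b(H)^{-1}\simeq\mathbf{K}^b(G)=\mathbf{K}^b(F|_\mathcal{A})$ as triangle functors, using that $\mathbf{K}^b(-)$ sends the quasi-inverse pair $(G,H)$ to a quasi-inverse pair of triangle functors.

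\textbf{Proof of (2).} Let $\psi\colon\mathbf{K}^b(F_1)\to\mathbf{K}^b(F_2)$ be a natural transformation of triangle functors. Fix a natural isomorphism $\alpha\colon F_1\to F_2$; then $\mathbf{K}^b(\alpha)$ is a natural isomorphism of triangle functors, so $\mathbf{K}^b(\alpha)^{-1}\circ\psi$ is a natural transformation $\mathbf{K}^b(F_1)\to\mathbf{K}^b(F_1)$ of triangle functors. Composing with $\mathbf{K}^b(F_1)^{-1}$ (a quasi-inverse triangle functor, using that $F_1$ is an equivalence so $\mathbf{K}^b(F_1)$ is too), I obtain a natural transformation $\tau\colon{\rm Id}_{\mathbf{K}^b(\mathcal{A})}\to{\rm Id}_{\mathbf{K}^b(\mathcal{A})}$ of triangle functors, i.e. $\tau\in Z_\vartriangle(\mathbf{K}^b(\mathcal{A}))$. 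By Lemma \ref{lem:stan-center}, strong $\mathbf{K}$-standardness forces the restriction map ${\rm res}$ of (\ref{equ:CK}) to be an isomorphism, so $\tau={\rm ind}(\tau|_\mathcal{A})=\mathbf{K}^b(\mu)$ for $\mu=\tau|_\mathcal{A}\in Z(\mathcal{A})$. Tracking back through the two compositions, and using that ${\rm ind}$ is multiplicative and compatible with whiskering by $\mathbf{K}^b(F_i)$ and $\mathbf{K}^b(\alpha)$, one gets $\psi=\mathbf{K}^b(\eta)$ with $\eta=F_2(\mu)\circ\alpha$ (or the analogous whiskered expression), a natural transformation $F_1\to F_2$. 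Uniqueness of $\eta$ is immediate since $\mathbf{K}^b(-)$ is faithful on natural transformations, its restriction to $\mathcal{A}$ recovering $\eta$.

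\textbf{Main obstacle.} The routine but delicate point is the bookkeeping with connecting isomorphisms: whiskering triangle functors built via $\mathbf{K}^b(-)$ (which have trivial connecting isomorphism) by a genuine triangle autoequivalence $(F,\omega)$ with nontrivial $\omega$, and checking that the natural transformations produced genuinely respect the triangle structure, i.e. satisfy the compatibility $\omega'\circ\eta\Sigma=\Sigma\eta\circ\omega$. In (1) this is handled because pseudo-identities and Corollary \ref{cor:K-pseudo} already package it; in (2) the functors $\mathbf{K}^b(F_i)$ have trivial connecting isomorphisms, so the whiskering is clean and the only real input is the isomorphism ${\rm res}\colon Z_\vartriangle(\mathbf{K}^b(\mathcal{A}))\xrightarrow{\sim} Z(\mathcal{A})$ from Lemma \ref{lem:stan-center}. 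So the true content of both parts is entirely concentrated in Lemma \ref{lem:K-stan} and Lemma \ref{lem:stan-center}, and the proof is essentially a formal manipulation once those are invoked.
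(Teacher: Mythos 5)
Your proposal is correct and follows essentially the same route as the paper: part (1) is the paper's argument (compose with $\mathbf{K}^b(G)$ for $G$ a quasi-inverse of $F|_{\mathcal{A}}$, reduce to a pseudo-identity, and invoke $\mathbf{K}$-standardness via Lemma \ref{lem:K-stan}), and part (2) likewise reduces to an element of $Z_\vartriangle(\mathbf{K}^b(\mathcal{A}))$ (the paper does this via Lemma \ref{lem:nat}, which is exactly your ``compose with a quasi-inverse'' step) and then uses the injectivity of the restriction map from Lemma \ref{lem:stan-center}. The only cosmetic difference is that the paper defines $\eta=\theta|_{\mathcal{A}}$ up front and compares $\theta$ with $\mathbf{K}^b(\eta)$ through two central elements, whereas you reconstruct $\eta$ from the central element at the end; the two are equivalent.
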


\begin{proof}
(1) We have observed in Lemma \ref{lem:K-res}(3) that $F|_\mathcal{A}\colon \mathcal{A}\rightarrow \mathcal{A}$ is an autoequivalence. We fix its quasi-inverse $G$. Consider the triangle autoequivalence $\mathbf{K}^b(G)F$, whose restriction to $\mathcal{A}$ is isomorphic to the identity functor. By the $\mathbf{K}$-standard property, we infer that $\mathbf{K}^b(G)F$ is isomorphic to the identity functor. Consequently, we have that $F$ is isomorphic to $\mathbf{K}^b(F|_\mathcal{A})$.

(2) We fix a natural isomorphism $\delta\colon F_2\rightarrow F_1$. Take any natural transformation $\theta\colon \mathbf{K}^b(F_1)\rightarrow \mathbf{K}^b(F_2)$ of triangle functors and set $\eta=\theta|_\mathcal{A}$ to be its restriction to $\mathcal{A}$. By Lemma \ref{lem:nat} there are $\gamma, \gamma'\in Z_\vartriangle(\mathbf{K}^b(\mathcal{A}))$ satisfying $\mathbf{K}^b(F_1)\gamma=\mathbf{K}^b(\delta)\circ \theta$ and $\mathbf{K}^b(F_1)\gamma'=\mathbf{K}^b(\delta)\circ \mathbf{K}^b(\eta)$. We observe that the restrictions of $\gamma$ and $\gamma'$ to $\mathcal{A}$ coincide. Lemma \ref{lem:stan-center} implies that the homomorphism (\ref{equ:CK}) is injective. It follows that $\gamma=\gamma'$, which proves that $\theta=\mathbf{K}^b(\eta)$.
\end{proof}

An additive category $\mathcal{A}$ is \emph{split} provided that it has split idempotents and every morphism $f\colon X\rightarrow Y$ admits a factorization $f=v\circ u$ with $u$ a retraction and $v$ a section.

The following observation provides a trivial example for strongly $\mathbf{K}$-standard categories.

\begin{lem}
Let $\mathcal{A}$ be a split category. Then $\mathcal{A}$ is strongly $\mathbf{K}$-standard.
\end{lem}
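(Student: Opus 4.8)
The key point is that in a split category $\mathcal{A}$, every bounded complex is isomorphic, in $\mathbf{K}^b(\mathcal{A})$, to a complex with zero differentials — i.e.\ to a finite direct sum of stalk complexes $\Sigma^n(A)$. Granting this, a triangle autoequivalence $(F,\omega)$ on $\mathbf{K}^b(\mathcal{A})$ with $F(\mathcal{A})\subseteq \mathcal{A}$ is essentially pinned down by its restriction $F|_\mathcal{A}$ together with the connecting data $\omega$, and a natural isomorphism $\theta_0\colon F|_\mathcal{A}\rightarrow {\rm Id}_\mathcal{A}$ has a unique extension to a natural transformation of triangle functors. So I would aim to verify Definition \ref{defn:1} directly, and then uniqueness, rather than going through pseudo-identities.

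First I would prove the structural lemma: if $\mathcal{A}$ is split, then every bounded complex $X$ is isomorphic in $\mathbf{K}^b(\mathcal{A})$ to $\bigoplus_n \Sigma^n(H^n)$ for suitable objects $H^n$ (the ``homology'' objects, which exist since kernels of split epis are direct summands). This is the standard argument that over a semisimple-like situation complexes split; concretely, factor each differential $d_X^n = v_n\circ u_n$ with $u_n$ a retraction and $v_n$ a section, use split idempotents to get $X^n \cong {\rm im}\,d_X^{n-1}\oplus H^n \oplus C^n$, and check that the contractible summands can be cancelled up to homotopy by an explicit homotopy built from the sections and retractions. Consequently $\mathcal{A}$, together with all its shifts, linearly spans $\mathbf{K}^b(\mathcal{A})$ — more precisely, the stalk complexes and the obvious maps between them do — which is exactly the setting of Lemma \ref{lem:twofun} and Lemma \ref{lem:iso-gen}.

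Next, for $\mathbf{K}$-standardness: given $(F,\omega)$ and $\theta_0$ as in Definition \ref{defn:1}, define on each shifted stalk $\Sigma^n(A)$ the morphism $\theta_{\Sigma^n(A)} = \Sigma^n(\theta_{0,A})\circ \omega^n_A \colon F\Sigma^n(A)\rightarrow \Sigma^n(A)$ (using $F(\Sigma^n A)\cong \Sigma^n F(A)$ via $\omega^n$, and $F(A)\in\mathcal{A}$), exactly as in the proof of Corollary \ref{cor:K-pseudo}. By the structural lemma every object of $\mathbf{K}^b(\mathcal{A})$ is a finite direct sum of such stalks, so Lemma \ref{lem:twofun} extends this assignment uniquely to a natural transformation $\theta\colon F\rightarrow {\rm Id}$; that it is compatible with the connecting isomorphisms (i.e.\ a map of triangle functors) is checked on the generating stalk complexes, where it holds by construction from the defining cocycle identity $\omega^{n+1}=\Sigma\omega^n\circ\omega\Sigma^n$. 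Since $\theta$ restricted to $\mathcal{A}$ is $\theta_0$, this proves $\mathbf{K}$-standardness; and ${\rm Iso}(\theta)\supseteq\mathcal{A}$ forces $\theta$ to be an isomorphism by Lemma \ref{lem:iso}. For the ``strongly'' part, uniqueness of the extension follows because any natural transformation of triangle functors is determined by its values on a spanning set of stalk complexes: on $\Sigma^n(A)$ the triangle-functor compatibility forces $\theta_{\Sigma^n(A)}$ to equal $\Sigma^n(\theta_{0,A})\circ\omega^n_A$. Equivalently, one can invoke Lemma \ref{lem:stan-center}: it remains to see that ${\rm res}$ in (\ref{equ:CK}) is injective for a split $\mathcal{A}$, and an element of its kernel is a central natural transformation vanishing on all stalks, hence zero on the spanning family, hence zero.

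The main obstacle is purely the structural lemma — getting the homotopy equivalence $X\simeq \bigoplus_n\Sigma^n(H^n)$ honestly, i.e.\ writing down the contracting homotopy that kills the acyclic ``$\,{\rm id}\colon C\xrightarrow{\ \sim\ }C\,$'' summands of the differential. Everything after that is a routine transport along Lemma \ref{lem:twofun} and the bookkeeping with $\omega^n$ already done in Corollary \ref{cor:K-pseudo}. One subtlety to keep in mind: the decomposition of $X$ is not canonical, so naturality must be argued through Lemma \ref{lem:twofun} (which, as the authors note, makes the extension independent of the chosen decomposition) rather than by choosing decompositions functorially.
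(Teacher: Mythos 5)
Your proposal is correct and follows essentially the same route as the paper: decompose every bounded complex over a split category into shifted stalk complexes, define $\theta_{\Sigma^n(A)}=\Sigma^n((\theta_0)_A)\circ\omega^n_A$, and extend by additivity via Lemma \ref{lem:twofun}, with uniqueness forced on stalks by the triangle-functor compatibility. The only difference is that you spell out the splitting of complexes and the naturality/uniqueness checks that the paper leaves implicit.
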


\begin{proof}
By assumption, we observe that any complex $X$ in $\mathbf{K}^b(\mathcal{A})$ is isomorphic to a direct sum of stalk complexes. Let $(F, \omega)$ and $\theta_0$ be as in Definition \ref{defn:1}.  We set
$$\theta_{\Sigma^n(A)}=\Sigma^n((\theta_0)_A)\circ \omega^n_A\colon F(\Sigma^n A)\longrightarrow \Sigma^n (A)$$
for any $A\in \mathcal{A}$ and $n\in \mathbb{Z}$. By the additivity, $\theta_X\colon F(X)\rightarrow X$ is defined for any complex $X$; compare Lemma \ref{lem:twofun}. This yields the required extension of $\theta_0$, which is obviously unique.
\end{proof}

For a Krull-Schmidt category $\mathcal{A}$,  we denote by ${\rm ind}\mathcal{A}$  a complete set of representatives of isomorphism classes of indecomposable objects.

The following notion is slightly generalized from \cite{AR}; see also \cite{Chen}. A Krull-Schmidt category $\mathcal{A}$ is called an \emph{Orlov category} provided that the endomorphism ring of  each indecomposable object is a division ring and  that there is a degree function ${\rm deg}\colon {\rm ind}\mathcal{A}\rightarrow \mathbb{Z}$ satisfying ${\rm Hom}_\mathcal{A}(S, S')=0$ for any non-isomorphic $S, S'\in {\rm ind}\mathcal{A}$ with ${\rm deg} S\leq {\rm deg} S'$.

The following basic result is due to \cite[Section 4]{AR}.

\begin{prop}\label{prop:AR}
 Let $\mathcal{A}$ be an Orlov category. Then $\mathcal{A}$ is strongly $\mathbf{K}$-standard.
\end{prop}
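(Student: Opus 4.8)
The plan is to verify the two conditions in Definition \ref{defn:1} directly, using the degree function to induct on the structure of complexes. By Lemma \ref{lem:K-stan} and Lemma \ref{lem:stan-center}, it suffices to show that any $k$-linear pseudo-identity $(F,\omega)$ on $\mathbf{K}^b(\mathcal{A})$ is isomorphic to the identity via a natural transformation restricting to the identity on $\mathcal{A}$, together with uniqueness, i.e. that ${\rm res}$ in (\ref{equ:CK}) is injective for an Orlov category. Both will follow from a single rigidity statement: a natural transformation between triangle functors on $\mathbf{K}^b(\mathcal{A})$ that restricts to the identity on every $\Sigma^n(\mathcal{A})$ must be the identity. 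So first I would set up the machinery: fix the degree function ${\rm deg}\colon {\rm ind}\,\mathcal{A}\to\mathbb{Z}$, and for a bounded complex $X$ define a suitable complexity invariant—something like the number of indecomposable summands of the terms $X^i$, or better a lexicographic invariant built from the spread of degrees and cohomological positions—on which to induct.

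The key mechanism is the following. For an Orlov category, a bounded complex $X$ that is not already a stalk complex admits a ``peeling off'' triangle: one can split off an indecomposable summand $S$ sitting in an extremal position (extremal degree among all indecomposable summands of all terms, placed in the appropriate cohomological spot), producing an exact triangle $\Sigma^{n}(S)\to X'\to X\to \Sigma^{n+1}(S)$ or $X\to X''\to \Sigma^m(S)\to \Sigma X$ with $X', X''$ strictly simpler. Here the Orlov hypothesis ${\rm Hom}_\mathcal{A}(S,S')=0$ whenever ${\rm deg}\,S\leq{\rm deg}\,S'$ and $S\not\cong S'$ is exactly what guarantees that the relevant connecting map is forced and that no unwanted morphisms interfere—this is the analogue of the brutal-truncation argument in Proposition \ref{prop:K-pseudo}, but now carried out term-by-term at the level of indecomposable summands. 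I would then argue by induction: given $\theta_0\colon F|_\mathcal{A}\to{\rm Id}_\mathcal{A}$ (taken to be the identity in the pseudo-identity case), one already has $\theta_{\Sigma^n(S)}=\Sigma^n((\theta_0)_S)\circ\omega^n_S$ on all stalk complexes; applying the triangle functor $F$ to the peeling triangle and using the commuting-square axiom (TR3) together with the inductively constructed $\theta_{X'}$ (or $\theta_{X''}$), one obtains a morphism $\theta_X\colon F(X)\to X$ filling in the middle. The crucial point is that this fill-in is unique, because the Hom-group into which the ambiguity lives vanishes by the Orlov condition applied to the extremal summand $S$ and the summands appearing in $X'$—all of which have degree $\geq {\rm deg}\,S$. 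Uniqueness of the fill-in simultaneously gives naturality (the standard argument that a uniquely-determined family is automatically natural, as in the proof of Lemma \ref{lem:twofun}) and well-definedness (independence of the choice of extremal summand).

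Then I would check compatibility with the connecting isomorphisms: since on stalk complexes $\theta$ was defined using the powers $\omega^n$, the relation $\omega'\circ\theta\Sigma=\Sigma\theta\circ\omega$ required of a natural transformation of triangle functors holds on $\Sigma^n(\mathcal{A})$ for all $n$, and then propagates to all of $\mathbf{K}^b(\mathcal{A})$ because ${\rm Iso}$ of this relation—more precisely the equalizer of the two composites—is a triangulated subcategory containing a generating set (Lemma \ref{lem:iso}, Lemma \ref{lem:Bei}). For uniqueness of $\theta$ (the ``strongly'' part), suppose $\theta,\theta'$ both extend $\theta_0$; then $\theta\circ\theta'^{-1}\in Z_\vartriangle(\mathbf{K}^b(\mathcal{A}))$ restricts to the identity on $\mathcal{A}$, and running the same extremal-peeling induction shows any such central element is the identity on every complex—equivalently, $\mathcal{N}=0$ in the notation of Proposition \ref{prop:NK}.

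The main obstacle I expect is making the ``extremal summand'' peeling precise and genuinely functorial: one must choose, for each complex $X$, a canonical way to split off an indecomposable $S$ of extremal degree so that the induction is coherent, and must verify that the connecting map in the peeling triangle is indeed controlled—i.e. that there is no room for the differential into or out of the extremal summand to do something unexpected. This is where the one-sided vanishing ${\rm Hom}_\mathcal{A}(S,S')=0$ for ${\rm deg}\,S\le{\rm deg}\,S'$ must be deployed carefully, distinguishing the cases where the extremal summand sits in the highest versus lowest cohomological degree, and handling the direction of the triangle accordingly. Once the vanishing lemmas pinning down the relevant $\mathrm{Hom}$-spaces are in place, the rest is a bookkeeping induction in the spirit of Proposition \ref{prop:K-pseudo}.
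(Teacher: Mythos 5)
First, note that the paper's own proof of this proposition is essentially a citation: the pseudo-identity is automatically ``homogeneous'' in the sense of \cite[Definition 4.1]{AR}, existence of the extension is \cite[Theorem 4.7]{AR}, and uniqueness is \cite[diagram (4.10) and Lemma 4.5(2)]{AR}. You are therefore attempting something strictly harder, namely reproving the Achar--Riche extension theorem from scratch; your overall strategy (induction on the complexity of a complex, peeling triangles, (TR3) fill-ins, Hom-vanishing from the degree function) is indeed the strategy of \cite{AR}, so the route is right in spirit. The first concrete gap is that the extremality needed for your peeling triangle to \emph{exist} and the extremality needed for the fill-in ambiguity to \emph{vanish} pull in opposite directions, because the Orlov vanishing is one-sided. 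If $S$ has minimal degree among the indecomposable summands of the terms of a minimal complex $X$, the differential \emph{out of} $S$ vanishes, so $S$ in its cohomological position $j$ is a subcomplex and the triangle is $\Sigma^{-j}(S)\to X\to X''\to \Sigma^{1-j}(S)$; the fill-in ambiguity then lies in ${\rm Hom}(X'',\Sigma^{-j}(S))$, which is built from morphisms \emph{into} $S$ from summands of degree $>{\rm deg}\,S$ --- precisely the morphisms the Orlov condition does not kill. Dually, a maximal-degree summand yields a quotient complex whose ambiguity group consists of morphisms \emph{out of} $S$ to lower-degree summands, again uncontrolled; and in either case copies of $S$ itself surviving in the complementary complex contribute. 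So the vanishing you invoke (``all summands of $X'$ have degree $\geq{\rm deg}\,S$'') does not deliver what you need.

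The second and more serious gap is the claim that ``uniqueness of the fill-in simultaneously gives naturality.'' A fill-in is unique only relative to a fixed triangle, and a morphism $f\colon X\to Y$ need not respect the chosen peeling triangles; there is no general principle that a family of uniquely determined fill-ins is natural. If there were, the brutal-truncation triangles would already do the job in \emph{any} additive category: the fill-in there is always unique, for purely cohomological-degree reasons and with no Orlov hypothesis at all (this is exactly why Proposition \ref{prop:K-pseudo} concludes only that $F(X)\cong X$ object by object, not that the isomorphisms are natural), and every additive category would be strongly $\mathbf{K}$-standard, contradicting Theorem \ref{thm:exam1}. The real role of the Orlov condition is to kill the obstructions to naturality and to compatibility with $\omega$, which after the diagram chase live in groups of the shape ${\rm Hom}_{\mathbf{K}^b(\mathcal{A})}(U,\Sigma^{-1}V)$ for complexes $U,V$ with jointly controlled cohomological and Orlov degrees; isolating and proving these vanishing lemmas, and organizing the double induction around them, is the actual content of \cite[Section 4]{AR} and is not routine bookkeeping. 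The same missing vanishing is what your uniqueness argument (that the kernel $\mathcal{N}$ of (\ref{equ:CK}) is zero, beyond the nilpotence given by Proposition \ref{prop:NK}) would rest on. To repair the proposal you should either prove these vanishing statements in full, or do what the paper does and quote \cite[Theorem 4.7 and Lemma 4.5]{AR} directly.
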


\begin{proof}
 Let $(F, \omega)$ and $\theta_0$ be as in Definition \ref{defn:1}. Then $F|_\mathcal{A}$ is automatically homogeneous in the sense of \cite[Definition 4.1]{AR}. Then the existence of the extension $\theta$ follows from \cite[Theorem 4.7]{AR}, whose uniqueness follows from the commutative diagram (4.10) and Lemma 4.5(2) in \cite{AR}.

 In particular, the homomorphism (\ref{equ:CK}) for $\mathcal{A}$ is an isomorphism. This can  also  be deduced from  \cite[Proposition 2.2(ii)]{Chen}.
\end{proof}

\begin{exm}\label{exm:1}
{\rm  Let $k$ be a commutative artinian ring, and let $A$ be an artin $k$-algebra. Denote by $A\mbox{-proj}$ the category of finitely generated projective $A$-modules. Then $A\mbox{-proj}$ is an Orlov category if and only if $A$ is a triangular algebra, that is, the Ext-quiver of $A$ has no oriented cycles. For the statement,  the ``only if" part is clear, and the ``if" part is contained in \cite[Lemma 2.1]{Chen}.}
\end{exm}

\section{$\mathbf{D}$-standard abelian categories and standard equivalences}

In this section, we introduce the notions of a $\mathbf{D}$-standard abelian category and a strongly $\mathbf{D}$-standard abelian category. These are analogous to the ones in the previous section. The relation to standard derived equivalences is studied.

\subsection{$\mathbf{D}$-standard abelian categories}

Let $k$ be a commutative ring. Throughout, $\mathcal{A}$ is a $k$-linear abelian category.

\begin{defn}\label{defn:2}
We say that $\mathcal{A}$ is  \emph{$\mathbf{D}$-standard} (over $k$) provided that the following holds: given any $k$-linear triangle autoequivalence $(F, \omega)\colon \mathbf{D}^b(\mathcal{A})\rightarrow \mathbf{D}^b(\mathcal{A})$  satisfying $F(\mathcal{A})\subseteq \mathcal{A}$ and any natural isomorphism $\theta_0\colon F|_\mathcal{A}\rightarrow {\rm Id}_\mathcal{A}$, there is a natural transformation $\theta\colon (F, \omega)\rightarrow {\rm Id}_{\mathbf{D}^b(\mathcal{A})}$ of triangle functors which extends $\theta_0$.

The category $\mathcal{A}$ is said to be \emph{strongly $\mathbf{D}$-standard} (over $k$) if furthermore the above extension $\theta$ is always unique. \hfill $\square$
\end{defn}

We mention that the extension $\theta$ is necessarily an isomorphism. The following lemmas are analogous to Lemmas \ref{lem:K-stan}, \ref{lem:stan-center},  and \ref{lem:Kdef}. We omit the proofs.

\begin{lem}\label{lem:D-stan}
Let $\mathcal{A}$ be a $k$-linear abelian category. Then the following statements are equivalent.
\begin{enumerate}
\item  The abelian category $\mathcal{A}$ is $\mathbf{D}$-standard.
 \item For any $k$-linear pseudo-identity $(F, \omega)$ on $\mathbf{D}^b(\mathcal{A})$, there is a natural isomorphism $\eta\colon (F, \omega)\rightarrow {\rm Id}_{\mathbf{D}^b(\mathcal{A})}$ of triangle functors such that $\eta|_\mathcal{A}$ is the identity.
     \item Any $k$-linear pseudo-identity $(F, \omega)$ on $\mathbf{D}^b(\mathcal{A})$ is isomorphic to ${\rm Id}_{\mathbf{D}^b(\mathcal{A})}$, as triangle functors. \hfill $\square$
         \end{enumerate}
\end{lem}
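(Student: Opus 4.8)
The plan is to prove the three implications $(1)\Rightarrow(2)\Rightarrow(3)$ and $(3)\Rightarrow(2)\Rightarrow(1)$, mirroring exactly the argument given for Lemma~\ref{lem:K-stan} in the homotopy-category setting; the only inputs needed are the derived-category analogues already established in Section~3.

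First I would observe that $(1)\Rightarrow(2)$ is essentially immediate: given a $k$-linear pseudo-identity $(F,\omega)$ on $\mathbf{D}^b(\mathcal{A})$, the restriction $F|_\mathcal{A}$ equals $\mathrm{Id}_\mathcal{A}$ by definition, so we may take $\theta_0=\mathrm{Id}_{\mathrm{Id}_\mathcal{A}}\colon F|_\mathcal{A}\rightarrow \mathrm{Id}_\mathcal{A}$; the $\mathbf{D}$-standard hypothesis then produces a natural transformation $\theta\colon (F,\omega)\rightarrow \mathrm{Id}_{\mathbf{D}^b(\mathcal{A})}$ of triangle functors extending $\theta_0$, i.e.\ with $\theta|_\mathcal{A}$ the identity. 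As noted right after Definition~\ref{defn:2}, such an extension is automatically an isomorphism (the triangulated subcategory $\mathrm{Iso}(\theta)$ contains $\mathcal{A}$, hence is all of $\mathbf{D}^b(\mathcal{A})$). The implication $(2)\Rightarrow(3)$ is trivial, since a natural isomorphism of triangle functors witnesses $(F,\omega)\simeq \mathrm{Id}_{\mathbf{D}^b(\mathcal{A})}$.

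Next, $(3)\Rightarrow(2)$ is exactly Lemma~\ref{lem:D-pseudo}: if a pseudo-identity $(F,\omega)$ is isomorphic to $\mathrm{Id}_{\mathbf{D}^b(\mathcal{A})}$ as triangle functors, that lemma furnishes an isomorphism $\theta\colon(F,\omega)\rightarrow\mathrm{Id}_{\mathbf{D}^b(\mathcal{A})}$ whose restriction to $\mathcal{A}$ is the identity transformation, which is precisely statement~(2). Finally, for $(2)\Rightarrow(1)$, let $(F,\omega)$ and $\theta_0\colon F|_\mathcal{A}\rightarrow\mathrm{Id}_\mathcal{A}$ be as in Definition~\ref{defn:2}. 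By Corollary~\ref{cor:D-pseudo} and its proof, there is a pseudo-identity $(F',\omega')$ on $\mathbf{D}^b(\mathcal{A})$ together with a natural isomorphism $\theta'\colon(F,\omega)\rightarrow(F',\omega')$ of triangle functors such that $\theta'|_\mathcal{A}=\theta_0$; here one uses that $F$ is an autoequivalence with $F(\mathcal{A})\subseteq\mathcal{A}$ whose restriction to $\mathcal{A}$ is isomorphic to $\mathrm{Id}_\mathcal{A}$ via $\theta_0$, and the adjusting-isomorphism construction of Lemma~\ref{lem:iso-tri}. Applying~(2) to $(F',\omega')$ gives an isomorphism $\eta\colon(F',\omega')\rightarrow\mathrm{Id}_{\mathbf{D}^b(\mathcal{A})}$ with $\eta|_\mathcal{A}$ the identity; then $\theta=\eta\circ\theta'\colon(F,\omega)\rightarrow\mathrm{Id}_{\mathbf{D}^b(\mathcal{A})}$ is the desired extension of $\theta_0$.

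I do not expect a genuine obstacle here, since every ingredient is a derived-category transcription of a homotopy-category lemma already proved in Section~3; indeed the paper itself says "We omit the proofs." The only point requiring a little care is the invocation of Corollary~\ref{cor:D-pseudo}'s \emph{proof} rather than its statement in the $(2)\Rightarrow(1)$ step, namely that the pseudo-identity $(F',\omega')$ can be chosen with the adjusting isomorphisms on each $\Sigma^n(\mathcal{A})$ prescribed by $\Sigma^n(\theta_0)_A\circ\omega^n_A$, so that $\theta'|_\mathcal{A}$ is exactly $\theta_0$ and not merely isomorphic to it; this is handled verbatim as in the proof of Corollary~\ref{cor:K-pseudo}.
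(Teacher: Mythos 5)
Your proposal is correct and follows exactly the route the paper intends: the paper omits the proof of this lemma, referring to the analogous Lemma \ref{lem:K-stan}, whose proof is precisely your argument with $\mathbf{K}^b$ replaced by $\mathbf{D}^b$ (using Lemma \ref{lem:D-pseudo} for $(3)\Rightarrow(2)$ and Corollary \ref{cor:D-pseudo} with its adjusting-isomorphism construction for $(2)\Rightarrow(1)$). Your closing remark about needing the proof of Corollary \ref{cor:D-pseudo}, rather than just its statement, to arrange $\theta'|_\mathcal{A}=\theta_0$ exactly is the right point of care and matches the paper's own phrasing ``By Corollary \ref{cor:K-pseudo} and its proof''.
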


\begin{lem}\label{lem:Dcenter}
Let $\mathcal{A}$ be a $k$-linear abelian category. Then it is strongly $\mathbf{D}$-standard if and only if it is $\mathbf{D}$-standard and the homomorphism  ${\rm res}\colon Z_\vartriangle(\mathbf{D}^b(\mathcal{A}))\rightarrow Z(\mathcal{A})$ in (\ref{equ:CD}) is an isomorphism. \hfill $\square$
\end{lem}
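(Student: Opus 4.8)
\textbf{Proof proposal for Lemma \ref{lem:Dcenter}.}

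The plan is to mimic verbatim the proof of Lemma \ref{lem:stan-center}, replacing the homotopy category $\mathbf{K}^b(\mathcal{A})$ by the bounded derived category $\mathbf{D}^b(\mathcal{A})$, and the surjective restriction homomorphism (\ref{equ:CK}) by its analogue (\ref{equ:CD}), which has already been observed to be surjective via the section ${\rm ind}\colon Z(\mathcal{A})\to Z_\vartriangle(\mathbf{D}^b(\mathcal{A}))$, $\mu\mapsto \mathbf{D}^b(\mu)$. Throughout I use the fact, recorded just before Definition \ref{defn:2}, that any extension $\theta$ of $\theta_0$ along $\mathcal{A}\hookrightarrow \mathbf{D}^b(\mathcal{A})$ is automatically an isomorphism: the triangulated subcategory ${\rm Iso}(\theta)$ of Lemma \ref{lem:iso} contains $\mathcal{A}$, hence equals $\mathbf{D}^b(\mathcal{A})$.

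For the ``only if'' direction, suppose $\mathcal{A}$ is strongly $\mathbf{D}$-standard. Since (\ref{equ:CD}) is already surjective, it suffices to show it is injective. Let $\lambda$ lie in its kernel, so $(1+\lambda)|_\mathcal{A}={\rm Id}_\mathcal{A}$. Then both $1+\lambda$ and $1$ are natural transformations $({\rm Id}_{\mathbf{D}^b(\mathcal{A})},{\rm Id}_\Sigma)\to ({\rm Id}_{\mathbf{D}^b(\mathcal{A})},{\rm Id}_\Sigma)$ of triangle functors extending $({\rm Id}_{\mathbf{D}^b(\mathcal{A})})|_\mathcal{A}={\rm Id}_\mathcal{A}$; by the uniqueness clause in the definition of strongly $\mathbf{D}$-standard, $1+\lambda=1$, i.e.\ $\lambda=0$. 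Hence (\ref{equ:CD}) is injective, thus an isomorphism, and in particular $\mathcal{A}$ is $\mathbf{D}$-standard by definition.

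For the ``if'' direction, assume $\mathcal{A}$ is $\mathbf{D}$-standard and that (\ref{equ:CD}) is an isomorphism. Given a $k$-linear triangle autoequivalence $(F,\omega)$ on $\mathbf{D}^b(\mathcal{A})$ with $F(\mathcal{A})\subseteq \mathcal{A}$ and an isomorphism $\theta_0\colon F|_\mathcal{A}\to {\rm Id}_\mathcal{A}$, the $\mathbf{D}$-standard property provides one extension; I must show it is unique. If $\theta,\theta'\colon (F,\omega)\to {\rm Id}_{\mathbf{D}^b(\mathcal{A})}$ are two such extensions, then both are isomorphisms by the remark above, so $\lambda:=\theta\circ\theta'^{-1}$ is a natural automorphism of the triangle functor $({\rm Id}_{\mathbf{D}^b(\mathcal{A})},{\rm Id}_\Sigma)$, i.e.\ $\lambda\in Z_\vartriangle(\mathbf{D}^b(\mathcal{A}))$, whose restriction to $\mathcal{A}$ is $\theta_0\circ\theta_0^{-1}={\rm Id}_\mathcal{A}$. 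Injectivity of (\ref{equ:CD}) forces $\lambda={\rm Id}$, hence $\theta=\theta'$. This completes the proof. There is no real obstacle here; the only point requiring any care is confirming that $\theta\circ\theta'^{-1}$ is a morphism of triangle functors $({\rm Id},{\rm Id}_\Sigma)\to({\rm Id},{\rm Id}_\Sigma)$ — which is immediate since it is a composite of morphisms of triangle functors $(F,\omega)\to({\rm Id},{\rm Id}_\Sigma)$ — and hence genuinely lands in $Z_\vartriangle(\mathbf{D}^b(\mathcal{A}))$ rather than merely in $Z(\mathbf{D}^b(\mathcal{A}))$.
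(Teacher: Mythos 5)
Your proposal is correct and follows exactly the argument the paper intends: the paper omits the proof of Lemma \ref{lem:Dcenter} precisely because it is the verbatim analogue of the proof of Lemma \ref{lem:stan-center}, which is what you have reproduced, using surjectivity of (\ref{equ:CD}) via ${\rm ind}$, the uniqueness clause to kill kernel elements in one direction, and injectivity applied to $\theta\circ\theta'^{-1}\in Z_\vartriangle(\mathbf{D}^b(\mathcal{A}))$ in the other. No issues.
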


\begin{lem}
Let $\mathcal{A}$ be a $\mathbf{D}$-standard abelian category. Then the following statements hold.
\begin{enumerate}
\item Let $(F, \omega)\colon \mathbf{D}^b(\mathcal{A})\rightarrow \mathbf{D}^b(\mathcal{A})$ be a triangle autoequivalence with $F(\mathcal{A})\subseteq \mathcal{A}$. Then there is an isomorphism $(F, \omega)\stackrel{\sim}\longrightarrow \mathbf{D}^b(F|_\mathcal{A})$ of triangle functors.
    \item Assume further that $\mathcal{A}$ is strongly $\mathbf{D}$-standard. Let $F_1, F_2\colon \mathcal{A}\rightarrow \mathcal{A}$ be two autoequivalences, which are isomorphic. Then any natural transformation $\mathbf{D}^b(F_1)\rightarrow \mathbf{D}^b(F_2)$ of triangle functors is of the form $\mathbf{D}^b(\eta)$ for a unique natural transformation $\eta\colon F_1\rightarrow F_2$. \hfill $\square$
    \end{enumerate}
    \end{lem}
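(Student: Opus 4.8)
The plan is to mimic exactly the arguments for the homotopy category case (Lemma \ref{lem:Kdef}), now using the derived-category analogues that have already been established in the excerpt: Lemma \ref{lem:D-res} in place of Lemma \ref{lem:K-res}, Proposition \ref{prop:D-pseudo} and Corollary \ref{cor:D-pseudo} in place of their $\mathbf{K}$-versions, Lemma \ref{lem:D-stan} in place of Lemma \ref{lem:K-stan}, and Lemma \ref{lem:Dcenter} in place of Lemma \ref{lem:stan-center}. The homomorphism ``$\mathrm{res}$'' in (\ref{equ:CD}) plays the role that (\ref{equ:CK}) played before. Note that one of the hypotheses of Lemma \ref{lem:Kdef}(1), namely that $\mathcal{A}$ has split idempotents, is automatic here because $\mathcal{A}$ is abelian, so it disappears from the statement; this is why the condition was dropped.

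For part (1): by Lemma \ref{lem:D-res}(2), since $(F,\omega)$ is an equivalence with $F(\mathcal{A})\subseteq\mathcal{A}$, the restriction $F|_\mathcal{A}\colon\mathcal{A}\rightarrow\mathcal{A}$ is an autoequivalence. Fix a quasi-inverse $G$ of $F|_\mathcal{A}$ (an exact autoequivalence of $\mathcal{A}$, since it is quasi-inverse to an exact autoequivalence). Then $\mathbf{D}^b(G)\circ F$ is a triangle autoequivalence of $\mathbf{D}^b(\mathcal{A})$ sending $\mathcal{A}$ into $\mathcal{A}$, and its restriction to $\mathcal{A}$ is $G\circ F|_\mathcal{A}\cong\mathrm{Id}_\mathcal{A}$. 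By Corollary \ref{cor:D-pseudo} it is isomorphic to a pseudo-identity, and hence by the $\mathbf{D}$-standard hypothesis (in the form of Lemma \ref{lem:D-stan}) it is isomorphic to $\mathrm{Id}_{\mathbf{D}^b(\mathcal{A})}$ as triangle functors. Composing with $\mathbf{D}^b(F|_\mathcal{A})$ yields an isomorphism $(F,\omega)\xrightarrow{\sim}\mathbf{D}^b(F|_\mathcal{A})$ of triangle functors, as required.

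For part (2): fix a natural isomorphism $\delta\colon F_2\rightarrow F_1$ in $\mathcal{A}$, inducing $\mathbf{D}^b(\delta)\colon\mathbf{D}^b(F_2)\rightarrow\mathbf{D}^b(F_1)$. Given a natural transformation $\theta\colon\mathbf{D}^b(F_1)\rightarrow\mathbf{D}^b(F_2)$ of triangle functors, set $\eta=\theta|_\mathcal{A}\colon F_1\rightarrow F_2$. Both $\mathbf{D}^b(\delta)\circ\theta$ and $\mathbf{D}^b(\delta)\circ\mathbf{D}^b(\eta)=\mathbf{D}^b(\delta\circ\eta)$ are natural transformations $\mathbf{D}^b(F_1)\rightarrow\mathbf{D}^b(F_1)$ of triangle functors, so by Lemma \ref{lem:nat} they are of the form $\mathbf{D}^b(F_1)\gamma$ and $\mathbf{D}^b(F_1)\gamma'$ for unique $\gamma,\gamma'\in Z_\vartriangle(\mathbf{D}^b(\mathcal{A}))$. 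Their restrictions to $\mathcal{A}$ agree (both equal the element of $Z(\mathcal{A})$ corresponding to $\delta\circ\eta$ via $F_1$), and since $\mathcal{A}$ is strongly $\mathbf{D}$-standard, Lemma \ref{lem:Dcenter} gives that $\mathrm{res}$ in (\ref{equ:CD}) is injective, whence $\gamma=\gamma'$ and therefore $\theta=\mathbf{D}^b(\eta)$. Uniqueness of $\eta$ is clear since $\eta$ is forced to be $\theta|_\mathcal{A}$. The only mild subtlety — and the step I would be most careful about — is checking that $G$ can genuinely be taken to be an exact functor so that $\mathbf{D}^b(G)$ makes sense, and that the various identifications of centers in diagram (\ref{equ:center}) are compatible with the maps appearing here; but these are bookkeeping issues rather than real obstacles, since the $\mathbf{K}$-case proof goes through verbatim.
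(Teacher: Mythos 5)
Your proposal is correct and is essentially the paper's intended argument: the paper omits the proof precisely because it is the verbatim transcription of the proof of Lemma \ref{lem:Kdef} with the $\mathbf{D}$-analogues (Lemma \ref{lem:D-res}, Corollary \ref{cor:D-pseudo}, Lemmas \ref{lem:D-stan} and \ref{lem:Dcenter}) substituted for the $\mathbf{K}$-versions, which is exactly what you carry out. Your side remarks are also fine: the quasi-inverse $G$ is automatically exact since any equivalence of abelian categories is exact, so $\mathbf{D}^b(G)$ is well defined.
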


The following fact is essentially contained in the argument of \cite[2.16.4]{Or}.

\begin{prop}\label{prop:D-par-nat}
Let $(F, \omega)\colon \mathbf{D}^b(\mathcal{A})\rightarrow \mathbf{D}^b(\mathcal{A})$ be a triangle autoequivalence satisfying $F(\mathcal{A})\subseteq \mathcal{A}$. Assume that $\theta_0\colon F|_\mathcal{A}\rightarrow {\rm Id}_\mathcal{A}$ is a natural isomorphism, and that $\xi\colon A\rightarrow \Sigma^n(B)$ is a morphism in $\mathbf{D}^b(\mathcal{A})$ for $A, B\in \mathcal{A}$ and $n\geq 0$. Then we have $$\xi \circ (\theta_0)_A=\Sigma^n((\theta_0)_B)\circ \omega^n_B\circ F(\xi).$$
\end{prop}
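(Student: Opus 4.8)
The statement asserts that the natural isomorphism $\theta_0$ on $\mathcal{A}$ is automatically compatible with arbitrary morphisms $A\to \Sigma^n(B)$ in the derived category, i.e. with extensions. The natural approach is induction on $n\geq 0$, using the structure of $\operatorname{Ext}^n$ as iterated extensions together with the compatibility of $F$ with translation via the connecting isomorphism $\omega$. For $n=0$ the identity $\xi\circ(\theta_0)_A=(\theta_0)_B\circ F(\xi)$ is just the naturality of $\theta_0$ on $\mathcal{A}$ (note $\omega^0={\rm Id}_F$). So the content is the inductive step.

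First I would reduce the case $n\geq 1$ to $n-1$ as follows. A morphism $\xi\colon A\to\Sigma^n(B)$ can be realized by choosing a short exact sequence $0\to B\to E\to C\to 0$ in $\mathcal{A}$ — or more precisely an exact triangle $B\to E\to C\xrightarrow{h}\Sigma(B)$ with $E,C\in\mathcal{A}$ — such that $\xi$ factors as $A\xrightarrow{\xi'}\Sigma^{n-1}(C)\xrightarrow{\Sigma^{n-1}(h)}\Sigma^{n-1}\Sigma(B)=\Sigma^n(B)$; this is possible because every element of $\operatorname{Hom}(A,\Sigma^n(B))$ with $n\geq 1$ is a composite of a morphism into $\Sigma^{n-1}(C)$ for some $C\in\mathcal{A}$ with the connecting map of an extension of $B$ (e.g. take $C$ so that $\tau_{\geq 1-n}$ or a suitable brutal/good truncation realizes $\xi$, exactly as in the truncation arguments of Lemma \ref{lem:D-res} and Proposition \ref{prop:D-pseudo}). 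Then I apply the induction hypothesis to $\xi'\colon A\to\Sigma^{n-1}(C)$ to get $\xi'\circ(\theta_0)_A=\Sigma^{n-1}((\theta_0)_C)\circ\omega^{n-1}_C\circ F(\xi')$, and I apply the base case (naturality of $\theta_0$) to the honest morphism $h\colon C\to\Sigma(B)$ in $\mathbf{D}^b(\mathcal{A})$ between objects of $\mathcal{A}$ — but here I must be careful: $h$ maps $C$ into $\Sigma(B)$, which is not a stalk complex in degree zero, so I actually need the case $n=1$ of the statement applied to $h$, namely $h\circ(\theta_0)_C=\Sigma((\theta_0)_B)\circ\omega_B\circ F(h)$.

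So the real atom is $n=1$: given $h\colon C\to\Sigma(B)$ with $B,C\in\mathcal{A}$, show $h\circ(\theta_0)_C=\Sigma((\theta_0)_B)\circ\omega_B\circ F(h)$. I would prove this directly. Complete $h$ to an exact triangle $B\xrightarrow{u}E\xrightarrow{v}C\xrightarrow{h}\Sigma(B)$ with $E\in\mathcal{A}$ (rotation of the triangle of the short exact sequence $0\to B\to E\to C\to 0$). Apply $F$ to obtain the exact triangle $F(B)\xrightarrow{F(u)}F(E)\xrightarrow{F(v)}F(C)\xrightarrow{\omega_B\circ F(h)}\Sigma F(B)$, which by $F(\mathcal{A})\subseteq\mathcal{A}$ is again (isomorphic to) the image of a short exact sequence in $\mathcal{A}$. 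Now $\theta_0$, being natural on $\mathcal{A}$, gives a commutative ladder between the two exact triangles $F(B)\to F(E)\to F(C)\to\Sigma F(B)$ and $B\to E\to C\to\Sigma(B)$ with vertical maps $(\theta_0)_B,(\theta_0)_E,(\theta_0)_C$ on the first three terms; the first two squares commute by naturality of $\theta_0$ with respect to $u$ and $v$, and then the third square — which is precisely the desired identity — follows from the compatibility of the triangle maps, i.e. from the fact that in the ladder the rightmost square automatically commutes once the left two do, given that these are genuine triangles (one can invoke that $\Sigma((\theta_0)_B)\circ(\omega_B\circ F(h))$ and $h\circ(\theta_0)_C$ both complete the same map of triangles, and that the map of triangles is determined on the third vertex, or more robustly use the long exact sequence and a diagram chase).

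**Main obstacle.** The subtle point is the final square: in a general triangulated category a map of triangles is not determined by its first two components, so I cannot conclude commutativity of the third square for free. I expect this is exactly where one must use an extra input — either that $\operatorname{Hom}(F(C),\Sigma F(B))$ can be computed via $\operatorname{Ext}^1_{\mathcal{A}}$ and compatibility is detected there (the argument of \cite[2.16.4]{Or}), or that one filters $C$ and $B$ and reduces to the naturality already known, or that one applies $\operatorname{Hom}(A,-)$ for a test object and uses that $\theta_0$ induces an isomorphism on $\operatorname{Ext}^1$. Concretely I would phrase it as: the difference $h\circ(\theta_0)_C-\Sigma((\theta_0)_B)\circ\omega_B\circ F(h)\colon F(C)\to\Sigma(B)$ precomposed with $F(v)$ is zero (both sides kill it, using the commuting squares), hence it factors through $\Sigma F(B)\to\Sigma(B)$ coming from the triangle; tracking which map this is and using that $\theta_0$ is an isomorphism on $\mathcal{A}$ forces it to vanish. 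Making that last reduction airtight — identifying the correct factorization and showing the resulting scalar/morphism is zero — is the step I'd budget the most care for; everything else (the induction on $n$, the truncation reduction, $\omega^{n}=\Sigma^{n-1}\omega\circ\omega^{n-1}\Sigma$ bookkeeping) is routine.
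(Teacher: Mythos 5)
Your skeleton matches the paper's proof exactly: the case $n=0$ is the naturality of $\theta_0$, the reduction to $n=1$ is the factorization $\xi=\Sigma^{n-1}(\xi_2)\circ\xi_1$ through an object of $\mathcal{A}$, and the $n=1$ case is handled by realizing the morphism as the connecting map of a short exact sequence and comparing the two exact triangles via $\theta_0$. The gap is in the one step you yourself flag, and the salvage you sketch there would fail. With your notation $B\xrightarrow{u}E\xrightarrow{v}C\xrightarrow{h}\Sigma(B)$, you form the difference $d=h\circ(\theta_0)_C-\Sigma((\theta_0)_B)\circ\omega_B\circ F(h)\colon F(C)\to\Sigma(B)$, observe $d\circ F(v)=0$, and conclude that $d$ factors through the cone of $F(v)$, i.e. $d=e\circ\omega_B\circ F(h)$ for some $e\in\mathrm{Hom}(\Sigma F(B),\Sigma(B))\cong\mathrm{Hom}_{\mathcal{A}}(F(B),B)$. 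That Hom-group is not zero (it contains $(\theta_0)_B$), so nothing forces $d=0$; ``$\theta_0$ is an isomorphism'' produces no contradiction from this factorization.

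The correct fix is a small redirection of the same idea, and it is what the paper's phrase ``induces a commutative diagram between exact triangles'' silently encodes (it is the uniqueness statement of \cite[Proposition 1.1.9]{BBD}, which the paper invokes explicitly elsewhere). Do not fix the third vertical map in advance and try to verify the square: instead, use (TR3) to produce \emph{some} $c'\colon F(C)\to C$ completing $((\theta_0)_B,(\theta_0)_E,-)$ to a morphism from the triangle $F(B)\to F(E)\to F(C)\xrightarrow{\omega_B\circ F(h)}\Sigma F(B)$ to $B\to E\to C\xrightarrow{h}\Sigma(B)$. Then $c'\circ F(v)=v\circ(\theta_0)_E=(\theta_0)_C\circ F(v)$, so $c'-(\theta_0)_C$ factors through $\omega_B\circ F(h)$ via some $\psi\in\mathrm{Hom}_{\mathbf{D}^b(\mathcal{A})}(\Sigma F(B),C)\cong\mathrm{Ext}^{-1}_{\mathcal{A}}(F(B),C)=0$. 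Hence $c'=(\theta_0)_C$, and the commutativity of the third square of the morphism of triangles is exactly the claimed identity. In short: the vanishing you need is that of $\mathrm{Hom}(\Sigma F(B),C)$ (a negative Ext between objects of $\mathcal{A}$), not of $\mathrm{Hom}(F(B),B)$; with that substitution your argument closes and coincides with the paper's.
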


\begin{proof}
The case of $n=0$ follows from the naturality of $\theta_0$. It suffices to prove the result for the case $n=1$. The general case follows by induction, once we observe the following fact: if $n> 1$, there exist an object $C\in \mathcal{A}$ and two morphisms $\xi_1\colon A\rightarrow \Sigma^{n-1}(C)$ and $\xi_2\colon C\rightarrow \Sigma(B)$ satisfying $\xi=\Sigma^{n-1}(\xi_2)\circ \xi_1$.

We assume that $n=1$. There is a short exact sequence $0\rightarrow B\stackrel{f}\rightarrow E\stackrel{g}\rightarrow A\rightarrow 0$ in $\mathcal{A}$,  which fits into an exact triangle $B\stackrel{f}\rightarrow E\stackrel{g}\rightarrow A\stackrel{\xi}\rightarrow \Sigma(B)$. The following commutative diagram between short exact sequences
\[\xymatrix{
0\ar[r] & F(B) \ar[d]^-{(\theta_0)_B} \ar[r]^-{F(f)} & F(E) \ar[d]^-{(\theta_0)_E} \ar[r]^-{F(g)} & F(A) \ar[d]^-{(\theta_0)_A}\ar[r] & 0\\
0\ar[r] & B \ar[r]^-{f} & E \ar[r]^-{g} & A \ar[r] & 0
}\]
induces a commutative diagram between exact triangles
\[\xymatrix{
 F(B) \ar[d]^-{(\theta_0)_B} \ar[r]^-{F(f)} & F(E) \ar[d]^-{(\theta_0)_E} \ar[r]^-{F(g)} & F(A) \ar[d]^-{(\theta_0)_A}\ar[r]^-{\omega_B\circ F(\xi)} &  \Sigma F(B) \ar[d]^{\Sigma((\theta_0)_B)}\\
  B \ar[r]^-{f} & E \ar[r]^-{g} & A \ar[r]^-{\xi} & \Sigma(B).
}\]
 Then we are done with $\xi\circ (\theta_0)_A=\Sigma((\theta_0)_B)\circ \omega_B\circ F(\xi)$.
\end{proof}

In view of Theorem \ref{thm:2} below, the following result extends \cite[Theorem 1.8]{MY}.

\begin{cor}
Let $\mathcal{A}$ be a $k$-linear abelian category which is hereditary. Then $\mathcal{A}$ is strongly $\mathbf{D}$-standard.
\end{cor}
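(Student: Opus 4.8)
The plan is to verify the two defining conditions of \textbf{strongly $\mathbf{D}$-standard} from Definition \ref{defn:2}: existence of an extension $\theta$ of a given $\theta_0\colon F|_\mathcal{A}\rightarrow \mathrm{Id}_\mathcal{A}$, and its uniqueness. The key structural input from hereditariness is that every bounded complex $X$ over $\mathcal{A}$ is isomorphic in $\mathbf{D}^b(\mathcal{A})$ to the direct sum $\bigoplus_{n\in\mathbb{Z}}\Sigma^n(H^n(X))$ of its (shifted) cohomologies. Thus the subcategory $\bigsqcup_n \Sigma^n(\mathcal{A})$ of shifted stalk complexes satisfies the hypothesis of Lemma \ref{lem:twofun} after we allow countable (in fact finite, by boundedness) direct sums — every object of $\mathbf{D}^b(\mathcal{A})$ is a finite direct sum of shifted stalks.

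First I would handle existence. Given $(F,\omega)$ and $\theta_0$, define on shifted stalk complexes
\[
\theta_{\Sigma^n(A)}=\Sigma^n\bigl((\theta_0)_A\bigr)\circ\omega^n_A\colon F\bigl(\Sigma^n(A)\bigr)\longrightarrow \Sigma^n(A),
\]
for $A\in\mathcal{A}$ and $n\in\mathbb{Z}$ (using $F(\mathcal{A})\subseteq\mathcal{A}$ together with the connecting isomorphism $\omega^n$ from Subsection 2.1; note $F$ need not send $\Sigma^n(\mathcal{A})$ into $\Sigma^n(\mathcal{A})$, but $\omega^n$ repairs this). This is a natural isomorphism when restricted to each $\Sigma^n(\mathcal{A})$ by naturality of $\theta_0$ and $\omega^n$. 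The crucial point — and the main obstacle — is to show that these partial natural transformations glue to a natural transformation on all of $\bigsqcup_n\Sigma^n(\mathcal{A})$, i.e.\ that $\theta$ is compatible with morphisms $\xi\colon\Sigma^m(A)\rightarrow\Sigma^n(B)$ between stalk complexes of different degrees. Such morphisms live in $\mathrm{Hom}_{\mathbf{D}^b(\mathcal{A})}(A,\Sigma^{n-m}(B))$, which vanishes for $n-m<0$ and equals $\mathrm{Hom}_\mathcal{A}(A,B)$ for $n=m$; for $n-m=1$ it is $\mathrm{Ext}^1_\mathcal{A}(A,B)$, and since $\mathcal{A}$ is hereditary all higher $\mathrm{Ext}$'s vanish, so only the cases $n-m\in\{0,1\}$ matter. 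Compatibility in these cases is exactly the content of Proposition \ref{prop:D-par-nat}. Once the partial transformation is shown natural, Lemma \ref{lem:twofun} produces a unique extension $\theta\colon F\rightarrow\mathrm{Id}_{\mathbf{D}^b(\mathcal{A})}$ of ordinary functors; one then checks it is a morphism of triangle functors, i.e.\ $\Sigma\theta\circ\omega=\theta\Sigma$, which it suffices to verify on the generating subcategory $\mathcal{A}$ (Lemma \ref{lem:iso} shows the locus where a triangle-functor identity holds is a triangulated subcategory, reducing the check to $\mathcal{A}$), where it is immediate from the definition of $\theta_{\Sigma^n(A)}$ and the recursion $\omega^{n+1}=\Sigma\omega^n\circ\omega\Sigma^n$.

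For uniqueness, suppose $\theta,\theta'$ both extend $\theta_0$. As remarked after Definition \ref{defn:2}, both are isomorphisms, so $\theta\circ\theta'^{-1}\in Z_\vartriangle(\mathbf{D}^b(\mathcal{A}))$ restricts to the identity on $\mathcal{A}$; by Lemma \ref{lem:Dcenter} it suffices to show $\mathrm{res}\colon Z_\vartriangle(\mathbf{D}^b(\mathcal{A}))\rightarrow Z(\mathcal{A})$ in (\ref{equ:CD}) is injective. But any $\lambda$ in its kernel is, by the hereditary decomposition $X\simeq\bigoplus_n\Sigma^n(H^n(X))$ and the fact that $\lambda$ commutes with $\Sigma$, determined on each stalk summand by $\lambda|_\mathcal{A}=0$; together with Lemma \ref{lem:twofun} (again using that every object is a finite direct sum of shifted stalks, and that $\lambda$ must respect these decompositions by naturality) this forces $\lambda=0$. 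Hence $\theta=\theta'$, and $\mathcal{A}$ is strongly $\mathbf{D}$-standard. I expect the gluing step via Proposition \ref{prop:D-par-nat} — carefully reducing arbitrary morphisms between shifted stalks to the degree-shift-one case using hereditariness — to be the technical heart; everything else is bookkeeping with the connecting isomorphisms $\omega^n$.
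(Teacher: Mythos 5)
Your proposal is correct and follows essentially the same route as the paper: define $\theta$ on shifted stalk complexes by $\Sigma^n((\theta_0)_A)\circ\omega^n_A$, establish compatibility with morphisms between shifted stalks via Proposition \ref{prop:D-par-nat}, and extend to all of $\mathbf{D}^b(\mathcal{A})$ by Lemma \ref{lem:twofun} using the hereditary decomposition $X\simeq\bigoplus_{n}\Sigma^{-n}(H^n(X))$, with uniqueness coming from the fact that the values on shifted stalks are forced. The one cosmetic point is that the triangle-functor identity $\Sigma\theta\circ\omega=\theta\Sigma$ is better deduced from the uniqueness clause of Lemma \ref{lem:twofun} (both sides are natural transformations agreeing on all shifted stalks) than from Lemma \ref{lem:iso}, which concerns where a transformation is invertible rather than where two transformations coincide.
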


\begin{proof}
 Assume that $(F, \omega)$ and $\theta_0$ are as above. For $A\in \mathcal{A}$ and $n\in \mathbb{Z}$,  we define $\theta_{\Sigma^n(A)}\colon F\Sigma^n (A)\rightarrow \Sigma^n(A)$ to be $\Sigma^n((\theta_0)_A)\circ \omega^n_A$. Proposition \ref{prop:D-par-nat} implies that for any morphism $\xi\colon\Sigma^n(A)\rightarrow \Sigma^m(B)$, we have
$$\xi\circ  \theta_{\Sigma^n(A)}=\theta_{\Sigma^m(B)}\circ F(\xi). $$
Here, we implicitly use the fact that $\omega^m_B=\Sigma^n(\omega_B^{m-n})\circ \omega^n_{\Sigma^{m-n}(B)}$. Since $\mathcal{A}$ is hereditary,  each complex $X$ in $\mathbf{D}^b(\mathcal{A})$ is isomorphic to $\oplus_{n\in \mathbb{Z}} \Sigma^{-n}(H^n(X))$. By Lemma \ref{lem:twofun}, we obtain a natural isomorphism $\theta\colon F\rightarrow {\rm Id}_{\mathbf{D}^b(\mathcal{A})}$; it is a natural isomorphism between triangle functors. This is the required extension of $\theta_0$, which is uniquely determined by $\theta_0$.
\end{proof}

The following notion is due to \cite{Or}. Recall that a sequence $\{P_i\}_{i\in \mathbb{Z}}$ of objects in $\mathcal{A}$ is \emph{ample} provided that for each object $X$, there exists $i(X)\in \mathbb{Z}$ such that for any $i\leq i(X)$, the following conditions hold:
\begin{enumerate}
\item there is an epimorphism $P_i^n\rightarrow X$ for some $n=n(i)$;
\item ${\rm Hom}_\mathcal{A}(X, P_i)=0$, and ${\rm Ext}_\mathcal{A}^j(P_i, X)=0$ for any $j>0$.
\end{enumerate}
We observe that if $\mathcal{A}$ has an ample sequence, there are no nonzero projective objects.

We have the following variant of \cite[Proposition 2.16]{Or}; see also \cite[Appendix]{BO}. We mention that the result plays an important role in the proof of the following famous theorem: any $k$-linear triangle equivalence between the bounded derived categories of coherent sheaves on smooth projective varieties is of \emph{Fourier-Mukai type}; see \cite{Or}.

\begin{prop}\label{prop:Orlov}
Let $\mathcal{A}$ and $\mathcal{B}$ be $k$-linear abelian categories with a triangle equivalence $G\colon \mathbf{D}^b(\mathcal{A})\rightarrow \mathbf{D}^b(\mathcal{B})$. Assume that the following conditions are satisfied:
\begin{enumerate}
\item $G(\mathcal{A})\cap \mathcal{B}$ contains an ample sequence of objects in $\mathcal{B}$;
\item for any object $X\in \mathcal{A}$, there is an epimorphism $P\rightarrow X$ with $P\in \mathcal{A}\cap G^{-1}(\mathcal{B})$. Here, we denote by $G^{-1}$ a quasi-inverse of $G$.
\end{enumerate}
Then $\mathcal{A}$ is strongly $\mathbf{D}$-standard.

In particular, an abelian category with an ample sequence of objects is strongly $\mathbf{D}$-standard.
\end{prop}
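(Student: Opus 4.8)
The plan is to verify Definition \ref{defn:2} together with its uniqueness clause directly. So fix a $k$-linear triangle autoequivalence $(F,\omega)\colon \mathbf{D}^b(\mathcal{A})\to\mathbf{D}^b(\mathcal{A})$ with $F(\mathcal{A})\subseteq\mathcal{A}$ and a natural isomorphism $\theta_0\colon F|_\mathcal{A}\to{\rm Id}_\mathcal{A}$, and produce a unique natural transformation $\theta\colon(F,\omega)\to{\rm Id}_{\mathbf{D}^b(\mathcal{A})}$ of triangle functors extending $\theta_0$. The first step is purely formal and, just as in the hereditary case above, uses neither (1) nor (2): the compatibility of $\theta$ with the translation and with $\omega$ forces $\theta_{\Sigma^n(C)}=\Sigma^n((\theta_0)_C)\circ\omega^n_C$ for $C\in\mathcal{A}$ and $n\in\mathbb{Z}$, and since a morphism $\Sigma^n(A)\to\Sigma^m(B)$ between stalk complexes vanishes unless $m\geq n$, hence equals $\Sigma^n(\xi)$ for some $\xi\colon A\to\Sigma^{m-n}(B)$ with $m-n\geq 0$, Proposition \ref{prop:D-par-nat} together with the cocycle identities for $\omega^\bullet$ from Subsection 2.1 shows that $\theta$ is a well-defined natural transformation on the full subcategory $\bigcup_{n\in\mathbb{Z}}\Sigma^n(\mathcal{A})$ of stalk complexes.

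The real work is to extend $\theta$ to all of $\mathbf{D}^b(\mathcal{A})$, and this is where Orlov's technique and the hypotheses enter. First I would import an ample sequence: by (1) choose $P_i\in\mathcal{A}$ whose images $Q_i:=G(P_i)\in\mathcal{B}$ form an ample sequence. Because $G$ is an equivalence, ${\rm Hom}_{\mathbf{D}^b(\mathcal{A})}(P_i,\Sigma^j(Y))\cong{\rm Ext}^j_\mathcal{B}(Q_i,G(Y))$ for $Y\in\mathcal{A}\cap G^{-1}(\mathcal{B})$, which by ampleness vanishes for all $j\neq 0$ once $i$ is sufficiently negative relative to $G(Y)$; thus the $P_i$ play the role of projective objects against the objects of $\mathcal{A}\cap G^{-1}(\mathcal{B})$. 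By (2) every $X\in\mathcal{A}$ admits an epimorphism from such an object, hence, iterating and invoking the vanishing just noted, a left resolution whose terms are finite direct sums of the $P_i$; combining with the good-truncation triangles, every object of $\mathbf{D}^b(\mathcal{A})$ is built from finitely many such resolutions. This reproduces, inside $\mathbf{D}^b(\mathcal{A})$, the resolution-by-ample-objects mechanism of \cite[Proposition 2.16]{Or}.

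Given a resolution $P_\bullet\to X$ of this kind, I would define $\theta_X\colon F(X)\to X$ as follows: applying $F$ and using the isomorphisms $F(P_i)\cong P_i$ provided by $\theta_0$, the already-constructed $\theta$ on stalk complexes yields a morphism from (an ample resolution of) $F(X)$ to (the ample resolution of) $X$, and hence, since the $P_i$ are relatively projective, a morphism $\theta_X\colon F(X)\to X$ in $\mathbf{D}^b(\mathcal{A})$. Following Orlov's argument one then checks the four points: $\theta_X$ is independent of the chosen resolution (two ample resolutions are linked by maps that are isomorphisms against the relevant test objects, by the relative projectivity); $\theta_X$ is natural in $X$ (lift an arbitrary morphism to a morphism of resolutions); $\theta$ is compatible with $\omega$, i.e.\ is a transformation of triangle functors; and $\theta$ restricts to $\theta_0$. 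Once $\theta$ exists, ${\rm Iso}(\theta)$ is a triangulated subcategory (Lemma \ref{lem:iso}) containing the generating subcategory $\mathcal{A}$, hence is all of $\mathbf{D}^b(\mathcal{A})$, so $\theta$ is automatically an isomorphism.

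For uniqueness, if $\theta$ and $\theta'$ both extend $\theta_0$, then $\theta\circ\theta'^{-1}\in Z_\vartriangle(\mathbf{D}^b(\mathcal{A}))$ restricts to the identity on $\mathcal{A}$, hence on every $\Sigma^n(\mathcal{A})$, and the same resolution argument forces it to be the identity on all objects --- equivalently, the homomorphism ${\rm res}$ in (\ref{equ:CD}) is injective, as required by Lemma \ref{lem:Dcenter}. The final ``in particular'' clause is the special case $\mathcal{B}=\mathcal{A}$, $G={\rm Id}_{\mathbf{D}^b(\mathcal{A})}$, in which (1) is the assumption and (2) holds trivially. The hard part, in my view, is precisely the construction of $\theta_X$ in the third paragraph: the well-definedness, naturality and triangle-compatibility of the extension, which amount to the delicate bookkeeping with resolutions by ample objects at the heart of \cite[Proposition 2.16]{Or}; the passage from stalk complexes via Proposition \ref{prop:D-par-nat}, the upgrade to an isomorphism via Lemma \ref{lem:iso}, and the uniqueness are then comparatively routine.
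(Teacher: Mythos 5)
Your overall architecture (build $\theta$ on stalk complexes via Proposition \ref{prop:D-par-nat}, extend by an Orlov-type resolution argument, upgrade to an isomorphism via Lemma \ref{lem:iso}, get uniqueness from injectivity of ${\rm res}$) is reasonable, but the central step has a genuine gap. You propose to transport the ample sequence into $\mathcal{A}$ by setting $P_i=G^{-1}(Q_i)$ and then to ``reproduce, inside $\mathbf{D}^b(\mathcal{A})$, the resolution-by-ample-objects mechanism.'' This does not work as stated: the $P_i$ need not form an ample sequence in $\mathcal{A}$, because an epimorphism $Q_i^n\rightarrow G(Y)$ in $\mathcal{B}$ is only carried by $G^{-1}$ to a morphism $P_i^n\rightarrow Y$ whose cone is $G^{-1}$ of a shifted object of $\mathcal{B}$ --- $G^{-1}$ does not restrict to an exact functor between the hearts, so this morphism need not be an epimorphism in $\mathcal{A}$. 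Condition (2) does not repair this: it supplies epimorphisms from objects of $\mathcal{A}\cap G^{-1}(\mathcal{B})$, not from direct sums of the $P_i$, so the claimed ``left resolution whose terms are finite direct sums of the $P_i$'' is not available. On top of this, the entire hard part (well-definedness, naturality, and triangle-compatibility of $\theta_X$) is deferred to ``following Orlov's argument'' rather than carried out, and Orlov's argument is being invoked in a setting where its hypotheses are not literally satisfied.

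The paper's proof sidesteps both problems by conjugating the functor rather than transporting the sequence: it applies \cite[Proposition 2.16]{Or} verbatim in $\mathbf{D}^b(\mathcal{B})$ to the autoequivalence $GFG^{-1}$, whose restriction to $G(\mathcal{A})\cap\mathcal{B}$ is trivialized by $G\theta_0G^{-1}$, obtaining a unique isomorphism $\eta\colon GFG^{-1}\rightarrow{\rm Id}_{\mathbf{D}^b(\mathcal{B})}$; then $\theta=G^{-1}\eta G$ is defined on all of $\mathbf{D}^b(\mathcal{A})$ at once. The only place condition (2) enters is the final, comparatively easy check that $\theta|_\mathcal{A}=\theta_0$: a priori $\theta$ agrees with $\theta_0$ only on $\mathcal{A}\cap G^{-1}(\mathcal{B})$, and one extends this to all of $\mathcal{A}$ by choosing a presentation $Q\rightarrow P\rightarrow X\rightarrow 0$ with $P,Q\in\mathcal{A}\cap G^{-1}(\mathcal{B})$ and comparing the induced diagrams. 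If you want to keep your outline, replace the second and third paragraphs by this conjugation step; your first and last paragraphs then become unnecessary for existence (they are subsumed by Orlov's statement and its uniqueness clause \cite[2.16.6]{Or}), though the uniqueness discussion via Lemma \ref{lem:Dcenter} survives essentially intact.
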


\begin{proof}
 Assume that $(F, \omega)$ and $\theta_0$ be as in Definition \ref{defn:2}. We observe that the triangle autoequivalence $GFG^{-1}$ on $\mathbf{D}^b(\mathcal{B})$ restricts to the identity endofunctor on $G(\mathcal{A})\cap \mathcal{B}$, via the isomorphism $G\theta_0G^{-1}$. Using the ample sequence contained in $G(\mathcal{A})\cap \mathcal{B}$, we apply \cite[Proposition 2.16]{Or} to obtain a unique isomorphism $\eta\colon GFG^{-1}\rightarrow {\rm Id}_{\mathbf{D}^b(\mathcal{B})}$ extending the isomorphism $G\theta_0 G^{-1}$ on $G(\mathcal{A})\cap \mathcal{B}$, where the uniqueness is proved in \cite[2.16.6]{Or}. Then the isomorphism $\theta=G^{-1}\eta G\colon F\rightarrow {\rm Id}_{\mathbf{D}^b(\mathcal{A})}$ extends  $\theta_0|_{\mathcal{A}\cap G^{-1}(\mathcal{B})}$. It indeed extends $\theta_0$ by (2) and a standard argument.

In more details, for any object $X\in \mathcal{A}$, we take an exact sequence $Q\stackrel{f}\rightarrow P\stackrel{g}\rightarrow X\rightarrow 0$ with $P, Q\in\mathcal{A}\cap G^{-1}(\mathcal{B})$. Then we have the following commutative exact diagram
\[\xymatrix{
F(Q) \ar[d]^-{\theta_Q}\ar[r]^-{F(f)} & F(P)\ar[d]^-{\theta_P} \ar[r]^-{F(g)} & F(X)\ar[d]^-{\theta_X}\ar[r] & 0\\
Q\ar[r]^-{f} & P\ar[r]^-{g} & X \ar[r] & 0.
}\]
Since $\theta_Q=(\theta_0)_Q$ and $\theta_P=(\theta_0)_P$, we infer that $\theta_X=(\theta_0)_X$.
\end{proof}

\subsection{Standard equivalences}

In this subsection, $k$ will be a field. For a finite dimensional $k$-algebra $A$, we denote by $A\mbox{-mod}$ the abelian category of finite dimensional left $A$-modules. Let $B$ be another finite dimensional $k$-algebra. The two algebras $A$ and $B$ are \emph{derived equivalent} (over $k$), provided that there is a $k$-linear triangle equivalence $(F, \omega)\colon \mathbf{D}^b(A\mbox{-mod})\rightarrow \mathbf{D}^b(B\mbox{-mod})$.

For any $B$-$A$-bimodule $_BM_A$, we always require that $k$ acts centrally. Recall that a bounded complex $_BX_A$ of $B$-$A$-bimodules is a \emph{two-sided tilting complex},  if the derived tensor functor $X\otimes_A^\mathbb{L}-\colon \mathbf{D}^b(A\mbox{-mod})\rightarrow \mathbf{D}^b(B\mbox{-mod})$ is an equivalence. We observe that $X\otimes_A^\mathbb{L}-$ is a triangle functor with a canonical connecting isomorphism.

Following \cite[Definition 3.4]{Ric}, a $k$-linear triangle equivalence $(F, \omega)\colon  \mathbf{D}^b(A\mbox{-mod})\rightarrow \mathbf{D}^b(B\mbox{-mod})$ is \emph{standard}, if it is isomorphic, as triangle functors,  to the derived tensor product $X\otimes^\mathbb{L}_A-$ for some two-sided tilting complex $X$. We mention that standard derived equivalences are closed under composition and quasi-inverse; for details, see \cite[6.5.2]{Zim}.

For a $k$-algebra automorphism $\sigma$ on $A$, we denote by $_\sigma A_1=A$ the $A$-$A$-bimodule with the left $A$-action twisted by $\sigma$; such a bimodule is a two-sided tilting complex. Recall that a $k$-linear autoequivalence $F\colon A\mbox{-mod}\rightarrow A\mbox{-mod}$ satisfying $F(A)\simeq A$ is necessarily isomorphic to the tensor functor ${_\sigma A_1}\otimes_A-$ for some automorphism $\sigma$.

In what follows, we suppress the connecting isomorphism for a triangle functor. The following result is essentially due to \cite[Corollary 3.5]{Ric}.

\begin{prop}\label{prop:factor}
Let $F\colon \mathbf{D}^b(A\mbox{-{\rm mod}})\rightarrow \mathbf{D}^b(B\mbox{-{\rm mod}})$ be a $k$-linear triangle equivalence. Then there exist  a pseudo-identity $F_1$ on $\mathbf{D}^b(A\mbox{-{\rm mod}})$  and a standard equivalence $F_2\colon \mathbf{D}^b(A\mbox{-{\rm mod}})\rightarrow \mathbf{D}^b(B\mbox{-{\rm mod}})$ such that $F$ is isomorphic to $F_2F_1$ as triangle functors.

 Such a factorization is unique. More precisely, if $F'_1$ is a pseudo-identity on $\mathbf{D}^b(A\mbox{-{\rm mod}})$ and $F'_2 \colon  \mathbf{D}^b(A\mbox{-{\rm mod}})\rightarrow \mathbf{D}^b(B\mbox{-{\rm mod}})$ is a standard equivalence such that $F$ is isomorphic to $F'_2F'_1$, then $F_i$ and $F'_i$ are isomorphic for $i=1, 2$.
\end{prop}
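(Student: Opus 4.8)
The plan is to invoke the result of Rickard cited as \cite[Corollary 3.5]{Ric} in two distinct ways: once to produce the factorization, and once — applied to a suitable autoequivalence — to force the uniqueness. First I would recall precisely what that corollary gives: for any $k$-linear triangle equivalence $F\colon \mathbf{D}^b(A\mbox{-mod})\to \mathbf{D}^b(B\mbox{-mod})$, there is a standard equivalence $F_2$ that acts on objects in exactly the same way as $F$ does (up to the identifications coming from the ample-type rigidity of module categories — or, more precisely, Rickard shows one can choose a two-sided tilting complex realizing the same map on objects). Then $F_1 := F_2^{-1}F$ is a triangle autoequivalence of $\mathbf{D}^b(A\mbox{-mod})$ which acts as the identity on objects. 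To see that $F_1$ can be taken to be a genuine pseudo-identity, I would use the adjusting-isomorphism technique of Lemma~\ref{lem:iso-tri}: after fixing isomorphisms $F_1(X)\cong X$ that are the identity on each $\Sigma^n(\mathcal{A})$ (which is possible since $F_1$ restricted to $\mathcal{A}$, being an autoequivalence fixing all isomorphism classes, is isomorphic to ${\rm Id}_\mathcal{A}$ by Lemma~\ref{lem:D-res}, hence to a pseudo-identity after adjustment via Corollary~\ref{cor:D-pseudo}), we replace $F_1$ by an isomorphic triangle functor that literally fixes every object and restricts to the identity on each $\Sigma^n(\mathcal{A})$. This yields $F\simeq F_2F_1$ with $F_1$ a pseudo-identity and $F_2$ standard, proving existence.

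For uniqueness, suppose $F\simeq F_2F_1\simeq F_2'F_1'$ with $F_1,F_1'$ pseudo-identities and $F_2,F_2'$ standard. Then the triangle autoequivalence $G := (F_2')^{-1}F_2$ of... — careful, the source of $F_2$ is $\mathbf{D}^b(A\mbox{-mod})$ and its target is $\mathbf{D}^b(B\mbox{-mod})$, so $(F_2')^{-1}F_2$ is an autoequivalence of $\mathbf{D}^b(A\mbox{-mod})$, and it satisfies $G\simeq F_1'F_1^{-1}$. Since $F_1$ and $F_1'$ are pseudo-identities, they act as the identity on objects, and $G$ does too; moreover $G$ restricts on each stalk subcategory $\Sigma^n(\mathcal{A})$ to a functor isomorphic to the identity (indeed $F_1'F_1^{-1}$ does). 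On the other hand $G = (F_2')^{-1}F_2$ is standard, being a composite of standard equivalences (closure under composition and quasi-inverse, \cite[6.5.2]{Zim}). The key point is then: \emph{a standard autoequivalence of $\mathbf{D}^b(A\mbox{-mod})$ that fixes every object is isomorphic to the identity functor}. This is again \cite[Corollary 3.5]{Ric} (or Rickard's uniqueness-of-tilting-complex statement): a two-sided tilting complex $X$ with $X\otimes^{\mathbb L}_A M\cong M$ for all $M$, in particular $X\otimes^{\mathbb L}_A A\cong A$, must be isomorphic in $\mathbf{D}^b(A\otimes_k A^{\rm op})$ to a twist bimodule ${_\sigma A_1}$, and the condition that it fixes all objects forces the tensor functor to be isomorphic to the identity. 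Hence $G\simeq {\rm Id}$, so $F_2\simeq F_2'$; and then $F_1'\simeq G^{-1}\simeq{\rm Id}$ composed appropriately gives $F_1\simeq F_1'$ — more carefully, from $F_2F_1\simeq F_2'F_1'$ and $F_2\simeq F_2'$ we cancel $F_2$ (it is an equivalence) to get $F_1\simeq F_1'$.

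The main obstacle I anticipate is the passage from "$F_1$ acts as the identity on objects and is a triangle autoequivalence" to "$F_1$ is \emph{isomorphic} to an actual pseudo-identity as triangle functors." This requires two things: first, that $F_1|_\mathcal{A}$ is isomorphic to ${\rm Id}_\mathcal{A}$ — this follows because an autoequivalence of an abelian category fixing every isomorphism class of objects, while not automatically isomorphic to the identity in general, is so here since $F_1|_\mathcal{A}$ arises from restricting a derived equivalence built by Rickard to agree with $F$ on objects, and one checks directly it is the identity on objects and can be normalized; alternatively, one works with the adjusting isomorphisms of Corollary~\ref{cor:D-pseudo} directly. Second, one must track the connecting isomorphism $\omega$ carefully through the adjustment, which is exactly the content of Lemma~\ref{lem:iso-tri} applied with the chosen adjusting isomorphisms $\delta_X$ — on stalk complexes $\Sigma^n(A)$ one takes $\delta_{\Sigma^n(A)} = \Sigma^n((\theta_0)_A)\circ \omega^n_A$ where $\theta_0\colon F_1|_\mathcal{A}\to{\rm Id}_\mathcal{A}$ is the chosen isomorphism, exactly as in the proof of Corollary~\ref{cor:D-pseudo}. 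The rest — closure of standard equivalences under composition and inverse, and the identification of object-fixing standard autoequivalences with the identity — is a direct appeal to \cite{Ric} and \cite{Zim}, so the proof is essentially a careful bookkeeping around those cited results.
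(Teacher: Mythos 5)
Your overall strategy coincides with the paper's: factor off Rickard's standard equivalence agreeing with $F$ on objects, adjust the remaining autoequivalence to a pseudo-identity via Corollary \ref{cor:D-pseudo}, and for uniqueness reduce to the statement that a standard pseudo-identity is isomorphic to the identity. However, there is a recurring gap in the passage from ``acts as the identity on objects up to isomorphism'' to ``restricts on the heart to a functor isomorphic to ${\rm Id}_{A\mbox{-}{\rm mod}}$''. These are not the same thing. After \cite[Corollary 3.5]{Ric} you only know $F_1(M)\cong M$ for every object $M$; hence $F_1|_{A\mbox{-}{\rm mod}}$ is an autoequivalence sending $A$ to something isomorphic to $A$, so it is isomorphic to a twist ${_\sigma A_1}\otimes_A-$ for some algebra automorphism $\sigma$ --- and such a twist is isomorphic to the identity \emph{functor} only when $\sigma$ is inner. (For the dual numbers $k[\varepsilon]$ and $\sigma(\varepsilon)=c\varepsilon$ with $c\neq 1$, the twist fixes every module up to isomorphism yet is not isomorphic to ${\rm Id}$.) So Corollary \ref{cor:D-pseudo} cannot be applied to your $F_1$ as it stands. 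The missing step --- which is exactly what the paper's proof supplies --- is to compose further with the standard equivalence $H=\mathbf{D}^b({_\sigma A_1}\otimes-)$ and absorb it into $F_2$ (which stays standard, since standard equivalences are closed under composition); only after this extra twist is the restriction of the remaining factor to $A\mbox{-mod}$ isomorphic to the identity functor. Your phrase ``can be normalized'' is precisely this composition, but it must actually be carried out.

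The same conflation infects the uniqueness argument: the principle you invoke, ``a standard autoequivalence of $\mathbf{D}^b(A\mbox{-mod})$ that fixes every object is isomorphic to the identity'', is false in general (same counterexample). What is true, and what the paper isolates as Lemma \ref{lem:stan-pseudo}, is that a standard equivalence which is a \emph{pseudo-identity} --- so that its restriction to $A\mbox{-mod}$ equals the identity functor, not merely fixes objects --- is isomorphic to ${\rm Id}$: one needs the functor isomorphism $X\otimes_A-\cong{\rm Id}_{A\mbox{-}{\rm mod}}$ (not just $X\otimes_A M\cong M$ for each $M$) to conclude that $X\cong{_AA_A}$ as bimodules. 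Your $G\simeq F_1'F_1^{-1}$ does satisfy this stronger hypothesis, because $F_1$ and $F_1'$ are pseudo-identities, so the conclusion you draw is correct; but the justification as written rests on a false statement. With these two repairs your argument becomes essentially the paper's proof.
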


\begin{proof}
We observe that $F(A)$ is a one-sided tilting complex of $B$-modules. By \cite[Proposition 3.1]{Ric}, there is a two-sided tilting complex $X$ of $B$-$A$-bimodules with an isomorphism $X\rightarrow F(A)$ in $\mathbf{D}^b(B\mbox{-mod})$. Write $G$ for a quasi-inverse of $X\otimes_A^\mathbb{L}-$. It follows that $GF(A)\simeq A$ and then we have $GF(A\mbox{-mod})=A\mbox{-mod}$.

For the restricted equivalence $GF|_{A\mbox{-{\rm mod}}}\colon A\mbox{-mod}\rightarrow A\mbox{-mod}$, there exist an automorphism $\sigma$ on $A$ such that $GF|_{A\mbox{-mod}}$ is quasi-inverse to $_\sigma A_1\otimes -$. Denote by $H=\mathbf{D}^b({_\sigma A_1}\otimes-)\colon \mathbf{D}^b(A\mbox{-{\rm mod}})\rightarrow \mathbf{D}^b(A\mbox{-{\rm mod}})$ the induced equivalence, which is a standard equivalence. By Corollary \ref{cor:D-pseudo},  the composition $HGF$ is isomorphic to a pseudo-identity $F_1$ on $\mathbf{D}^b(A\mbox{-{\rm mod}})$, since its restriction to $A\mbox{-mod}$ is isomorphic to the identity functor. Set $F_2$ to be a quasi-inverse of $HG$, which is standard. Then we have the required factorization.

For the uniqueness, we observe that $F'_1F_1^{-1}$ is a pseudo-identity on $\mathbf{D}^b(A\mbox{-mod})$ and is isomorphic to $(F'_2)^{-1}F_2$. It follows that ${F'_1}{F_1}^{-1}$ is standard. Then we are done by Lemma \ref{lem:stan-pseudo} below.
\end{proof}

\begin{lem}\label{lem:stan-pseudo}
Let $F\colon \mathbf{D}^b(A\mbox{-{\rm mod}})\rightarrow \mathbf{D}^b(A\mbox{-{\rm mod}})$ be a pseudo-identity. Assume that $F$ is standard. Then there is a natural isomorphism $F\rightarrow {\rm Id}_{\mathbf{D}^b(A\mbox{-{\rm mod}})}$ of triangle functors, whose restriction to $A\mbox{-{\rm mod}}$ is the identity.
\end{lem}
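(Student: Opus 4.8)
The statement asserts that a standard pseudo-identity $F \simeq X\otimes_A^{\mathbb L}-$ on $\mathbf{D}^b(A\mbox{-mod})$ is isomorphic to the identity by an isomorphism that restricts to the identity on $A\mbox{-mod}$. Since $F$ is a pseudo-identity, $F(X)=X$ on objects and $F$ restricts to the identity on each $\Sigma^n(A\mbox{-mod})$; in particular $F$ fixes the regular module $A$ itself, so the two-sided tilting complex $X$ satisfies $X\simeq A$ in $\mathbf{D}^b(B\mbox{-mod})$ with $B=A$. The first step is to promote this to a statement about bimodules: a two-sided tilting complex over $A$ whose underlying one-sided complex (forgetting one of the $A$-actions) is isomorphic to $A$ must be isomorphic, as a complex of bimodules, to $_\sigma A_1$ for some $k$-algebra automorphism $\sigma$ of $A$. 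This is standard (it follows from the theory in \cite{Ric}, or from the fact that $\mathrm{RHom}_A(X,X)\simeq A$ and that $X$ is concentrated in degree zero and invertible, hence a twisted bimodule). So up to isomorphism of triangle functors, $F \simeq \mathbf{D}^b(\,_\sigma A_1 \otimes_A -\,)$.

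\textbf{Next step: show $\sigma$ is inner modulo the pseudo-identity condition.} The functor $_\sigma A_1 \otimes_A -$ restricted to $A\mbox{-mod}$ is the twist functor $M \mapsto {}_\sigma M$. But $F$ is a pseudo-identity, so its restriction to $A\mbox{-mod}$ \emph{equals} (not just is isomorphic to) the identity functor. An isomorphism of functors $\mathbf{D}^b(\,_\sigma A_1\otimes_A-\,) \to F$ restricts to a natural isomorphism between the twist-by-$\sigma$ functor and ${\rm Id}_{A\mbox{-mod}}$; such a natural isomorphism is given by left multiplication by a unit $u \in A^{\times}$ with $\sigma(a) = u a u^{-1}$ for all $a$, i.e.\ $\sigma$ is inner. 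Then $_\sigma A_1 \cong A$ as bimodules via $a \mapsto au$ (or $u^{-1}a$, up to convention), so the derived tensor functor $\mathbf{D}^b(\,_\sigma A_1\otimes_A-\,)$ is already isomorphic, as a triangle functor, to ${\rm Id}_{\mathbf{D}^b(A\mbox{-mod})}$ via $\mathbf{D}^b$ of that bimodule isomorphism.

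\textbf{Final step: arrange the restriction to be the identity.} Having produced a natural isomorphism $\psi\colon F \to {\rm Id}_{\mathbf{D}^b(A\mbox{-mod})}$ of triangle functors, I still need its restriction to $A\mbox{-mod}$ to be the \emph{identity} transformation, not merely an isomorphism. The restriction $\psi|_{A\mbox{-mod}}$ is a natural automorphism of ${\rm Id}_{A\mbox{-mod}}$, hence an invertible central element $\mu \in Z(A\mbox{-mod}) = Z(A)$. Replacing $\psi$ by $\mathbf{D}^b(\mu^{-1}) \circ \psi$ — where $\mathbf{D}^b(\mu^{-1})$ is the induced central natural transformation of the identity functor, which is a morphism of triangle functors since central natural transformations commute with $\Sigma$ — yields the desired isomorphism whose restriction to $A\mbox{-mod}$ is ${\rm Id}$. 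This is exactly the trick already used in Lemma \ref{lem:K-pseudo} and Lemma \ref{lem:D-pseudo}.

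\textbf{Main obstacle.} The only genuinely non-formal input is the rigidity statement in the first step: that a two-sided tilting complex over $A$ which is a stalk (degree-zero) invertible bimodule after forgetting one side must be a twisted regular bimodule $_\sigma A_1$. I expect to cite \cite{Ric} (and the bimodule lifting results used in the proof of Proposition \ref{prop:factor}) for this. Everything after that — identifying the natural isomorphism with multiplication by a unit, checking innerness, and correcting by a central unit — is routine and parallels the arguments already given for Lemmas \ref{lem:K-pseudo} and \ref{lem:D-pseudo}.
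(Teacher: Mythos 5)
Your proposal is correct and follows essentially the same route as the paper: reduce to the case where the two-sided tilting complex is a bimodule concentrated in degree zero, use the fact that the restriction of $F$ to $A\mbox{-mod}$ is the identity to force the bimodule to be (isomorphic to) the regular bimodule, and then correct the resulting isomorphism by a central unit exactly as in Lemma \ref{lem:D-pseudo}. The only cosmetic difference is that you pass through the twisted bimodule $_\sigma A_1$ and the innerness of $\sigma$, whereas the paper argues directly that $X_A$ is projective and that a bimodule whose tensor functor is isomorphic to the identity on $A\mbox{-mod}$ must be $_AA_A$; these are two phrasings of the same Morita-theoretic rigidity.
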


\begin{proof}
Assume that $F\simeq X\otimes_A^\mathbb{L}-$ for a two-sided tilting complex  $X$ of $A$-$A$-bimodules. By $X\otimes_A^\mathbb{L}A\simeq A$, we infer that $_AX_A$ is isomorphic to  a stalk complex concentrated in  degree zero. So we view $X$ as an $A$-$A$-bimodule, where $_AX$ is isomorphic to $_AA$ as a left $A$-module.

Since $X\otimes_A^\mathbb{L} M\simeq M$ for any $A$-module $M$, we infer that $X_A$ is projective as a right $A$-module. Hence, we have $F\simeq \mathbf{D}^b(X\otimes_A-)$, whose restriction to $A\mbox{-mod}$ is the tensor functor $X\otimes_A-$. Recall that the restriction $F$ to $A\mbox{-mod}$ is the identity functor. It follows that $X$ is isomorphic to the regular bimodule $_AA_A$. Therefore, $\mathbf{D}^b(X\otimes_A-)$ is isomorphic to the identity functor on $\mathbf{D}^b(A\mbox{-mod})$.

In summary, we have proved that $F$ is isomorphic to ${\rm Id}_{\mathbf{D}^b(A\mbox{-{\rm mod}})}$, as triangle functors. By Lemma \ref{lem:D-pseudo}, we are done.
\end{proof}

The following result actually motivates our study of $\mathbf{D}$-standard categories.

\begin{thm}\label{thm:2}
Let $A$ be a finite dimensional $k$-algebra. Then the following statements are equivalent.
\begin{enumerate}
\item The module category $A\mbox{-{\rm mod}}$ is $\mathbf{D}$-standard over $k$.
\item Any $k$-linear derived equivalence $\mathbf{D}^b(A\mbox{-{\rm mod}})\rightarrow \mathbf{D}^b(B\mbox{-{\rm mod}})$ is standard.
    \item Any $k$-linear derived equivalence $\mathbf{D}^b(A\mbox{-{\rm mod}})\rightarrow \mathbf{D}^b(A\mbox{-{\rm mod}})$ is standard.
\end{enumerate}
\end{thm}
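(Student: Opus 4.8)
The plan is to prove the cycle of implications $(1)\Rightarrow(2)\Rightarrow(3)\Rightarrow(1)$, using the factorization result (Proposition \ref{prop:factor}) as the main engine. Throughout I will suppress connecting isomorphisms, as the paper does.

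First I would treat $(1)\Rightarrow(2)$. Let $F\colon\mathbf{D}^b(A\mbox{-mod})\rightarrow\mathbf{D}^b(B\mbox{-mod})$ be a $k$-linear derived equivalence. By Proposition \ref{prop:factor} there is a factorization $F\simeq F_2F_1$ with $F_1$ a pseudo-identity on $\mathbf{D}^b(A\mbox{-mod})$ and $F_2$ standard. Since $A\mbox{-mod}$ is $\mathbf{D}$-standard, Lemma \ref{lem:D-stan}(3) gives that $F_1$ is isomorphic, as a triangle functor, to $\mathrm{Id}_{\mathbf{D}^b(A\mbox{-mod})}$. Hence $F\simeq F_2$ is standard. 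The implication $(2)\Rightarrow(3)$ is immediate, taking $B=A$.

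The substantive step is $(3)\Rightarrow(1)$, and this is where I expect the main obstacle to lie. Assume every $k$-linear derived autoequivalence of $\mathbf{D}^b(A\mbox{-mod})$ is standard. By Lemma \ref{lem:D-stan} it suffices to show that any $k$-linear pseudo-identity $(F,\omega)$ on $\mathbf{D}^b(A\mbox{-mod})$ is isomorphic to $\mathrm{Id}_{\mathbf{D}^b(A\mbox{-mod})}$ as triangle functors. By Corollary \ref{cor:D-pseudo} a pseudo-identity is in particular a triangle autoequivalence, so by hypothesis $(F,\omega)$ is standard. Now Lemma \ref{lem:stan-pseudo} applies verbatim: a pseudo-identity that is standard is isomorphic to the identity functor, with restriction to $A\mbox{-mod}$ the identity transformation. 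This yields condition (3) of Lemma \ref{lem:D-stan}, hence $A\mbox{-mod}$ is $\mathbf{D}$-standard.

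In summary, the proof is essentially a bookkeeping argument assembling Proposition \ref{prop:factor}, Lemma \ref{lem:stan-pseudo}, Corollary \ref{cor:D-pseudo} and Lemma \ref{lem:D-stan}; the one place requiring genuine care is checking that the quasi-inverse and the twist-by-automorphism functor appearing in the factorization are themselves standard, so that ``pseudo-identity factor is trivial'' really is equivalent to ``$F$ is standard'' — but this is exactly what Proposition \ref{prop:factor} and the closure of standard equivalences under composition and quasi-inverse provide. I expect no hidden difficulty beyond correctly tracking which functors are standard at each stage.
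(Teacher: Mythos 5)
Your proposal is correct and follows essentially the same route as the paper: $(1)\Rightarrow(2)$ via the factorization of Proposition \ref{prop:factor} together with Lemma \ref{lem:D-stan}, and $(3)\Rightarrow(1)$ by observing that a pseudo-identity is an autoequivalence, hence standard by hypothesis, hence trivial by Lemma \ref{lem:stan-pseudo}. No gaps.
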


\begin{proof}
By combining Proposition \ref{prop:factor} and Lemma \ref{lem:D-stan}, we have $(1)\Rightarrow (2)$. The implication $(2)\Rightarrow (3)$ is clear. For $(3)\Rightarrow (1)$, we apply Lemma \ref{lem:stan-pseudo} to obtain that any pseudo-identity on $\mathbf{D}^b(A\mbox{-{\rm mod}})$ is isomorphic to the identity functor. Then we are done by Lemma \ref{lem:D-stan}.
\end{proof}

It is an open question whether all $k$-linear derived equivalences are standard; see the remarks after \cite[Definition 3.4]{Ric}. In view of Theorem \ref{thm:2}, an affirmative answer is equivalent to  the following conjecture.

\begin{conj}\label{conj:main}
For any finite dimensional $k$-algebra $A$, the module category $A\mbox{-{\rm mod}}$ is $\mathbf{D}$-standard over $k$.
\end{conj}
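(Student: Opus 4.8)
Conjecture \ref{conj:main} is equivalent, via Theorem \ref{thm:2}, to the open question of \cite{Ric} on standardness of derived equivalences, so what we can offer is a strategy rather than a complete proof, together with an indication of where the decisive difficulty lies. The first and most natural step is to use Theorem \ref{thm:1} to reduce the conjecture to the following assertion: for every finite dimensional $k$-algebra $A$, the category $A\mbox{-proj}$ of finitely generated projective modules is $\mathbf{K}$-standard over $k$. By Lemma \ref{lem:K-stan}(3) this in turn amounts to showing that every $k$-linear pseudo-identity $(F,\omega)$ on $\mathbf{K}^b(A\mbox{-proj})$ is isomorphic, as a triangle functor, to the identity. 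This reformulation is attractive because $A\mbox{-proj}$ is $\mathrm{Hom}$-finite, Krull--Schmidt with finitely many indecomposables, split-idempotent, and --- decisively --- satisfies ${\rm Hom}_{\mathbf{K}^b(A\mbox{-proj})}(P,\Sigma^nQ)=0$ for projectives $P,Q$ and $n\neq 0$, which provides a foothold for inductive constructions that is absent when one works with $\mathbf{D}^b(A\mbox{-mod})$ directly.

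Granting this reduction, I would proceed as follows. Given a pseudo-identity $F$ on $\mathbf{K}^b(A\mbox{-proj})$, refine the proof of Proposition \ref{prop:K-pseudo} to produce, for each bounded complex $X$, an isomorphism $\theta_X\colon F(X)\to X$ that extends the prescribed identities on stalk complexes and is compatible with the brutal-truncation triangles (\ref{eqy:tri}); this is done by induction on the number of nonzero components of $X$, each step being an application of axiom (TR3). One thus obtains a candidate for the required natural transformation $\theta\colon F\to{\rm Id}$. The content of $\mathbf{K}$-standardness is then the \emph{naturality} of $\theta$, together with its compatibility with $\omega$: one must arrange the $\theta_X$ coherently over all morphisms at once. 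Using Lemma \ref{lem:nat} and the surjectivity of ${\rm res}$ in (\ref{equ:CK}), the failure of coherence is governed by obstruction classes in groups ${\rm Hom}_{\mathbf{K}^b(A\mbox{-proj})}(X,\Sigma^{-1}Y)$ assembled from the differentials of $X$ and $Y$; one would aim to kill them either by an Auslander--Reiten argument --- using Lemma \ref{lem:Ric} to pin $\theta$ down on indecomposables up to the scalars $\lambda_X$ attached to almost-vanishing morphisms, followed by a connectedness argument in the spirit of Proposition \ref{prop:ZZ}, applied one block at a time, forcing $\lambda_X=1$ --- or by adapting the ample-sequence technique behind Proposition \ref{prop:Orlov} to a ``homotopy ample sequence'' inside $\mathbf{K}^b(A\mbox{-proj})$, such as the family $\{\Sigma^n(A)\}_{n\in\mathbb{Z}}$ of shifted free modules.

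The main obstacle, in my view, is precisely that $\mathbf{K}^b(A\mbox{-proj})$ is not generated from $A\mbox{-proj}$ in a bounded number of triangulated steps --- complexes of arbitrarily large length occur --- so the nilpotence argument of Proposition \ref{prop:NK} for the kernel $\mathcal{N}$ of (\ref{equ:CK}) is unavailable, and the inductive choices of $\theta_X$ need not assemble into a single natural transformation: the obstruction at one stage of the induction can feed into the next, with no uniform bound to terminate the process. Overcoming this seems to require genuinely new input beyond the methods of the present paper; the cases where $A\mbox{-proj}$ is an Orlov category (Proposition \ref{prop:AR} and Example \ref{exm:1}), where an ample sequence is available (Proposition \ref{prop:Orlov}), and the direct analysis of the algebra of dual numbers in Section 7 (Theorem \ref{thm:exam1}) are the sources of evidence and the natural test cases against which any general approach should be measured.
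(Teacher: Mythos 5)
The statement you were asked to prove is labelled a \emph{Conjecture} in the paper, and the paper offers no proof of it: it is stated as being equivalent (via Theorem \ref{thm:2}) to Rickard's open question on whether every derived equivalence is standard, and the paper only confirms it in special cases (Proposition \ref{prop:AR} with Example \ref{exm:1}, Proposition \ref{prop:Orlov}, the (anti-)Fano and triangular cases, and the two worked examples of Section 7). You were right not to manufacture a ``proof''; acknowledging that only a strategy can be offered is the correct response here, and your identification of where the difficulty sits --- constructing the isomorphisms $\theta_X$ by induction on truncations is routine (this is exactly Proposition \ref{prop:K-pseudo}), whereas making them \emph{natural} and compatible with $\omega$ is the entire content of $\mathbf{K}$-standardness --- matches precisely what the paper's partial arguments (the proof of Theorem \ref{thm:1}, and the hands-on computations in Section 7) have to work hard for.

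Two cautions about the strategy itself. First, the reduction via Theorem \ref{thm:1} is only one-directional: $\mathbf{K}$-standardness of $A\mbox{-proj}$ implies $\mathbf{D}$-standardness of $A\mbox{-mod}$, but the converse is established in the paper only when every object has finite projective dimension. So for algebras of infinite global dimension you would be attacking a statement that is possibly strictly stronger than the conjecture, and which could conceivably be false even if the conjecture is true; the paper does not conjecture that $A\mbox{-proj}$ is always $\mathbf{K}$-standard. Second, your diagnosis that the nilpotence argument of Proposition \ref{prop:NK} is ``unavailable'' conflates two issues: that proposition concerns the kernel of the restriction map on triangle centers, which governs the \emph{uniqueness} of the extension (strong versus non-strong standardness, cf.\ Lemma \ref{lem:stan-center}), not its \emph{existence}. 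The dual-numbers example shows exactly this separation: there the center map fails to be injective, yet $\mathbf{K}$-standardness still holds by a direct rescaling argument on the spanning morphisms $i_{n,m}$, $\pi_{n,m}$, $c_{n,m,l}$. Any general attack would more plausibly follow that template --- find a spanning set of morphisms, normalize $F$ on it by adjusting isomorphisms, and invoke Lemma \ref{lem:iso-gen} --- than an obstruction-theoretic one.
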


On the other hand, it would be nice to have an explicit example of non-$\mathbf{D}$-standard abelian categories. We mention the work \cite{PV}, where the above open question is treated using filtered triangulated categories.

By the following result, it suffices to verify Conjecture \ref{conj:main} up to derived equivalences.

\begin{lem}\label{lem:conj}
Let $A$ and $B$ be two algebras which are derived equivalent.  Then $A\mbox{-{\rm mod}}$ is (resp. strongly) $\mathbf{D}$-standard if and only if $B\mbox{-{\rm mod}}$ is (resp. strongly) $\mathbf{D}$-standard.
\end{lem}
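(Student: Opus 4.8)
The plan is to transport the $\mathbf{D}$-standard property along a derived equivalence using conjugation of triangle autoequivalences, exactly as in the proof of Proposition \ref{prop:Orlov} but without the ample-sequence hypothesis. So let $G\colon \mathbf{D}^b(A\mbox{-mod})\rightarrow \mathbf{D}^b(B\mbox{-mod})$ be a $k$-linear triangle equivalence with quasi-inverse $G^{-1}$, and assume $A\mbox{-mod}$ is $\mathbf{D}$-standard. By the symmetry of the statement it suffices to prove $B\mbox{-mod}$ is $\mathbf{D}$-standard. By Lemma \ref{lem:D-stan} it is enough to show that any pseudo-identity $(F,\omega)$ on $\mathbf{D}^b(B\mbox{-mod})$ is isomorphic, as a triangle functor, to ${\rm Id}_{\mathbf{D}^b(B\mbox{-mod})}$. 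First I would form the conjugate $G^{-1}FG\colon \mathbf{D}^b(A\mbox{-mod})\rightarrow \mathbf{D}^b(A\mbox{-mod})$, a $k$-linear triangle autoequivalence. The point is to show this is isomorphic to the identity, and then conjugate the isomorphism back.

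The obstacle is that a pseudo-identity on $\mathbf{D}^b(B\mbox{-mod})$ need not restrict to the abelian subcategory $G(A\mbox{-mod})$ on the $A$-side; that is, $G^{-1}FG$ need not send $A\mbox{-mod}\subseteq\mathbf{D}^b(A\mbox{-mod})$ into itself, so one cannot directly invoke the $\mathbf{D}$-standardness of $A\mbox{-mod}$. To get around this I would use Proposition \ref{prop:factor} applied to the derived equivalence $H:=FG\colon \mathbf{D}^b(A\mbox{-mod})\rightarrow\mathbf{D}^b(B\mbox{-mod})$: write $H\simeq H_2H_1$ with $H_1$ a pseudo-identity on $\mathbf{D}^b(A\mbox{-mod})$ and $H_2$ a standard equivalence, and do the same for $G$ itself, writing $G\simeq G_2G_1$ with $G_1$ a pseudo-identity and $G_2$ standard. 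Then $F\simeq H G^{-1}\simeq H_2H_1G_1^{-1}G_2^{-1}$; since $A\mbox{-mod}$ is $\mathbf{D}$-standard, $H_1$ and $G_1$ are each isomorphic to ${\rm Id}_{\mathbf{D}^b(A\mbox{-mod})}$ (Lemma \ref{lem:D-stan} together with Lemma \ref{lem:D-pseudo}), so $F\simeq H_2G_2^{-1}$, a composition of standard equivalences and hence standard (using that standard equivalences are closed under composition and quasi-inverse, \cite[6.5.2]{Zim}). But $F$ is by hypothesis a pseudo-identity on $\mathbf{D}^b(B\mbox{-mod})$, so Lemma \ref{lem:stan-pseudo} gives a natural isomorphism $F\rightarrow{\rm Id}_{\mathbf{D}^b(B\mbox{-mod})}$ of triangle functors. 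By Lemma \ref{lem:D-stan} this proves $B\mbox{-mod}$ is $\mathbf{D}$-standard.

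For the strong version, by Lemma \ref{lem:Dcenter} it remains to show that the restriction homomorphism ${\rm res}\colon Z_\vartriangle(\mathbf{D}^b(B\mbox{-mod}))\rightarrow Z(B\mbox{-mod})$ in (\ref{equ:CD}) is an isomorphism, given the corresponding statement for $A$. The equivalence $G$ induces a $k$-algebra isomorphism $Z_\vartriangle(\mathbf{D}^b(A\mbox{-mod}))\xrightarrow{\sim}Z_\vartriangle(\mathbf{D}^b(B\mbox{-mod}))$ by conjugation, $\lambda\mapsto G\lambda G^{-1}$; one must check that this carries the kernel of ${\rm res}$ on the $A$-side onto the kernel on the $B$-side. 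Here the natural tool is the already-proved $\mathbf{D}$-standardness of both categories: by Lemma \ref{lem:Dcenter} it suffices to observe that under the identification of kernels with (iso-classes of) extensions of the identity, conjugation by $G$ is compatible with restriction, which follows because $G$ sends $A\mbox{-mod}$-stalks to the abelian subcategory $G(A\mbox{-mod})$ and one can compare via the standard factorizations above. Alternatively, and more cleanly, one can invoke the well-known fact (as in \cite[Theorem 2.5]{KY} and diagram (\ref{equ:center})) that $Z_\vartriangle(\mathbf{D}^b(-))$ and $Z(-)$ are both derived invariants and that ${\rm res}$ is natural with respect to derived equivalences, so that ${\rm res}$ being an isomorphism is itself a derived-invariant property. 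I expect the main subtlety to be bookkeeping: carefully tracking which isomorphisms of triangle functors are being composed in the factorization step, and ensuring the conjugation isomorphism on triangle centers genuinely identifies the two restriction maps rather than merely their sources and targets.
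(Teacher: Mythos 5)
Your argument for the plain $\mathbf{D}$-standard case is correct and is essentially the paper's own: the paper routes through Theorem \ref{thm:2} (for any autoequivalence $F$ of $\mathbf{D}^b(B\mbox{-mod})$, write $F\simeq (FG)G^{-1}$ with $G$ a standard equivalence $\mathbf{D}^b(A\mbox{-mod})\to\mathbf{D}^b(B\mbox{-mod})$, note $FG$ is standard because $A\mbox{-mod}$ is $\mathbf{D}$-standard, and conclude $F$ is standard), whereas you unwind that theorem's proof by hand via Proposition \ref{prop:factor} and Lemma \ref{lem:stan-pseudo}. The content is the same and your version is fine.

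The strong case is where you have a genuine gap. Your argument rests on the claim that the conjugation isomorphism $Z_\vartriangle(\mathbf{D}^b(A\mbox{-mod}))\to Z_\vartriangle(\mathbf{D}^b(B\mbox{-mod}))$, $\lambda\mapsto G\lambda G^{-1}$, carries $\ker({\rm res})$ into $\ker({\rm res})$, i.e.\ that ${\rm res}$ is ``natural with respect to derived equivalences.'' Neither \cite[Theorem 2.5]{KY} nor diagram (\ref{equ:center}) says this: both compare $\mathbf{D}^b(\mathcal{A})$ with $\mathbf{K}^b(\mathcal{P})$ for a \emph{fixed} abelian category, not across an equivalence that moves the heart. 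Concretely, for $\lambda$ with $\lambda|_{A\mbox{-}{\rm mod}}=0$ and a $B$-module $M$ one has $(G\lambda G^{-1})_M=G(\lambda_{G^{-1}(M)})$, where $G^{-1}(M)$ is an arbitrary complex; vanishing of $\lambda$ on stalk complexes does not force its vanishing on all complexes---that failure is precisely why $\ker({\rm res})$ can be nonzero, as for the dual numbers in Theorem \ref{thm:exam1}. The paper sidesteps the compatibility issue entirely with a dimension count: conjugation gives $Z_\vartriangle(\mathbf{D}^b(A\mbox{-mod}))\cong Z_\vartriangle(\mathbf{D}^b(B\mbox{-mod}))$, Rickard's \cite[Proposition 9.2]{Ric89} gives $Z(A\mbox{-mod})\cong Z(A)\cong Z(B)\cong Z(B\mbox{-mod})$, and since ${\rm res}$ for $B\mbox{-mod}$ is always surjective with source and target of the same finite dimension, it must be an isomorphism; no identification of the two restriction maps is needed, only of their sources and targets. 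You should replace your naturality claim with this dimension argument, or else supply an actual proof of the naturality, which is a separate and nontrivial task.
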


\begin{proof}
Assume that $A\mbox{-mod}$ is $\mathbf{D}$-standard. Take a standard derived  equivalence $G\colon \mathbf{D}^b(A\mbox{-mod})\rightarrow \mathbf{D}^b(B\mbox{-mod})$. For any triangle autoequivalence $F$ on $\mathbf{D}^b(B\mbox{-mod})$, in view of Theorem \ref{thm:2}(2),  we have that the composition $FG$ is standard. It follows that $F$ is standard, since it is isomorphic to $(FG)G^{-1}$, as the composition of two standard equivalences. This shows that $B\mbox{-mod}$ is $\mathbf{D}$-standard by Theorem \ref{thm:2}(3).

If  $A\mbox{-mod}$ is strongly $\mathbf{D}$-standard, the homomorphism (\ref{equ:CD}) for $A\mbox{-mod}$ is an isomorphism. By \cite[Proposition 9.2]{Ric89}, the centers $Z(A\mbox{-mod})$ and $Z(B\mbox{-mod})$ are isomorphic, since they are isomorphic to the centers $Z(A)$ and $Z(B)$ of the algebras, respectively. The triangle centers $Z_\vartriangle(\mathbf{D}^b(A\mbox{-mod}))$ and $Z_\vartriangle(\mathbf{D}^b(B\mbox{-mod}))$ are also isomorphic. By a dimension argument, the homomorphism (\ref{equ:CD}) for $B\mbox{-mod}$, which is always surjective,  is necessarily  an isomorphism. By Lemma \ref{lem:Dcenter}, the module category $B\mbox{-mod}$ is strongly $\mathbf{D}$-standard. Then we are done.
\end{proof}

In view of Lemma \ref{lem:conj} and Proposition \ref{prop:Orlov}, it is natural to ask the following general question: for two $k$-linear abelian categories $\mathcal{A}$ and $\mathcal{B}$  which are derived equivalent such that $\mathcal{A}$ is (\emph{resp}. strongly) $\mathbf{D}$-standard, so is $\mathcal{B}$?

Let us recall from \cite{MY, Mina, Chen} the cases where Conjecture \ref{conj:main} is actually confirmed. We mention that the case of a canonical algebra is studied in \cite[Lemma 6.6]{BKL}.

Following \cite[Definition 4.1]{Mina}, a finite dimensional algebra $A$ is \emph{Fano} (resp. \emph{anti-Fano}), if $A$ has finite global dimension and for some natural number $d$,  $\Sigma^{-d}(DA)$, as a two-sided tilting complex of $A$-$A$-bimodules, is anti-ample (\emph{resp}. ample) in the sense of \cite[Definition 3.4]{Mina}. Here, $DA={\rm Hom}_k(A, k)$.

\begin{prop}
Let $A$ be a finite dimensional $k$-algebra. Then $A\mbox{-{\rm mod}}$ is strongly $\mathbf{D}$-standard provided that $A$ is derived equivalent to a triangular algebra or a (anti-)Fano algebra.
\end{prop}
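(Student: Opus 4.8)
The plan is to reduce everything to the two cases already available in the literature via Lemma~\ref{lem:conj} and Proposition~\ref{prop:Orlov}. By Lemma~\ref{lem:conj}, being (strongly) $\mathbf{D}$-standard is a derived-invariant property, so it suffices to prove the statement when $A$ itself is triangular or (anti-)Fano. In the triangular case, the strategy is to pass from the module category to its projectives: by Example~\ref{exm:1}, the category $A\mbox{-proj}$ of finitely generated projective $A$-modules is an Orlov category, hence strongly $\mathbf{K}$-standard by Proposition~\ref{prop:AR}. One then invokes Theorem~\ref{thm:1} (which is proved in Section~6 of the paper, and which we are entitled to assume here) to conclude that $A\mbox{-mod}$ is $\mathbf{D}$-standard; for the \emph{strong} version, one additionally notes that the $\mathbf{K}$-standardness of $A\mbox{-proj}$ gives that the restriction map $\mathrm{res}\colon Z_\vartriangle(\mathbf{K}^b(A\mbox{-proj}))\to Z(A\mbox{-proj})$ is an isomorphism, and then transports this isomorphism along the commutative square~(\ref{equ:center}) — whose horizontal arrows are isomorphisms by \cite[Theorem 2.5]{KY} — to deduce that $\mathrm{res}\colon Z_\vartriangle(\mathbf{D}^b(A\mbox{-mod}))\to Z(A\mbox{-mod})$ is an isomorphism, so that Lemma~\ref{lem:Dcenter} upgrades $\mathbf{D}$-standard to strongly $\mathbf{D}$-standard.

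In the (anti-)Fano case, the strategy is to apply Proposition~\ref{prop:Orlov} directly. Here $A$ has finite global dimension, so $\mathbf{D}^b(A\mbox{-mod})$ is equivalent, as a triangulated category, to $\mathbf{K}^b(A\mbox{-proj})$; moreover by definition there is a natural number $d$ such that $\Sigma^{-d}(DA)$, viewed as a two-sided tilting complex, is anti-ample (resp.\ ample) in the sense of \cite[Definition 3.4]{Mina}. One uses the Serre-functor-type autoequivalence associated with $DA$ (and a suitable shift) to produce, out of the object $A$, an ample sequence $\{P_i\}_{i\in\mathbb{Z}}$ of objects of $\mathbf{D}^b(A\mbox{-mod})$ lying in $A\mbox{-mod}$ — this is exactly the content of the (anti-)Fano hypothesis, and it is the ``$\mathcal{A}$ has an ample sequence'' situation covered by the last sentence of Proposition~\ref{prop:Orlov}. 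That proposition then yields that $A\mbox{-mod}$ is strongly $\mathbf{D}$-standard. (If one prefers to stay closer to \cite{Mina}, one can instead feed the anti-ample/ample complex into conditions (1)--(2) of Proposition~\ref{prop:Orlov} with $\mathcal{B}=\mathcal{A}$ and $G$ a suitable power of the derived Nakayama functor.)

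The main obstacle I expect is the bookkeeping in the (anti-)Fano case: one must check carefully that the ampleness condition in the sense of \cite[Definition 3.4]{Mina} for the two-sided tilting complex $\Sigma^{-d}(DA)$ really does translate into the ample-sequence condition of \cite{Or} used in Proposition~\ref{prop:Orlov}, i.e.\ that the vanishing $\mathrm{Hom}(X,P_i)=0$ and $\mathrm{Ext}^j(P_i,X)=0$ for $j>0$ and $i\ll 0$, together with the existence of epimorphisms $P_i^{n}\twoheadrightarrow X$, can be extracted from the (anti-)ampleness of the Serre functor. This is essentially the comparison of two slightly different notions of ``ample'' and is where \cite{Mina} does the real work; once that dictionary is in place, the rest is a formal application of the results assembled above. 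The triangular case, by contrast, is immediate once Theorem~\ref{thm:1} is granted, the only minor point being the center computation needed for the word ``strongly'', which is handled by the square~(\ref{equ:center}).
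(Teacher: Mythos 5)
Your overall reduction and your treatment of the triangular case coincide with the paper's: reduce via Lemma~\ref{lem:conj}, then use Example~\ref{exm:1}, Proposition~\ref{prop:AR} and Theorem~\ref{thm:1}. One small imprecision there: it is the \emph{strong} $\mathbf{K}$-standardness of $A\mbox{-proj}$ (via Lemma~\ref{lem:stan-center}), not mere $\mathbf{K}$-standardness, that forces ${\rm res}\colon Z_\vartriangle(\mathbf{K}^b(A\mbox{-proj}))\to Z(A\mbox{-proj})$ to be an isomorphism --- the dual numbers (Theorem~\ref{thm:exam1}) show that $\mathbf{K}$-standardness alone does not suffice. Since Orlov categories are strongly $\mathbf{K}$-standard, your conclusion is nevertheless correct, and in fact the last sentence of Theorem~\ref{thm:1} already packages the whole transport along the square~(\ref{equ:center}), so you need not redo it.

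The genuine problem is in your primary route for the (anti-)Fano case. You propose to extract from the (anti-)Fano hypothesis an ample sequence of objects \emph{of $A\mbox{-mod}$} and then invoke the last sentence of Proposition~\ref{prop:Orlov}. This cannot work: as the paper observes right after the definition of an ample sequence, an abelian category admitting one has no nonzero projective objects, whereas $A\mbox{-mod}$ always has them (condition (2), ${\rm Hom}(X,P_i)=0$ for $i\ll 0$, is violated for $X$ projective because the epimorphism $P_i^n\twoheadrightarrow X$ of condition (1) splits). So the ``$\mathcal{A}$ has an ample sequence'' clause is never applicable to a module category, and the dictionary you hope for between \cite[Definition 3.4]{Mina} and an ample sequence inside $A\mbox{-mod}$ does not exist. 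The paper avoids this entirely by citing \cite[Theorem 4.5]{Mina} for the existence of the extension of $\theta_0$ in Definition~\ref{defn:2} and \cite[2.16.6]{Or} for its uniqueness. Your parenthetical fallback --- the two-category form of Proposition~\ref{prop:Orlov} with a nontrivial equivalence $G$ built from the derived Nakayama functor, so that the ample sequence lives in $G(\mathcal{A})\cap\mathcal{B}$ for a different heart $\mathcal{B}$ --- is the shape of the argument that actually succeeds, but as written it is only a gesture; to make it a proof you would essentially have to reproduce Minamoto's construction, which is precisely the work the citation is doing.
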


\begin{proof}
In view of Lemma \ref{lem:conj}, we may assume that $A$ is triangular or (anti-)Fano. In the first case, the category $A\mbox{-proj}$ is strongly $\mathbf{K}$-standard; see Example \ref{exm:1}. We just apply Theorem \ref{thm:1} below; compare \cite[Theorem 1.1]{Chen}.

The second case follows from \cite[Theorem 4.5]{Mina}, where the uniqueness of the extension of $\theta_0$ in Definition \ref{defn:2} follows from the uniqueness established in \cite[2.16.6]{Or}; compare the last paragraph  in the proof of \cite[Theorem 4.6]{Mina}.
\end{proof}

\section{$\mathbf{K}$-standardness versus $\mathbf{D}$-standardness}

Let $k$ be a commutative ring. For a $k$-linear abelian category $\mathcal{A}$ with enough projectives, we denote by $\mathcal{P}$ the full subcategory formed by projective objects. The main result shows that the $\mathbf{K}$-standardness of $\mathcal{P}$ implies the $\mathbf{D}$-standardness of $\mathcal{A}$. This seems to be useful to study Conjecture \ref{conj:main}.

\begin{thm}\label{thm:1}
Let $\mathcal{A}$ be a $k$-linear abelian category with enough projective objects. Denote by $\mathcal{P}$ the full subcategory of projective objects.  Assume that $\mathcal{P}$ is $\mathbf{K}$-standard. Then $\mathcal{A}$ is $\mathbf{D}$-standard. In this case, $\mathcal{A}$ is strongly $\mathbf{D}$-standard if and only if $\mathcal{P}$ is strongly $\mathbf{K}$-standard.
\end{thm}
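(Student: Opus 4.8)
The plan is to transfer a pseudo-identity on $\mathbf{D}^b(\mathcal{A})$ to a pseudo-identity on $\mathbf{K}^b(\mathcal{P})$, exploiting that $\mathbf{K}^b(\mathcal{P})$ can be realized as a full triangulated subcategory of $\mathbf{D}^b(\mathcal{A})$ via projective resolutions. First I would take a $k$-linear triangle autoequivalence $(F,\omega)$ on $\mathbf{D}^b(\mathcal{A})$ with $F(\mathcal{A})\subseteq\mathcal{A}$ and a natural isomorphism $\theta_0\colon F|_\mathcal{A}\rightarrow{\rm Id}_\mathcal{A}$; by Corollary \ref{cor:D-pseudo} and its proof I may assume $(F,\omega)$ is a pseudo-identity and $\theta_0$ is the identity on $\mathcal{A}$. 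The key observation is that since $F(\mathcal{A})\subseteq\mathcal{A}$ and $F$ is an equivalence, $F$ preserves the class of projective objects: indeed $P\in\mathcal{P}$ iff ${\rm Ext}^j_\mathcal{A}(P,-)=0$ for $j>0$, iff ${\rm Hom}_{\mathbf{D}^b(\mathcal{A})}(P,\Sigma^j(-))=0$ on $\mathcal{A}$ for $j>0$, and this condition is preserved by the fully faithful $F$ together with $F(\mathcal{A})\subseteq\mathcal{A}$. Hence $F$ restricts to a triangle autoequivalence of $\mathbf{K}^b(\mathcal{P})$ (the subcategory of objects with bounded projective resolution, equivalently objects isomorphic to bounded complexes of projectives), and this restriction, call it $(F',\omega')$, is a pseudo-identity on $\mathbf{K}^b(\mathcal{P})$ by Corollary \ref{cor:K-pseudo}.

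Next, since $\mathcal{P}$ is $\mathbf{K}$-standard, Lemma \ref{lem:K-stan}(2) provides a natural isomorphism $\eta\colon (F',\omega')\rightarrow{\rm Id}_{\mathbf{K}^b(\mathcal{P})}$ of triangle functors with $\eta|_\mathcal{P}$ the identity. The task is then to extend $\eta$ from $\mathbf{K}^b(\mathcal{P})$ to all of $\mathbf{D}^b(\mathcal{A})$ as a natural transformation of triangle functors restricting to the identity on $\mathcal{A}$. For this I would use that every object $X\in\mathbf{D}^b(\mathcal{A})$ admits a (bounded-above, or bounded since we truncate) projective resolution $p_X\colon P_X\rightarrow X$ with $P_X\in\mathbf{K}^{-,b}(\mathcal{P})$; more carefully, using the standard $t$-structure truncations one writes $X$ as a finite iterated extension of shifted objects of $\mathcal{A}$, each of which has a finite projective resolution, so $X\in\mathbf{K}^b(\mathcal{P})$ already when $\mathcal{P}$ has finite global dimension — but in general I would instead argue via the natural equivalence $\mathbf{K}^{-,b}(\mathcal{P})\simeq\mathbf{D}^b(\mathcal{A})$ and check that $F$ preserves $\mathbf{K}^{-,b}(\mathcal{P})$, transport $\eta$ along this equivalence, and verify that the transported transformation restricts to the identity on $\mathcal{A}$ (using that $\eta|_\mathcal{P}$ is the identity and the compatibility with $\omega$). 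This yields $\theta\colon (F,\omega)\rightarrow{\rm Id}_{\mathbf{D}^b(\mathcal{A})}$ extending $\theta_0$, so $\mathcal{A}$ is $\mathbf{D}$-standard.

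For the ``strongly'' equivalence, I would invoke the commutative square (\ref{equ:center}) of centers: the upper horizontal map ${\rm res}\colon Z_\vartriangle(\mathbf{D}^b(\mathcal{A}))\rightarrow Z_\vartriangle(\mathbf{K}^b(\mathcal{P}))$ is an isomorphism and it identifies the kernels of the two vertical restriction maps. By Lemma \ref{lem:Dcenter}, $\mathcal{A}$ strongly $\mathbf{D}$-standard means $\mathcal{A}$ is $\mathbf{D}$-standard and the left vertical map is an isomorphism; by Lemma \ref{lem:stan-center}, $\mathcal{P}$ strongly $\mathbf{K}$-standard means $\mathcal{P}$ is $\mathbf{K}$-standard and the right vertical map is an isomorphism. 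Since we have just shown $\mathcal{P}$ $\mathbf{K}$-standard $\Rightarrow$ $\mathcal{A}$ $\mathbf{D}$-standard, and the two vertical kernels coincide under (\ref{equ:center}), one map is an isomorphism iff the other is, giving the claimed biconditional. The main obstacle I anticipate is the extension step in the second paragraph: one must make the assignment $X\mapsto\theta_X$ well-defined independently of the choice of projective resolution and simultaneously natural and compatible with triangles — this is where the full force of $\eta$ being a transformation \emph{of triangle functors} (not merely a natural transformation) is needed, together with a careful bookkeeping of the connecting isomorphisms $\omega^n$, paralleling the truncation argument in Proposition \ref{prop:D-pseudo}.
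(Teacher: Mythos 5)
Your setup (restricting a pseudo-identity on $\mathbf{D}^b(\mathcal{A})$ to a pseudo-identity on $\mathbf{K}^b(\mathcal{P})\subseteq\mathbf{D}^b(\mathcal{A})$ and invoking $\mathbf{K}$-standardness to get $\eta$ with $\eta|_{\mathcal{P}}=\mathrm{Id}$) and your treatment of the ``strongly'' biconditional via the commutative square of centers both match the paper and are correct. The gap is exactly where you anticipate it: the extension of $\eta$ from $\mathbf{K}^b(\mathcal{P})$ to $\mathbf{D}^b(\mathcal{A})$. Your proposed route --- transport along the equivalence $\mathbf{K}^{-,b}(\mathcal{P})\simeq\mathbf{D}^b(\mathcal{A})$ --- does not work as stated, because $\eta$ is only defined on \emph{bounded} complexes of projectives, and the $\mathbf{K}$-standardness hypothesis says nothing about $\mathbf{K}^{-,b}(\mathcal{P})$. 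When $\mathcal{A}$ has objects of infinite projective dimension (the only case where anything remains to be proved, since otherwise $\mathbf{K}^b(\mathcal{P})\simeq\mathbf{D}^b(\mathcal{A})$), extending $\eta$ to an unbounded-below complex of projectives would require some homotopy-limit argument over the brutal truncations, and the relevant (co)products are not available in $\mathbf{D}^b(\mathcal{A})$. So ``transporting $\eta$'' begs the question: the object you want to transport does not yet exist on the domain you need.

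The paper's device, which is the missing idea, is to replace an arbitrary $X\in\mathbf{D}^b(\mathcal{A})$ by an isomorphic complex of the form $0\to A\stackrel{\partial}\to P^{1-n}\to\cdots\to P^m\to 0$ with all $P^i$ projective, $A\in\mathcal{A}$ concentrated in the lowest degree, and $\partial$ a monomorphism (a truncated projective resolution read from the right). This exhibits $X$ in an exact triangle $P\to X\to\Sigma^n(A)\stackrel{h}\to\Sigma(P)$ whose outer terms are a bounded complex of projectives and a shifted stalk complex --- precisely the two classes of objects on which $F$ is already controlled, by $\delta_P$ and by $\omega^n_A$ respectively. One checks that the square involving $h$ commutes (reducing, via the projection $P\to\Sigma^{n-1}(P^{1-n})$, to the naturality of $\delta$ and $\omega$ against the differential $\partial$), applies (TR3) to produce $\theta_X$, and then uses the vanishing $\mathrm{Hom}_{\mathbf{D}^b(\mathcal{A})}(\Sigma^n(A),X)=0$ (from the standard $t$-structure, since $X\simeq\tau_{\geq 1-n}X$) to get uniqueness of the filler, independence of the chosen resolution, naturality in $X$, and compatibility with $\Sigma$. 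Without this decomposition, or an equivalent one, the proof does not go through.
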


\begin{proof}
The last statement follows from Lemmas \ref{lem:stan-center}, \ref{lem:Dcenter} and the commutative diagram (\ref{equ:center}),  whose rows are both isomorphisms.

To show that $\mathcal{A}$ is $\mathbf{D}$-standard, we assume that $(F, \omega)$ is a pseudo-identity on $\mathbf{D}^b(\mathcal{A})$. We view $\mathbf{K}^b(\mathcal{P})$ as a triangulated subcategory of $\mathbf{D}^b(\mathcal{A})$. Then $(F, \omega)$ restricts to a pseudo-identity $(F', \omega)$ on $\mathbf{K}^b(\mathcal{P})$, whose connecting isomorphism is inherited from $F$. Since $\mathcal{P}$ is $\mathbf{K}$-standard, there is a natural isomorphism $\delta\colon (F', \omega)\rightarrow {\rm Id}_{\mathbf{K}^b(\mathcal{P})}$,  which satisfies that $\delta_P={\rm Id}_P$ for any object $P\in \mathcal{P}$.

For a bounded complex $P$ of projective objects and $n\geq 0$, we claim that
\begin{align}\label{equ:omega}
\Sigma^{n+1}(\delta_P)\circ \omega^{n+1}_P=\Sigma(\delta_{\Sigma^n(P)})\circ \omega_{\Sigma^n(P)}.
\end{align}
Indeed, since $\delta$ is a morphism of triangle functors, we have  $\delta_{\Sigma(P)}= \Sigma(\delta_P)\circ \omega_P$. Using induction, we have  $\delta_{\Sigma^{n}(P)}=\Sigma^{n}(\delta_P)\circ \omega^{n}_P$. Now the claim follows from the identity $\omega^{n+1}_P= \Sigma(\omega^{n}_P)\circ \omega_{\Sigma^{n}(P)}$.

Take an arbitrary complex $X$ in $\mathbf{D}^b(\mathcal{A})$. We may assume that $X$ is isomorphic to a complex of the form
\begin{align}\label{equ:X}
\cdots \rightarrow 0\rightarrow A \stackrel{\partial}\rightarrow P^{1-n} \rightarrow P^{2-n}\rightarrow \cdots \rightarrow P^{m-1}\rightarrow P^m \rightarrow 0 \rightarrow \cdots
\end{align}
 with each $P^i$ projective, $m, n\geq 0$,  $A\in \mathcal{A}$  and $\partial$ a monomorphism. Therefore, we have an exact triangle
$$P\stackrel{\iota}\longrightarrow X \stackrel{p}\longrightarrow \Sigma^n(A)\stackrel{h}\longrightarrow \Sigma(P),$$
where $\iota$ is given by the inclusion of complexes and $p$ is the projection. The complex $P$ lies in $\mathbf{K}^b(\mathcal{P})$.  The chain map $h$ is given by the map $\partial\colon A \rightarrow P^{1-n}$. More precisely, we have $\Sigma(c)\circ h=\Sigma^n(\partial)$, where $c\colon P\rightarrow \Sigma^{n-1}(P^{1-n})$ denotes the canonical projection.

The following  observation will be used frequently.
\begin{align}\label{equ:useful}
{\rm Hom}_{\mathbf{D}^b(\mathcal{A})}(F\Sigma^n(A), X)={\rm Hom}_{\mathbf{D}^b(\mathcal{A})}(\Sigma^n(A), X)=0
\end{align}
Indeed, by the injectivity of the morphism $\partial$, $X$ is isomorphic to its good truncation $\tau_{\geq 1-n}(X)$. Then we are done by the standard $t$-structure in $\mathbf{D}^b(\mathcal{A})$; see \cite[Example 1.3.2(i)]{BBD}.

We claim that the following diagram commutes.
\[\xymatrix{
F\Sigma^n(A) \ar[d]_{\omega^n_A} \ar[rr]^-{\omega_P\circ F(h)} && \Sigma (FP) \ar[d]^-{\Sigma(\delta_P)}\\
\Sigma^n(A)\ar[rr]^-{h} && \Sigma(P)
}\]
Recall the canonical projection $c\colon P\rightarrow \Sigma^{n-1}(P^{1-n})$. We have
\begin{align*}
\Sigma(c)\circ h  \circ \omega^n_A & = \Sigma^n(\partial)\circ \omega^n_A\\
& = \Sigma^n F(\partial) \circ \omega^n_A\\
& =  \omega^n_{P^{1-n}} \circ F\Sigma^n(\partial)\\
& =\Sigma(\delta_{\Sigma^{n-1}(P^{1-n})} )\circ \omega_{\Sigma^{n-1}(P^{1-n})}\circ F\Sigma^n(\partial),
\end{align*}
where the second equality uses the pseudo-identity $F$, and the last uses (\ref{equ:omega}) applied to $P^{1-n}$.  On the other hand, we have
\begin{align*}
\Sigma(c)\circ \Sigma(\delta_P)\circ \omega_P\circ F(h) &= \Sigma (\delta_{\Sigma^{n-1}(P^{1-n})}) \circ \Sigma F(c)\circ \omega_P\circ F(h)\\
&= \Sigma (\delta_{\Sigma^{n-1}(P^{1-n})})\circ \omega_{\Sigma^{n-1}(P^{1-n})} \circ F\Sigma(c)\circ F(h)\\
&= \Sigma (\delta_{\Sigma^{n-1}(P^{1-n})}) \circ \omega_{\Sigma^{n-1}(P^{1-n})} \circ F\Sigma^n(\partial).
\end{align*}
We conclude that $\Sigma(c)\circ h \circ  \omega^n_A=
\Sigma(c)\circ \Sigma(\delta_P)\circ \omega_P\circ F(h)$. Then the claim follows from the following observation: the following map
$${\rm Hom}(F\Sigma^n(A), \Sigma(c))\colon {\rm Hom}(F\Sigma^n(A), \Sigma(P))\longrightarrow {\rm Hom}(F\Sigma^n(A), \Sigma^n(P^{1-n}))$$
is injective, where Hom means the Hom spaces in $\mathbf{D}^b(\mathcal{A})$. Indeed, the cone of $c$ is $\Sigma (\sigma_{\geq 2-n}P)$. Then the required injectivity follows from
$${\rm Hom}_{\mathbf{D}^b(\mathcal{A})}(F\Sigma^n(A), \Sigma (\sigma_{\geq 2-n}P))=0. $$

Applying the above claim and (TR3), we obtain the following commutative diagram between exact triangles
\[
\xymatrix{
F(P)\ar[r]^-{F(\iota)} \ar[d]^-{\delta_P} & F(X) \ar[r]^-{F(p)} \ar@{.>}[d]^-{\theta_X} & F\Sigma^n(A) \ar[r]^-{\omega_P\circ F(h)} \ar[d]^-{\omega^n_A}& \Sigma(FP) \ar[d]^-{\Sigma(\delta_P)}\\
P\ar[r]^-{\iota} & X \ar[r]^-{p} & \Sigma^n(A) \ar[r]^-{h} & \Sigma(P),
}\]
where the morphism $\theta_X$ is necessarily an isomorphism. By (\ref{equ:useful}) and \cite[Proposition 1.1.9]{BBD}, we infer that such a morphism $\theta_X$ is unique.

We have to show that the isomorphism $\theta_X$ is independent of the choice of the complex (\ref{equ:X}). Assume that $X$ is isomorphic to  another complex
\begin{align}\label{equ:Y}
\cdots \rightarrow 0\rightarrow B \stackrel{\partial'}\rightarrow Q^{1-r} \rightarrow Q^{2-r}\rightarrow \cdots \rightarrow Q^{s-1}\rightarrow Q^s \rightarrow 0 \rightarrow \cdots
\end{align}
with each $Q^j$ projective and $\partial'$ a monomorphism. Then we have the corresponding exact triangle
$$Q\stackrel{\iota'}\longrightarrow X \stackrel{p'}\longrightarrow \Sigma^r(B)\stackrel{h'}\longrightarrow \Sigma(Q)$$
and the commutative diagram,  which defines the isomorphism $\tilde{\theta}_X$.
\[
\xymatrix{
F(Q)\ar[r]^-{F(\iota')} \ar[d]^-{\delta_Q} & F(X) \ar[r]^-{F(p')} \ar@{.>}[d]^-{\tilde{\theta}_X} & F\Sigma^r(B) \ar[r]^-{\omega_Q\circ F(h')} \ar[d]^-{\omega^r_B}& \Sigma(FQ) \ar[d]^-{\Sigma(\delta_Q)}\\
Q\ar[r]^-{\iota'} & X \ar[r]^-{p'} & \Sigma^r(B) \ar[r]^-{h'} & \Sigma(Q)}
\]
We assume without loss of generality that $r\geq n$. By ${\rm Hom}_{\mathbf{D}^b(\mathcal{A})}(P, \Sigma^r (B))=0$ and \cite[Proposition 1.1.9]{BBD}, we have the following commutative diagram.
\[\xymatrix{
P \ar[r]^-{\iota} \ar@{.>}[d]^-{a}& X \ar[r]^-{p} \ar@{=}[d] & \Sigma^n(A) \ar@{.>}[d]^-{b} \ar[r]^-{h} & \Sigma(P) \ar@{.>}[d]^-{\Sigma(a)}\\
Q \ar[r]^-{\iota'} & X \ar[r]^-{p'} & \Sigma^r(B) \ar[r]^-{h'} & \Sigma(Q)
}\]
Then we have
\begin{align*}
\theta_X\circ F(\iota)&=\iota\circ \delta_P\\
                      &=\iota'\circ a\circ \delta_P\\
                      &=\iota'\circ \delta_Q\circ F(a)\\
                      &=\tilde{\theta}_X\circ F(\iota')\circ F(a)\\
                      &=\tilde{\theta}_X\circ F(\iota),
\end{align*}
where the third equality uses the naturality of $\delta$. We infer that $\theta_X-\tilde{\theta}_X$ factors through $F\Sigma^n(A)$.  Using ${\rm Hom}_{\mathbf{D}^b(\mathcal{A})}(F\Sigma^n(A), X)=0$ in (\ref{equ:useful}), we infer that $\theta_X=\tilde{\theta}_X$, as required.

To prove the naturality of $\theta$, we assume that $f\colon X\rightarrow Y$ is a morphism. We may assume that $X$ is isomorphic to the complex (\ref{equ:X}) and that $Y$ is isomorphic to the complex (\ref{equ:Y}). Moreover, we may assume that $r=n$ and that the morphism $f$ is given by a chain map between these complexes.  Consequently, using these assumptions, we have a commutative diagram.
\[\xymatrix{
P \ar[r]^-{\iota} \ar@{.>}[d]^-{e}& X \ar[r]^-{p} \ar[d]^-{f} & \Sigma^n(A) \ar@{.>}[d]^-{d} \ar[r]^-{h} & \Sigma(P) \ar@{.>}[d]^-{\Sigma(e)}\\
Q \ar[r]^-{\iota'} & Y \ar[r]^-{p'} & \Sigma^n(B) \ar[r]^-{h'} & \Sigma(Q)
}\]
Then using the same argument as above, we infer that
$$(f\circ \theta_X)\circ F(\iota)=(\theta_Y\circ F(f))\circ F(\iota).$$
Since $r=n$, we observe that ${\rm Hom}_{\mathbf{D}^b(\mathcal{A})}(F\Sigma^n(A), Y)=0$ as in (\ref{equ:useful}). We deduce  that $f\circ \theta_X=\theta_Y\circ F(f)$.

It remains to show that $\theta\colon (F, \omega)\rightarrow {\rm Id}_{\mathbf{D}^b(\mathcal{A})}$ is a natural transformation between triangle functors, that is, $\theta_{\Sigma(X)}=\Sigma(\theta_X)\circ \omega_X$ for each complex $X$. We observe that the following commutative diagram defines $\theta_{\Sigma(X)}$.
\[
\xymatrix{
F\Sigma(P)\ar[r]^-{F\Sigma(\iota)} \ar[d]^-{\delta_{\Sigma(P)}} & F\Sigma(X) \ar[r]^-{F\Sigma(p)} \ar@{.>}[d]^-{\theta_{\Sigma(X)}} & F\Sigma^{n+1}(A) \ar[rr]^-{-\omega_{\Sigma(P)}\circ F\Sigma(h)} \ar[d]^-{\omega^{n+1}_A}&& \Sigma(F\Sigma P) \ar[d]^-{\Sigma(\delta_{\Sigma(P)})}\\
\Sigma(P)\ar[r]^-{\Sigma(\iota)} & \Sigma(X) \ar[r]^-{\Sigma(p)} & \Sigma^{n+1}(A) \ar[rr]^-{-\Sigma(h)} && \Sigma^2(P)
}\]
Then we have
\begin{align*}
\theta_{\Sigma(X)} \circ F\Sigma(\iota) &= \Sigma(\iota)\circ \delta_{\Sigma(P)}\\
                                        &= \Sigma(\iota) \circ \Sigma(\delta_P)\circ \omega_P\\
                                        &= \Sigma(\iota\circ \delta_P) \circ \omega_P\\
                                        &=\Sigma(\theta_X\circ F(\iota))\circ \omega_P\\
                                        &=\Sigma(\theta_X)\circ \omega_X\circ F\Sigma(\iota),    \end{align*}
where the second equality uses the fact that $\delta$ is a natural transformation between triangle functors. It follows that $\theta_{\Sigma(X)}-\Sigma(\theta_X)\circ \omega_X$ factors through $F\Sigma^{n+1}(A)=\Sigma^{n+1}(A)$. However, by ${\rm Hom}_{\mathbf{D}^b(\mathcal{A})}(\Sigma^{n+1}(A), \Sigma(X))=0$ from (\ref{equ:useful}), we infer that $\theta_{\Sigma(X)}=\Sigma(\theta_X)\circ \omega_X$.  In view of Lemma \ref{lem:D-stan}, we are done with the whole proof.
\end{proof}

The following is a partial converse of Theorem \ref{thm:1}.

\begin{prop}
Let $\mathcal{A}$ be a $k$-linear abelian category with enough projectives. Denote by $\mathcal{P}$ the full subcategory of projective objects. Assume that each object has finite projective dimension. If $\mathcal{A}$ is $\mathbf{D}$-standard, then $\mathcal{P}$ is $\mathbf{K}$-standard.
\end{prop}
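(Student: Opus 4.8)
The plan is to reduce the statement, via the canonical equivalence $\mathbf{K}^b(\mathcal{P})\simeq \mathbf{D}^b(\mathcal{A})$, to the assumed $\mathbf{D}$-standardness of $\mathcal{A}$.

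First I would record that, since $\mathcal{A}$ has enough projectives and every object has finite projective dimension, the canonical triangle functor $q\colon \mathbf{K}^b(\mathcal{P})\rightarrow \mathbf{D}^b(\mathcal{A})$ is an equivalence: it is fully faithful because the objects of $\mathcal{P}$ are projective, and dense because every bounded complex over $\mathcal{A}$ admits a bounded projective resolution (its finitely many entries share a common bound on their projective dimensions). I fix a $k$-linear quasi-inverse $q^{-1}$. By Lemma \ref{lem:K-stan} it is enough to show that an arbitrary $k$-linear pseudo-identity $(F,\omega)$ on $\mathbf{K}^b(\mathcal{P})$ is isomorphic, as a triangle functor, to ${\rm Id}_{\mathbf{K}^b(\mathcal{P})}$. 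So I would transport it: set $\widehat{F}=qFq^{-1}$, a $k$-linear triangle autoequivalence of $\mathbf{D}^b(\mathcal{A})$. Since $F$ fixes every object of $\mathbf{K}^b(\mathcal{P})$ and $F|_\mathcal{P}={\rm Id}_\mathcal{P}$, one has $\widehat{F}(X)\cong X$ for all $X\in\mathbf{D}^b(\mathcal{A})$ and a natural isomorphism $\widehat{F}|_\mathcal{P}\xrightarrow{\sim}{\rm Id}_\mathcal{P}$; using this together with Lemma \ref{lem:iso-tri}, I would adjust $\widehat{F}$ to an isomorphic triangle functor, still written $\widehat{F}$, with $\widehat{F}(A)=A$ for every $A\in\mathcal{A}$ (hence $\widehat{F}(\mathcal{A})\subseteq\mathcal{A}$) and $\widehat{F}|_\mathcal{P}={\rm Id}_\mathcal{P}$ on the nose. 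By Lemma \ref{lem:D-res}(2), $\Phi:=\widehat{F}|_\mathcal{A}\colon\mathcal{A}\rightarrow\mathcal{A}$ is then an autoequivalence, so exact, with $\Phi|_\mathcal{P}={\rm Id}_\mathcal{P}$.

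The main step, which I expect to be the crux, is the claim: an exact autoequivalence $\Phi$ of $\mathcal{A}$ with $\Phi|_\mathcal{P}={\rm Id}_\mathcal{P}$ is isomorphic to ${\rm Id}_\mathcal{A}$. To prove it, choose for each $A\in\mathcal{A}$ an exact sequence $P_1^A\xrightarrow{d_A}P_0^A\xrightarrow{p_A}A\rightarrow 0$ with $P_0^A,P_1^A\in\mathcal{P}$ (taking the trivial presentation when $A\in\mathcal{P}$). Since $\Phi$ is right exact and $\Phi(d_A)=d_A$, the morphism $\Phi(p_A)\colon P_0^A\rightarrow \Phi(A)$ is again a cokernel of $d_A$, so there is a unique isomorphism $\theta_A\colon \Phi(A)\rightarrow A$ with $\theta_A\circ\Phi(p_A)=p_A$. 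Given $g\colon A\rightarrow B$, I would lift it to $g_0\colon P_0^A\rightarrow P_0^B$ with $p_B\circ g_0=g\circ p_A$; then $\Phi(g_0)=g_0$ as $g_0$ is a morphism in $\mathcal{P}$, and a short computation using $\theta_A\circ\Phi(p_A)=p_A$, $\theta_B\circ\Phi(p_B)=p_B$ and the right-cancellability of the epimorphism $\Phi(p_A)$ yields $\theta_B\circ\Phi(g)=g\circ\theta_A$. Hence $\theta$ is a natural isomorphism $\Phi\xrightarrow{\sim}{\rm Id}_\mathcal{A}$.

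Granting the claim, $\widehat{F}|_\mathcal{A}\cong{\rm Id}_\mathcal{A}$, so by Corollary \ref{cor:D-pseudo} the triangle autoequivalence $\widehat{F}$ is isomorphic to a pseudo-identity on $\mathbf{D}^b(\mathcal{A})$; since $\mathcal{A}$ is $\mathbf{D}$-standard, Lemma \ref{lem:D-stan} gives $\widehat{F}\cong{\rm Id}_{\mathbf{D}^b(\mathcal{A})}$, and conjugating back by $q$ gives $F\cong{\rm Id}_{\mathbf{K}^b(\mathcal{P})}$. Lemma \ref{lem:K-stan} then shows $\mathcal{P}$ is $\mathbf{K}$-standard. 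The finite-projective-dimension hypothesis is used only in the first step, to make $q$ an equivalence; the delicate points are the adjustment of $\widehat{F}$ (making it fix $\mathcal{P}$ strictly while keeping $\widehat{F}(\mathcal{A})\subseteq\mathcal{A}$) and the naturality verification in the displayed claim, which is the genuinely new ingredient.
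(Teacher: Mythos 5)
Your proof is correct and follows essentially the same route as the paper: transport the pseudo-identity along the equivalence $\mathbf{K}^b(\mathcal{P})\simeq\mathbf{D}^b(\mathcal{A})$, show its restriction to $\mathcal{A}$ is isomorphic to the identity, invoke Corollary \ref{cor:D-pseudo} and the $\mathbf{D}$-standardness, and transport back. Your displayed claim (an exact autoequivalence fixing $\mathcal{P}$ is isomorphic to ${\rm Id}_\mathcal{A}$, via projective presentations) is precisely the ``standard argument'' the paper invokes without proof, and your verification of it is correct.
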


\begin{proof}
By the assumption, the obvious inclusion functor $\mathbf{K}^b(\mathcal{P})\rightarrow \mathbf{D}^b(\mathcal{A})$ is an equivalence. Let $F$ be a pseudo-identity on $\mathbf{K}^b(\mathcal{P})$. Then $F$ induces a triangle autoequivalence $F'$ on $\mathbf{D}^b(\mathcal{A})$ satisfying that $F'(X)\simeq X$ and $F'|_\mathcal{P}$ is isomorphic to the identity functor. It follows with a standard argument that $F'|_\mathcal{A}$ is also isomorphic to the identity functor. Consequently, by Corollary \ref{cor:D-pseudo} $F'$ is isomorphic to a pseudo-identity on $\mathbf{D}^b(\mathcal{A})$. By the $\mathbf{D}$-standardness of $\mathcal{A}$, we infer that $F'$ is isomorphic to the identity functor on  $\mathbf{D}^b(\mathcal{A})$. It follows that $F$ is isomorphic to the identity functor ${\rm Id}_{\mathbf{K}^b(\mathcal{P})}$ as a triangle functor. Then we are done by Lemma~\ref{lem:K-stan}.
\end{proof}

\section{Two examples}

In this section, we provide two examples of algebras whose module categories are $\mathbf{D}$-standard. In other words, Conjecture \ref{conj:main} is confirmed for these examples.

Throughout, $k$ will be a field. For a finite dimensional algebra $A$, we denote by $A\mbox{-proj}$ the category of finitely generated projective $A$-modules.

\subsection{The dual numbers} Let $A=k[\varepsilon]$ be the algebra of \emph{dual numbers}, that is, $A=k1_A\oplus k\varepsilon$ with $\varepsilon^2=0$.

\begin{thm}\label{thm:exam1}
Let $A=k[\varepsilon]$ be the algebra of dual numbers. Then the category $A\mbox{-{\rm proj}}$ is $\mathbf{K}$-standard, but not strongly $\mathbf{K}$-standard. Consequently, the module category $A\mbox{-{\rm mod}}$ is $\mathbf{D}$-standard, but not strongly $\mathbf{D}$-standard.
\end{thm}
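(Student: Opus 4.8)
The plan is to study the homotopy category $\mathbf{K}^b(A\mbox{-proj})$ very explicitly. Since $A=k[\varepsilon]$ is local and self-injective, $A\mbox{-proj}$ has a single indecomposable $P=A$ with $\mathrm{End}_A(P)\cong A=k1\oplus k\varepsilon$; note that $\varepsilon$ is \emph{central} in this endomorphism ring, which is exactly the situation of the second case in Proposition \ref{prop:almvan}. Moreover, $A$ being self-injective means $\mathbf{K}^b(A\mbox{-proj})$ is genuinely $2$-periodic on many complexes and its indecomposables can be listed: up to shift they are the stalk $P$ together with the ``string'' complexes $P_{[a,b]}\colon 0\to P\xrightarrow{\varepsilon}P\xrightarrow{\varepsilon}\cdots\xrightarrow{\varepsilon}P\to 0$. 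The key structural input is that every almost vanishing morphism in $\mathbf{K}^b(A\mbox{-proj})$, when its source lies in $A\mbox{-proj}$ itself (degree-zero stalk $P$), is a scalar multiple of $\varepsilon\colon P\to P$, and $\varepsilon$ is central in $\mathrm{End}(P)$; so $P\in\Lambda$ in the notation preceding Lemma \ref{lem:Ric}, and this gives a nontrivial element of $Z_\vartriangle(\mathbf{K}^b(A\mbox{-proj}))$.

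First I would prove the \textbf{non-strongness}: the homomorphism $\mathrm{res}\colon Z_\vartriangle(\mathbf{K}^b(A\mbox{-proj}))\to Z(A\mbox{-proj})$ of (\ref{equ:CK}) is \emph{not} injective. By Lemma \ref{lem:Ric} applied with $\lambda_P=1$ and $\lambda_X=0$ for all other indecomposables, there is a natural isomorphism $\eta\colon \mathrm{Id}\to\mathrm{Id}$ of $\mathbf{K}^b(A\mbox{-proj})$ with $\eta_P=\mathrm{Id}_P+\Delta_P$ on the stalk $P$; the point to check is that $\eta$ is a natural transformation of \emph{triangle} functors, i.e.\ $\eta\Sigma=\Sigma\eta$. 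This holds because $\Sigma^n(P)$ is again the indecomposable $P$ placed in degree $-n$, with the same central almost-vanishing endomorphism $\varepsilon$, so $\eta_{\Sigma^n(P)}=\mathrm{Id}+\varepsilon$ as well, which is visibly compatible with $\Sigma$. Then $\eta-\mathrm{Id}$ is a nonzero element of the kernel $\mathcal N$ of (\ref{equ:CK}) (it is nonzero since $\Delta_P\neq 0$), so by Lemma \ref{lem:stan-center} the category $A\mbox{-proj}$ is not strongly $\mathbf{K}$-standard. By Lemma \ref{lem:Dcenter} and the commutative diagram (\ref{equ:center}), whose rows are isomorphisms, the same element produces a nonzero kernel element for $\mathrm{res}\colon Z_\vartriangle(\mathbf{D}^b(A\mbox{-mod}))\to Z(A\mbox{-mod})$, so $A\mbox{-mod}$ is not strongly $\mathbf{D}$-standard.

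Next I would prove the \textbf{$\mathbf{K}$-standardness} of $A\mbox{-proj}$, which is the harder half. By Lemma \ref{lem:K-stan} it suffices to show every $k$-linear pseudo-identity $(F,\omega)$ on $\mathbf{K}^b(A\mbox{-proj})$ is isomorphic to $\mathrm{Id}$ as a triangle functor. Since $F$ fixes the stalk $P$ and acts as the identity on $\mathrm{End}(P)=k1\oplus k\varepsilon$ (a pseudo-identity is the identity on each $\Sigma^n(A\mbox{-proj})$, and here $A\mbox{-proj}$ has a single generator), $F$ is already the identity on the generating subcategory $A\mbox{-proj}$. What remains is the connecting isomorphism $\omega$ and the action of $F$ on the morphisms $P_{[a,b]}\to P_{[c,d]}$ between longer complexes; one shows $F$ is determined on these, using that $A\mbox{-proj}$ generates and that $\mathrm{Hom}$ between shifts of $P$ vanishes off degree $0$ (Lemma \ref{lem:K-res}). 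The crucial computation, where I expect the main obstacle, is to pin down $\omega_P\colon FP\to\Sigma F\Sigma^{-1}P$, equivalently to show $\omega^1_P=\mathrm{Id}$ after adjusting: here one runs the argument of Proposition \ref{prop:almvan} on the exact triangle $P\xrightarrow{\varepsilon}P\to P_{[0,1]}\to \Sigma P$ (or its shifts), using the centrality of $\varepsilon$ in $\mathrm{End}(P)$ to conclude the relevant scalar is $1$; this controls $\omega$ on all of $\mathbf{K}^b(A\mbox{-proj})$ by the generation and the inductive construction of $\omega^n$. Once $F|_{A\mbox{-proj}}=\mathrm{Id}$ and $\omega=\mathrm{Id}_\Sigma$ are established, an application of Lemma \ref{lem:iso-gen} (with a spanning set of morphisms consisting of $\mathrm{Id}_P$, $\varepsilon$, and the structure maps of the $P_{[a,b]}$'s) produces the natural isomorphism $F\to\mathrm{Id}$ restricting to the identity on $A\mbox{-proj}$, giving $\mathbf{K}$-standardness. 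Finally, Theorem \ref{thm:1} upgrades this to the $\mathbf{D}$-standardness of $A\mbox{-mod}$, completing the proof; the ``not strongly'' part was already handled above.
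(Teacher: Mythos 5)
Your overall strategy (explicit description of $\mathbf{K}^b(A\mbox{-proj})$, normalization of a pseudo-identity, detection of non-strongness through the triangle center, and Theorem \ref{thm:1} for the passage to $A\mbox{-mod}$) is the same as the paper's, but both halves have genuine gaps as written.

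\textbf{Non-strongness.} The element you build from Lemma \ref{lem:Ric} with $\lambda_P=1$ and $\lambda_X=0$ for all other indecomposables does not do the job, for two reasons. First, for $n\neq 0$ the shift $\Sigma^n(P)$ is a \emph{different} indecomposable from $P$, so your choice assigns it the scalar $0$; you then assert $\eta_{\Sigma^n(P)}=\mathrm{Id}+\varepsilon$, which contradicts your own choice, and with the choice as stated $\eta\Sigma\neq\Sigma\eta$, so $\eta\notin Z_\vartriangle(\mathbf{K}^b(A\mbox{-proj}))$. Second, and more seriously: even after repairing this by putting $\lambda_{\Sigma^n(P)}=1$ for all $n$, the restriction of $\eta-\mathrm{Id}$ to $A\mbox{-proj}$ is $\Delta_P=\varepsilon\cdot(-)\neq 0$, so $\eta-\mathrm{Id}$ is \emph{not} in the kernel of the map {\rm res} in (\ref{equ:CK}); it restricts to the nonzero central element $\varepsilon$. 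To exhibit a nonzero kernel element you must support the scalars away from the stalk complexes, e.g.\ take $\lambda_X=1$ for every shift of the two-term complex $P\xrightarrow{\varepsilon}P$ and $\lambda_X=0$ on all other indecomposables (the shift-invariance of the choice is what makes the result lie in the triangle center). The paper instead simply quotes the computation of $Z_\vartriangle(\mathbf{K}^b(A\mbox{-proj}))$ from \cite{KY}.

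\textbf{$\mathbf{K}$-standardness.} The plan ``show $F$ fixes all structure maps and $\omega=\mathrm{Id}_\Sigma$, then apply Lemma \ref{lem:iso-gen}'' cannot be carried out as stated. A pseudo-identity is only required to be the identity on the stalk subcategories $\Sigma^n(\mathcal{A})$; on the one-dimensional Hom spaces between distinct indecomposables it acts by arbitrary nonzero scalars, and applying Proposition \ref{prop:almvan} to $P\xrightarrow{\varepsilon}P\to P_{[0,1]}\to\Sigma P$ only yields a \emph{relation} among these scalars and the scalar part of $\omega$ (all three non-identity maps of the image triangle are rescaled), not that each scalar equals $1$. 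Two steps are therefore missing. (a) One must first conjugate $F$ by adjusting isomorphisms $\delta_X$ (suitable scalars on each indecomposable) so that the new pseudo-identity fixes the inclusions and projections literally; the existence of such scalars is exactly the cocycle identity (\ref{equ:lambda}). (b) Even after (a), the resulting triangle functor is $(\mathrm{Id},\omega'')$ with $\omega''_{X_{n,m}}=\Sigma(1+b_{n,m}\Delta_{n,m})$, where the nilpotent coefficients $b_{n,m}$ need \emph{not} vanish; one has to solve $b_{n,m}=f_{n-1,m-1}-f_{n,m}$ and twist by the natural isomorphism of Lemma \ref{lem:Ric} to reach $(\mathrm{Id},\mathrm{Id}_\Sigma)$. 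Step (b) is precisely where the failure of strong $\mathbf{K}$-standardness lives, so the assertion that ``$\omega=\mathrm{Id}_\Sigma$ is established'' skips the essential difficulty of this example rather than resolving it.
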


The structure of $\mathbf{K}^b(A\mbox{-proj})$ is well known; see \cite{Ku, KY}. For any $n\leq m$, we denote by $X_{n, m}$ the following complex
\begin{align*}
\cdots \rightarrow 0 \rightarrow A \rightarrow A \rightarrow \cdots \rightarrow A\rightarrow A \rightarrow 0\rightarrow \cdots
\end{align*}
where the nonzero components start at degree $n$ and end at degree $m$. The unnamed arrow $A\rightarrow A$ is the morphism induced by the multiplication of $\varepsilon$. In particular, $X_{n, n}=\Sigma^{-n}(A)$ is the stalk complex concentrated at degree $n$.

For $n\leq m$, we denote by $i_{n, m}\colon X_{n, m}\rightarrow X_{n-1, m}$ the inclusion map,  and by $\pi_{n, m}\colon X_{n, m}\rightarrow X_{n, m-1}$ the canonical projection if further $n<m$. For $n\leq m\leq l$, we denote by $c_{n, m, l}\colon X_{n, m}\rightarrow X_{m, l}$ the following chain map
\[\xymatrix@C=14pt{
0\ar[r] & A \ar[r] & \cdots \ar[r] & A\ar[r] & A \ar[d]\ar[r] & 0   \\
 & & &   0 \ar[r]  &  A \ar[r] & \cdots \ar[r] & A\ar[r] & A \ar[r] & 0.
}\]
Here, as above, the unnamed arrows $A\rightarrow A$ denote the morphism given by the multiplication of $\varepsilon$. We observe the following exact triangle
\begin{align}\label{equ:tri1}
X_{m, m}\xrightarrow{i_{n+1, m}\circ \cdots \circ i_{m-1, m}\circ i_{m, m}} X_{n,m}\xrightarrow{\pi_{n, m}}  X_{n, m-1}\xrightarrow{c_{n, m-1, m-1}}  X_{m-1, m-1}.
\end{align}

We denote by $\Delta_{n, m}$ the following composition
$$X_{n, m}\xrightarrow{c_{n, m, m}}  X_{m, m}\stackrel{i_{m, m}}\longrightarrow X_{m-1, m}\longrightarrow \cdots \longrightarrow X_{n+1, m}\xrightarrow{i_{n+1, m}}  X_{n, m}.$$
We set $\Delta_{n,n}=c_{n, n, n}$.

The following results are well known; compare \cite[Lemma 5.1]{KY}.

\begin{lem}\label{lem:fact1}
The following facts hold.
\begin{enumerate}
\item $\{X_{n, m}\; |\; n\leq m\}$ is a complete set of representatives of the isomorphism classes of indecomposable objects in $\mathbf{K}^b(A\mbox{-{\rm proj}})$.
\item For $n\leq m$ and $1\leq r$,  we have ${\rm Hom}(X_{n, m}, X_{n-r, m})=k(i_{n-r+1, m}\circ \cdots \circ i_{n-1, m}\circ i_{n, m})$.
    \item For $n<m$ and $1\leq r\leq m-n$, we have  ${\rm Hom}(X_{n, m}, X_{n, m-r})=k(\pi_{n, m-r+1}\circ \cdots \circ \pi_{n, m-1}\circ  \pi_{n, m})$.
     \item For $n\leq m\leq l$, we have  ${\rm Hom}(X_{n, m}, X_{m, l})=kc_{n, m, l}$ if $n<m$ or $m<l$.
    \item For $n\leq m$, we have ${\rm Hom}(X_{n, m}, X_{n, m})=k{\rm Id}_{X_{n, m}}\oplus k\Delta_{n, m}$, where the endomorphism $\Delta_{n, m}$ is almost vanishing.
    \item The morphisms $\{i_{n, m}, \pi_{n, m}, c_{n,m, l}\; |\; n\leq m\leq l\}$ span the $k$-linear category $\mathbf{K}^b(A\mbox{-{\rm proj}})$. \hfill $\square$
\end{enumerate}
\end{lem}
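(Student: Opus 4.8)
The plan is to reduce every one of the six statements to bare‑hands computations with \emph{minimal} complexes of finitely generated free $A$‑modules, using that $A=k[\varepsilon]$ is local with radical $k\varepsilon$ and that $\mathrm{End}_A(A)=A$.

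For (1), first I would invoke the standard fact that over a semiperfect ring every bounded complex of finitely generated projectives is isomorphic, in the homotopy category, to a \emph{minimal} one, i.e.\ to a complex $P$ with $d_P(P)\subseteq \varepsilon P$, and that a homotopy equivalence between minimal complexes is already an isomorphism of complexes; both are proved by Gaussian elimination on the matrices of the differentials. Writing the differentials of such a $P$ as $d_P^n=\varepsilon N^n$ with $N^n$ a matrix over $k$, the relation $d_P^{n+1}d_P^n=0$ holds automatically because $\varepsilon^2=0$. Hence minimal bounded complexes of free $A$‑modules, up to isomorphism of complexes, correspond bijectively to finite‑dimensional representations of the quiver with vertex set $\mathbb{Z}$ and exactly one arrow $n\to n+1$ for each $n$ (with no relations). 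That category of representations is Krull--Schmidt with the interval representations as its indecomposable objects, and translating back shows that the indecomposable objects of $\mathbf{K}^b(A\mbox{-proj})$ are precisely the $X_{n,m}$ with $n\le m$. This gives (1).

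For (2)--(5) I would compute the morphism spaces directly between minimal complexes. A degreewise morphism $X_{n,m}\to X_{p,q}$ is multiplication by an element of $A=\mathrm{End}_A(A)$ in each degree of the overlap $[n,m]\cap[p,q]$; reducing the chain‑map relations modulo $\varepsilon$ shows that these elements must have a common image $\lambda\in k$ along the interior of the overlap, and that $\lambda=0$ is forced whenever an endpoint of one interval is an interior degree of the other in the ``wrong'' orientation, while a null‑homotopy only alters the $\varepsilon$‑components and only on the shifted overlap. Running this through the short list of mutual positions of the two intervals yields the four cases: if $[p,q]\supseteq[n,m]$ is obtained by extending to the left, the only surviving morphisms are the scalar multiples of the composite of the inclusions $i_{n,m},i_{n-1,m},\dots,i_{n-r+1,m}$, which is (2); extending to the right gives the composites of the projections $\pi_{n,\bullet}$ and hence (3); if the two intervals meet in the single degree $m$ --- the right endpoint of the source and the left endpoint of the target --- with at least one of them non‑degenerate there, the space is one‑dimensional and spanned by $c_{n,m,l}$, which is (4); and when source equals target the chain maps modulo the (one‑codimensional) space of null‑homotopies form the two‑dimensional algebra $k\,\mathrm{Id}_{X_{n,m}}\oplus k\Delta_{n,m}$, which is (5). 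Since $\Delta_{n,m}$ is a nonzero element of $\mathrm{rad}\,\mathrm{End}_{\mathbf{K}^b(A\mbox{-proj})}(X_{n,m})$ and this radical squares to zero, its almost‑vanishing is then checked using (2)--(5): $f\circ\Delta_{n,m}=0$ for every non‑section $f$ out of $X_{n,m}$ and $\Delta_{n,m}\circ g=0$ for every non‑retraction $g$ into $X_{n,m}$ (alternatively one exhibits the almost split triangle ending in $\Delta_{n,m}$, as in \cite{Ku,KY}).

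Finally, (6) is a formal consequence. The objects occurring as domains or codomains of the listed morphisms are exactly the $X_{n,m}$, which by (1) exhaust the indecomposables, so every object of $\mathbf{K}^b(A\mbox{-proj})$ is a finite direct sum of them. It remains to see that every morphism between two $X_{n,m}$'s is a $k$‑linear combination of identities and composites of the $i_{n,m},\pi_{n,m},c_{n,m,l}$. For the relative positions covered by (2)--(5) this is explicit, noting that $\Delta_{n,m}=i_{n+1,m}\circ\cdots\circ i_{m,m}\circ c_{n,m,m}$; for any other relative position one first uses a composite of $\pi$'s and/or $i$'s and/or a single $c$ to move $X_{n,m}$ to an intermediate complex sharing the relevant endpoint with $X_{p,q}$, thereby reducing to the already treated cases and showing along the way that the morphism space is at most one‑dimensional with generator factoring through the basic maps. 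I expect the main obstacle to be purely organizational: the case analysis over the mutual positions of the two intervals in (2)--(5), and making the almost‑vanishing of $\Delta_{n,m}$ rigorous without simply quoting the Auslander--Reiten structure of $\mathbf{K}^b(A\mbox{-proj})$, require care even though each individual verification is elementary.
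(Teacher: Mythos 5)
Your sketch is correct, but note that the paper does not actually prove this lemma: it is stated with a reference ("well known; compare [KY, Lemma 5.1]", and the related computations in [Ku] and, for the second example, [Bon]), so your argument is a self-contained substitute rather than a parallel of an argument in the text. Your route --- passing to minimal complexes over the local ring $k[\varepsilon]$, observing that $\varepsilon^2=0$ makes $d^2=0$ automatic so that minimal complexes are exactly representations of the linear quiver with vertex set $\mathbb{Z}$, and then computing chain maps modulo null-homotopies interval by interval --- is the standard way these facts are established in the cited sources, and all the case conclusions you state agree with (1)--(5); in particular the exceptional case $n=m=l$ in (4) and the two-dimensional endomorphism ring in (5) come out as you describe. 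Two places deserve to be made explicit if you write this up in full. First, in (1) the reduction functor from minimal complexes to quiver representations is full with kernel (on Hom spaces) a square-zero ideal, which is what lets you transport indecomposability and isomorphism classes both ways; your one-line appeal to the correspondence silently uses this. Second, (6) needs slightly more than reducing to (2)--(5): you must check, for every relative position of the two intervals, that the Hom space is spanned by composites of the listed $i$, $\pi$, $c$ (it is at most one-dimensional away from the diagonal, with generator a composite such as an inclusion/projection string or a single $c$ followed by inclusions), and your remark that $\Delta_{n,m}=i_{n+1,m}\circ\cdots\circ i_{m,m}\circ c_{n,m,m}$ is exactly what handles the diagonal. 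For the almost-vanishing of $\Delta_{n,m}$, your fallback via the Auslander--Reiten structure is the cleaner option: $A$ is self-injective with trivial Nakayama functor, so $\mathbf{K}^b(A\mbox{-proj})$ has almost split triangles with $\tau=\Sigma^{-1}$, forcing the one-dimensional radical of ${\rm End}(X_{n,m})$ to be spanned by an almost vanishing endomorphism; verifying the non-section/non-retraction conditions by hand from (2)--(5) also works but requires the full Hom table from (6).
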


\begin{lem}\label{lem:lambda}
Let $(F, \omega)\colon \mathbf{K}^b(A\mbox{-{\rm proj}})\rightarrow \mathbf{K}^b(A\mbox{-{\rm proj}})$ be a pseudo-identity. Assume that $F(i_{n,m})=\lambda_{n, m}i_{n, m}$ and $F(\pi_{n, m})=\mu_{n, m}\pi_{n, m}$ for any $n\leq m$, where $\lambda_{n, m}$ and $\mu_{n, m}$ are nonzero scalars. Then the following statements hold.
\begin{enumerate}
\item For $n\leq m\leq l$, we have
$$F(c_{n,m, l})=(\lambda_{n+1, m} \cdots \lambda_{m-1, m}\lambda_{m, m}  \cdot \mu_{m, m+1} \cdots \mu_{m, l-1} \mu_{m ,l})^{-1} c_{n, m, l}.$$
\item Assume for each $m\in \mathbb{Z}$ that $\omega_{(X_{m, m})}=\Sigma(a_{m}+b_{m}\Delta_{m, m})$ with $a_{m}, b_m\in k$ and $a_{m}\neq 0$. Then for $n<m$ we have
    \begin{align}\label{equ:lambda}
    \lambda_{n+1, m}\cdots \lambda_{m-1, m}\lambda_{m, m}\mu_{n, m}a_m=\lambda_{n+1, m-1}\cdots \lambda_{m-2, m-1}\lambda_{m-1, m-1}.
    \end{align}
\end{enumerate}
\end{lem}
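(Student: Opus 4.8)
The plan is to use the two exact triangles already written down for $\mathbf{K}^b(A\mbox{-proj})$, namely the triangle (\ref{equ:tri1}) defining the connecting maps via $c_{n,m-1,m-1}$, and the description of $\Delta_{n,m}$, and to exploit the fact that $F$ is a triangle functor so that it sends these triangles to triangles compatibly with $\omega$. For part (1), I would build $c_{n,m,l}$ out of the "elementary" morphisms $i$ and $\pi$ together with the connecting maps. Concretely, I would first treat the extreme cases $c_{n,m,m}$ (a composite of $i$'s postcomposed with nothing, plus one connecting map) and $c_{m,m,l}$ (the canonical projection $X_{m,l}\to X_{m,l}$ followed by $\pi$'s, essentially), and observe that each of these appears as the third (connecting) morphism of an exact triangle whose first two morphisms are known composites of $i$'s and $\pi$'s; since $F$ is a triangle functor, applying $F$ to such a triangle, using $F(i_{n,m})=\lambda_{n,m}i_{n,m}$, $F(\pi_{n,m})=\mu_{n,m}\pi_{n,m}$, and the connecting isomorphism $\omega$ evaluated at the relevant stalk complex, Proposition \ref{prop:almvan} (applied to the exact triangle (\ref{equ:tri1}), whose endomorphism ring of the third term is $k\,{\rm Id}\oplus k\Delta$ with $\Delta$ almost vanishing, by Lemma \ref{lem:fact1}(5)) pins down the scalar by which $F$ acts on $c$. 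Iterating/factoring $c_{n,m,l}=\pi_{m,m+1}\circ\cdots\circ c_{n,m,m}\circ(\text{stuff})$ — more precisely writing $c_{n,m,l}$ as a composite involving $c_{n,m,m}$ and the appropriate $\pi_{m,\bullet}$'s and $i_{\bullet,m}$'s — and multiplying the scalars gives the stated product $(\lambda_{n+1,m}\cdots\lambda_{m,m}\cdot\mu_{m,m+1}\cdots\mu_{m,l})^{-1}$. The inverse appears because $c$ sits on the "receiving" side of a naturality/commutativity square coming from the triangle, so the scalar of $F(c)$ is forced to cancel the product of the scalars on the other three edges.

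For part (2), the idea is to apply $F$ to the exact triangle (\ref{equ:tri1}) with its specific maps,
\[
X_{m,m}\xrightarrow{\;i_{n+1,m}\circ\cdots\circ i_{m,m}\;} X_{n,m}\xrightarrow{\;\pi_{n,m}\;} X_{n,m-1}\xrightarrow{\;c_{n,m-1,m-1}\;} X_{m-1,m-1},
\]
and compare the resulting exact triangle with (\ref{equ:tri1}) itself. Applying $F$ replaces the first map by $(\lambda_{n+1,m}\cdots\lambda_{m,m})$ times itself, the second by $\mu_{n,m}$ times itself, and the third by $(\text{scalar from part (1)})$ times itself, all twisted by the connecting isomorphism $\omega$; the hypothesis that $\omega_{X_{m,m}}=\Sigma(a_m+b_m\Delta_{m,m})$ controls the twist on the last term $X_{m-1,m-1}$ (after translation this is a stalk complex, and $\omega^{\text{appropriate}}$ is computed from the $\omega_{X_{j,j}}$'s). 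Then, as in the proof of Proposition \ref{prop:almvan}, I would write down the isomorphism of triangles: the commuting square at the first map forces one relation among $\lambda_{n+1,m}\cdots\lambda_{m,m}$, the square at the $\pi_{n,m}$ spot forces another relation bringing in $\mu_{n,m}$, and the square at the connecting map $c_{n,m-1,m-1}$ — using part (1) to know $F(c_{n,m-1,m-1})=(\lambda_{n+1,m-1}\cdots\lambda_{m-1,m-1})^{-1}c_{n,m-1,m-1}$ and using $\omega_{X_{m,m}}=\Sigma(a_m+b_m\Delta_{m,m})$ — forces the displayed identity (\ref{equ:lambda}). The $\Delta_{m,m}$ contributions drop out because $\Delta$ is almost vanishing, so precomposing with the non-section $\pi_{n,m}$ (or postcomposing with the non-retraction $i$) kills it, exactly as in Proposition \ref{prop:almvan}.

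The main obstacle I anticipate is bookkeeping: keeping straight which stalk complex each $\omega^n$ is evaluated at, the signs coming from the translation functor's $d_{\Sigma X}=-d_X$ (which is why $c_{n,m-1,m-1}$ rather than its negative appears, or vice versa), and the direction of the inequalities $n\le m\le l$ in the factorization of $c_{n,m,l}$. There is a genuine mathematical point rather than pure bookkeeping in using Proposition \ref{prop:almvan} correctly: one must check that the triangle (\ref{equ:tri1}) meets its hypotheses ($g\neq 0$, $h\neq 0$, and ${\rm End}(X_{n,m-1})=k\,{\rm Id}\oplus k\Delta$ with $\Delta$ almost vanishing), which is immediate from Lemma \ref{lem:fact1}, but one should also verify that the relevant maps are non-sections / non-retractions so the $\Delta$-terms really vanish. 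Once that is set up, parts (1) and (2) are an induction on the length $l-m$ (resp.\ $m-n$) with the base case handled directly by one application of Proposition \ref{prop:almvan}.
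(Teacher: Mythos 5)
Your treatment of part (2) is essentially the paper's argument: apply $(F,\omega)$ to the triangle (\ref{equ:tri1}), use part (1) to evaluate $F(c_{n,m-1,m-1})$, observe that the $\Delta_{m,m}$-contribution of $\omega_{(X_{m,m})}$ dies against the almost vanishing property, and invoke Proposition \ref{prop:almvan} after normalizing the scalars on the first two arrows. That part is fine, \emph{granted} part (1).

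The gap is in your proposed proof of part (1). You want to pin down the scalar of $F(c_{n,m,l})$ by exhibiting $c$ (or the extreme cases $c_{n,m,m}$, $c_{m,m,l}$) as the connecting morphism of an exact triangle and applying Proposition \ref{prop:almvan}. But a triangle functor sends the connecting morphism $h$ to $\omega_{X}\circ F(h)$, never to $F(h)$ alone, so this method can only determine the product of the scalar of $F(c)$ with the scalar $a$ appearing in $\omega$ at the relevant stalk complex. In part (1) nothing is assumed about $\omega$, and the claimed formula is $\omega$-free; for the dual numbers the scalars $a_m$ are genuinely free parameters (this is exactly why $A\mbox{-proj}$ fails to be strongly $\mathbf{K}$-standard), so they cannot silently be $1$. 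What your triangle computation actually produces is the single relation of part (2), i.e.\ (\ref{equ:lambda}), which entangles $F(c)$ with $a_m$; used as a definition of $F(c)$ it makes part (2) circular. The way out — and the paper's actual argument — is purely compositional and avoids the triangulated structure entirely: one has the identity
\[
\Sigma^{-m}(\phi)=(\pi_{m, m+1}\circ \cdots \circ \pi_{m, l})\circ c_{n, m, l}\circ (i_{n+1, m}\circ \cdots \circ i_{m, m}),
\]
where $\phi\colon A\to A$ is multiplication by $\varepsilon$. Since a pseudo-identity restricts to the identity functor on each $\Sigma^{-m}(\mathcal{A})$, $F$ fixes $\Sigma^{-m}(\phi)$; since ${\rm Hom}(X_{n,m},X_{m,l})$ is one-dimensional (Lemma \ref{lem:fact1}(4)), $F(c_{n,m,l})$ is a scalar multiple of $c_{n,m,l}$, and applying $F$ to the displayed identity forces that scalar to be the stated inverse product. (Also note that your suggested factorization of $c_{n,m,l}$ \emph{through} $c_{n,m,m}$ cannot work literally: $c_{m,m,l}\circ c_{n,m,m}=0$ because $\varepsilon^2=0$; the usable relations go the other way, e.g.\ $\pi_{m,l}\circ c_{n,m,l}=c_{n,m,l-1}$ and $c_{n,m,l}\circ i_{n+1,m}=c_{n+1,m,l}$, with base case $c_{m,m,m}=\Sigma^{-m}(\phi)$ fixed by $F$.)
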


For (1), we observe that if $n=m$, the coefficients $\lambda$'s do not appear; if $m=l$, $\mu$'s do not appear.

\begin{proof}
(1) We denote by $\phi\colon A\rightarrow A$ the morphism induced by the multiplication of $\varepsilon$. Then we have that  $F\Sigma^m(\phi)=\Sigma^m(\phi)$ for $m\in \mathbb{Z}$. In view of Lemma \ref{lem:fact1}(4), we have that $F(c_{n, m, l})$ equals $c_{n, m, l}$ up to a nonzero scalar. We observe that
$$\Sigma^{-m}(\phi)=(\pi_{m, m+1}\circ \cdots \circ \pi_{m, l-1}\circ \pi_{m, l})\circ c_{n, m, l}\circ  (i_{n+1, m}\circ \cdots \circ i_{m-1, m}\circ i_{m, m}).$$
Applying $F$ to both sides and using the claim above, we are done.

(2) We apply the triangle functor $(F, \omega)$ to the exact triangle (\ref{equ:tri1}). Then the three morphisms in the triangle change up to  nonzero scalars. Here, we observe that
$$\omega_{(X_{m, m})}\circ F(c_{n, m-1, m-1})=a_{m} (\lambda_{n+1, m-1}\cdots \lambda_{m-2, m-1}\lambda_{m-1, m-1})^{-1}c_{n, m-1, m-1}.$$
The resulted triangle is still exact. Then we are done by Proposition \ref{prop:almvan}.
\end{proof}

\vskip 5pt

\noindent \emph{Proof of Theorem \ref{thm:exam1}.} In the proof, we put $\mathcal{A}=A\mbox{-{\rm proj}}$  and $\mathcal{T}=\mathbf{K}^b(A\mbox{-{\rm proj}})$.

 Let $(F, \omega)$ be a pseudo-identity on $\mathcal{T}$. As above, we assume that $F(i_{n,m})=\lambda_{n, m}i_{n, m}$ and $F(\pi_{n, m})=\mu_{n, m}\pi_{n, m}$ for any $n\leq m$. Assume for each $m\in \mathbb{Z}$ that $\omega_{(X_{m, m})}=\Sigma(a_{m}+b_{m}\Delta_{m, m})$ with $a_{m}, b_m\in k$ and $a_{m}\neq 0$. We take nonzero scalars $c_m$ such that $c_0=1$ and $a_m=c_{m-1}(c_m)^{-1}$. For each complex $X$, we choose an isomorphism $$\delta_X\colon F(X)=X\longrightarrow X=F'(X)$$
  such that $\delta_{\Sigma^m(X)}=c_{-m}{\rm Id}_{\Sigma^m(X)}$ for any $X\in \mathcal{A}$ and $m\in \mathbb{Z}$,  and that
 $$\delta_{(X_{n, m})}=c_m (\lambda_{n+1, m}\cdots \lambda_{m-1, m}\lambda_{m, m})^{-1} {\rm Id}_{X_{n, m}}$$
for any $n< m$. By assumption, we observe that $\delta_{(X_{m, m})}=c_m {\rm Id}_{X_{m, m}}$.

Using these $\delta_X$'s as the adjusting isomorphisms, we obtain a new triangle functor $(F', \omega')$ such that $\delta\colon (F, \omega)\rightarrow (F', \omega')$ is an isomorphism. We observe that $F'$ is also a pseudo-identity. We claim that $F'(i_{n, m})=i_{n, m}$ and $F'(\pi_{n, m})=\pi_{n, m}$. Indeed, the claim is equivalent to the following identities:
$$i_{n, m}\circ \delta_{(X_{n, m})}=\delta_{(X_{n-1, m})} \circ F(i_{n, m}), \quad \pi_{n, m}\circ \delta_{(X_{n, m})}=\delta_{(X_{n, m-1})}\circ F(\pi_{n, m}). $$
The left identity follows from the definition of these isomorphisms $\delta_X$'s, and the right one follows from (\ref{equ:lambda}) and the fact that $a_m=c_{m-1}(c_m)^{-1}$.

Applying Lemma \ref{lem:lambda}(1) to $F'$, we infer that $F'(c_{n, m, l})=c_{n, m, l}$. Since these morphisms span $\mathcal{T}$, by Lemma \ref{lem:iso-gen} there is a unique natural isomorphism $\delta'\colon F'\rightarrow {\rm Id}_\mathcal{T}$ such that $\delta'_{(X_{n, m})}={\rm Id}_{X_{n, m}}$. Consequently, there is a natural isomorphism $\omega''\colon \Sigma \rightarrow \Sigma$ such that $({\rm Id}_\mathcal{T}, \omega'')$ is a triangle functor with $\delta'$ an isomorphism between triangle functors; see Lemma \ref{lem:iso-tri}.

We assume that $\omega''_{(X_{n, m})}=\Sigma(a_{n, m}+b_{n, m}\Delta_{n, m})$ for $a_{n, m}\neq 0$. We rotate the triangle (\ref{equ:tri1}) to get the following exact triangle
\begin{align}\label{equ:tri2}
 X_{n,m}\xrightarrow{\pi_{n, m}}  X_{n, m-1}\xrightarrow{c_{n, m-1, m-1}}  X_{m-1, m-1} \stackrel{t}\longrightarrow \Sigma(X_{n, m}),
\end{align}
where $t=-\Sigma(i_{n+1, m}\circ \cdots \circ i_{m-1, m}\circ i_{m, m})$. Applying the triangle functor $({\rm Id}_\mathcal{T}, \omega'')$ to this triangle, we have the following exact triangle
\begin{align*}
 X_{n,m}\xrightarrow{\pi_{n, m}}  X_{n, m-1}\xrightarrow{c_{n, m-1, m-1}}  X_{m-1, m-1} \xrightarrow{a_{n, m}t} \Sigma(X_{n, m}),
\end{align*}
where we use $\Sigma(\Delta_{n, m})\circ t=0$. We apply Proposition \ref{prop:almvan} to (\ref{equ:tri2}) to infer that $a_{n, m}=1$ for any $n\leq m$.

We define scalars $f_{n, m}$ for $n\leq m$ such that $f_{n, 0}=0$ and $b_{n, m}=f_{n-1, m-1}-f_{n, m}$. By Lemma \ref{lem:Ric} there is a natural isomorphism $\gamma\colon{\rm Id}_\mathcal{T}\rightarrow {\rm Id}_\mathcal{T}$ such that $\gamma_{(X_{n, m})}=\Sigma(1+f_{n, m}\Delta_{n, m})$.

We claim that $\gamma\colon ({\rm Id}_\mathcal{T}, \omega'')\rightarrow ({\rm Id}_\mathcal{T}, {\rm Id}_\Sigma)$ is an isomorphism of triangle functors. It suffices to prove that the right square in the following diagram commutes; compare Lemma \ref{lem:twofun}.
\[\xymatrix{
X_{n-1, m-1}\ar[d]^-{\gamma_{(X_{n-1, m-1})}} \ar[r]^-\phi & \Sigma(X_{n, m})\ar[d]^-{\gamma_{\Sigma(X_{n,m})}} \ar[rr]^-{\Sigma(1+b_{n, m}\Delta_{n, m})} && \Sigma(X_{n, m}) \ar[d]^-{\Sigma (\gamma_{(X_{n, m})})}\\
X_{n-1, m-1} \ar[r]^-\phi & \Sigma(X_{n, m}) \ar@{=}[rr] && \Sigma(X_{n, m}).
}\]
Here, $\phi\colon X_{n-1, m-1}\rightarrow \Sigma(X_{n, m})$ is an isomorphism of complexes whose $j$-th component $\phi^j$ equals $(-1)^j {\rm Id}_A$. The left square commutes by the naturality of $\gamma$. We observe that $\phi\circ \Delta_{n-1,m-1}\circ \phi^{-1}=\Sigma(\Delta_{n, m})$. Since we have $f_{n-1, m-1}=b_{n, m}+f_{n, m}$, it follows that the outer diagram commutes. Then we are done with the claim.

We summarise with the following composition of natural isomorphisms between triangle functors
$$(F, \omega)\stackrel{\delta}\longrightarrow (F', \omega')\stackrel{\delta'}\longrightarrow ({\rm Id}_\mathcal{T}, \omega'') \stackrel{\gamma}\longrightarrow ({\rm Id}_\mathcal{T}, {\rm Id}_\Sigma).$$
 This composition proves that $\mathcal{A}$ is $\mathbf{K}$-standard by Lemma \ref{lem:K-stan}.

The triangle center $Z_\vartriangle(\mathcal{T})$ is computed in \cite[Section 5]{KY}. It turns out that the homomorphism  (\ref{equ:CK}) for $\mathcal{A}$ is not an isomorphism; also see \cite[Lemma 3.2]{Ku}. By Lemma \ref{lem:stan-center},  $\mathcal{A}=A\mbox{-proj}$ is not strongly $\mathbf{K}$-standard. The second statement follows by Theorem \ref{thm:1}. \hfill $\square$

\subsection{Another example} Let $d\geq 2$.  Let $A$ be the algebra given by the following cyclic quiver
\[\xymatrix{
1\ar[r]^{\alpha_1} & 2\ar[r]^-{\alpha_2} & \cdots \ar[r] & d \ar@/^0.7pc/[lll]^-{\alpha_d}  }\]
with radical square zero. Then $A$ is a Nakayama Frobenius algebra. Denote by $e_s$ the primitive idempotent corresponding to the vertex $s$. Then the corresponding indecomposable projective $A$-module is $P_s=Ae_s=ke_s\oplus k\alpha_{s}$. Here, all the lower indices are viewed as elements in $\mathbb{Z}/d\mathbb{Z}$. For example, we identify $P_1$ with $P_{d+1}$.

In what follows, by the unnamed arrow $P_s\rightarrow P_{s-1}$, we mean the left $A$-module homomorphism induced by the multiplication of $\alpha_{s-1}$ from the right. More precisely, it sends $e_s$ to $\alpha_{s-1}$,  and $\alpha_s$ to $0$.

To describe the well-known structure of $\mathbf{K}^b(A\mbox{-proj})$, we introduce some notation; compare \cite[Section 5]{Bon}. For $s\in \mathbb{Z}/d\mathbb{Z}$ and $n\leq m$, we denote by $X_{s, n, m}$ the following complex of $A$-modules
$$\cdots \rightarrow 0 \rightarrow P_s \rightarrow P_{s-1}\rightarrow \cdots \rightarrow P_{s+n-m}\rightarrow 0\rightarrow \cdots, $$
where the nonzero components start at degree $n$ and end at degree $m$. In particular, we have $X_{s, n, n}=\Sigma^{-n}(P_s)$.

We denote by $i_{s, n, m}\colon X_{s, n, m}\rightarrow X_{s+1, n-1, m}$ the inclusion chain map,  and by $\pi_{s, n, m}\colon X_{s, n, m}\rightarrow X_{s, n, m-1}$ the projection if further $n<m$. For $n\leq m\leq l$, we have the following map $c_{s, n, m, l}\colon X_{s, n, m}\rightarrow X_{s+n-m-1,m, l}$
\[\xymatrix@C=15pt{
0\ar[r] & P_s \ar[r] & P_{s-1} \ar[r] & \cdots \ar[r] & P_{s+n-m} \ar[d] \ar[r] & 0 \\
 &  & & 0 \ar[r] & P_{s+n-m-1} \ar[r] & P_{s+n-m-2} \ar[r] & \cdots \ar[r] & P_{s+n-l-1} \ar[r] & 0.
}\]
Here, the only nonzero vertical map is induced by the multiplication of $\alpha_{s+n-m-1}$ from the right.

\begin{lem}\label{lem:fact2}
The following facts hold.
\begin{enumerate}
\item $\{X_{s, n, m}\; |\; s\in \mathbb{Z}/{d\mathbb{Z}}, \; n\leq m\}$ is a complete set of representatives of the isomorphism classes of indecomposable objects in $\mathbf{K}^b(A\mbox{-{\rm proj}})$.
\item For $n\leq m$ and $1\leq r$,  we have ${\rm Hom}(X_{s, n, m}, X_{s+r, n-r, m})=k(i_{s+r-1, n-r+1, m}\circ \cdots \circ i_{s+1, n-1, m} \circ i_{s, n, m})$.
    \item For $n<m$ and $1\leq r\leq m-n$, we have ${\rm Hom}(X_{s, n, m}, X_{s, n, m-r})=k(\pi_{s, n, m-r+1}\circ \cdots \circ \pi_{s, n, m-1}\circ  \pi_{s, n, m})$.
     \item For $n\leq m\leq l$, we have  ${\rm Hom}(X_{s, n, m}, X_{s+n-m-1, m, l})=kc_{s, n, m, l}$.
    \item For $n\leq m$, we have ${\rm Hom}(X_{s, n, m}, X_{s, n, m})=k{\rm Id}_{X_{s, n, m}}$.
    \item The morphisms $\{i_{s, n, m}, \pi_{s, n, m}, c_{s, n,m, l}\; |\; s\in \mathbb{Z}/{d\mathbb{Z}}, \; n\leq m\leq l\}$ span the $k$-linear category $\mathbf{K}^b(A\mbox{-{\rm proj}})$. \hfill $\square$
\end{enumerate}
\end{lem}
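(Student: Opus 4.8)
The plan is to reduce the structure of $\mathbf{K}^b(A\mbox{-proj})$ to a classification problem for representations of a quiver of type $\mathbb{A}_\infty^\infty$, and then to read off the Hom spaces by a direct computation. First I would use that $A\mbox{-proj}$ is Krull--Schmidt and $\mathrm{rad}^2\,A = 0$: every object of $\mathbf{K}^b(A\mbox{-proj})$ is then isomorphic to a \emph{minimal} complex, i.e. one whose differentials have all entries in $\mathrm{rad}\,A$, and two minimal complexes are isomorphic in $\mathbf{K}^b(A\mbox{-proj})$ if and only if they are isomorphic as complexes. This is the standard argument of splitting off contractible summands, and it turns the whole statement into a concrete problem about minimal complexes.

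Next I would record that $\mathrm{Hom}_A(P_s, P_t)$ is one-dimensional, spanned by the identity, when $t = s$; one-dimensional, spanned by the $\alpha$-multiplication map of the statement, when $t = s-1$; and zero otherwise. Since $d \geq 2$, in particular $\mathrm{End}_A(P_s) = k$. Hence a minimal complex $\bigoplus_s P_s \otimes V_s^{\bullet}$ is completely encoded by the linear maps $V_s^i \to V_{s-1}^{i+1}$ extracted from its differential, and the relation $d^{i+1}d^i = 0$ holds automatically, because a composite of two $\alpha$-multiplications $P_s \to P_{s-1} \to P_{s-2}$ lies in $\mathrm{rad}^2\,A = 0$. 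Thus minimal complexes correspond bijectively to finitely supported representations of the quiver $\widetilde Q$ with vertex set $(\mathbb{Z}/d\mathbb{Z}) \times \mathbb{Z}$ and one arrow $(s,i) \to (s-1, i+1)$ for each vertex; since the $\mathbb{Z}$-coordinate strictly increases along arrows, $\widetilde Q$ is a disjoint union of $d$ linear quivers of type $\mathbb{A}_\infty^\infty$, indexed by the residue $s + i$ modulo $d$. A finitely supported representation of $\mathbb{A}_\infty^\infty$ decomposes uniquely into interval representations, and the interval representations are precisely the $\widetilde Q$-representations underlying the complexes $X_{s,n,m}$; conversely each $X_{s,n,m}$ is minimal with an interval as underlying representation, hence indecomposable, and distinct triples give non-isomorphic complexes. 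This gives (1).

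For (2)--(5) I would compute $\mathrm{Hom}_{\mathbf{K}^b(A\mbox{-proj})}(X_{s,n,m}, X_{t,p,q})$ directly: since both complexes are minimal, this is the space of degree-preserving chain maps modulo null-homotopic ones. A chain map has, in each overlapping degree, a component which is either an identity on some $P_a$ or an $\alpha$-multiplication $P_a \to P_{a-1}$; the commutation with the differentials then forces the relative position of the two complexes to be one of those appearing in (2), (3), (4), and in each such case pins the chain map down to a one-dimensional space once the null-homotopic maps are quotiented out. For (5), any null-homotopic endomorphism of $X_{s,n,m}$ is built from composites of two $\alpha$-multiplications and hence vanishes, while a non-null-homotopic endomorphism must be a scalar because $\mathrm{End}_A(P_s) = k$ (this is exactly where $d\ge 2$ is used, and where the behaviour differs from the dual numbers). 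Finally, (6) follows formally: every object is a direct sum of the $X_{s,n,m}$ by (1), and by (2)--(5) every Hom space between two such is spanned by identities together with composites of the $i_{s,n,m}$, $\pi_{s,n,m}$ and $c_{s,n,m,l}$, which is precisely the assertion that these morphisms span $\mathbf{K}^b(A\mbox{-proj})$ in the sense of Section~2.

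The main obstacle is (2)--(5), namely the bookkeeping in the Hom computation: one has to enumerate carefully the admissible relative positions of two interval complexes, check the commutation conditions at the boundary degrees (the cases $n = m$ and $m = l$ in (4) being slightly special), and confirm that no further null-homotopic or non-null-homotopic morphisms survive. Everything else --- the reduction to minimal complexes, the passage to $\widetilde Q$, and the assembling of (6) --- is formal. Alternatively one may invoke the general classification of indecomposable objects in $\mathbf{K}^b(\mathrm{proj})$ of a gentle (or special biserial) algebra; compare \cite[Section~5]{Bon}.
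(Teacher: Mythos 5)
The paper itself offers no proof of this lemma: it is stated as a list of ``well-known facts'' with a reference to \cite[Section 5]{Bon}, so there is nothing to compare your argument against line by line. Your proposal supplies the standard proof, and it is sound. The reduction to minimal complexes (differentials with entries in $\mathrm{rad}\,A$), the observation that $\mathrm{Hom}_A(P_s,P_t)$ is $k$ for $t=s$, $k\cdot(\alpha\text{-multiplication})$ for $t=s-1$ and $0$ otherwise, the automatic vanishing of $d^2$ from $\mathrm{rad}^2A=0$, and the resulting identification of minimal complexes with finitely supported representations of $d$ copies of the linearly oriented $\mathbb{A}_\infty^\infty$ quiver (the components being the fibres of $s+i \bmod d$) are all correct, and the interval decomposition gives (1). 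Your remark that $d\ge 2$ is exactly what kills the almost vanishing endomorphism of the dual-numbers case in (5) is the right point of contrast with Lemma \ref{lem:fact1}.

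One imprecision worth fixing before you rely on this for part (6): it is not true that commutation with the differentials forces the relative position of two interval complexes to be one of those appearing in (2)--(4). There are further relative positions with nonzero Hom, e.g.\ $\mathrm{Hom}(X_{s,n,m}, X_{s+n-j-1,\,j,\,l})$ for $n\le j\le m\le l$ is one-dimensional, spanned by the composite of projections $X_{s,n,m}\to X_{s,n,j}$ followed by $c_{s,n,j,l}$ (and dually for compositions with the inclusions $i$). Parts (2)--(5) only compute the Hom spaces in the listed positions, so they are unaffected; but part (6) requires the \emph{exhaustive} enumeration of all pairs with nonzero Hom and a check that each such space is spanned by a composite of the generators $i_{s,n,m}$, $\pi_{s,n,m}$, $c_{s,n,m,l}$ and identities. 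This is still routine with your representation-theoretic description (in each overlapping degree the component is either a scalar or a scalar times an $\alpha$-multiplication, and the homotopies are easy to list), but it is a genuine case analysis rather than something that ``follows formally'' from (2)--(5) as stated. With that caveat addressed, your outline is a complete and correct proof of a statement the paper delegates to the literature.
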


For $n<m$ and $s\in \mathbb{Z}/d\mathbb{Z}$, we observe the following exact triangle
\begin{align}\label{equ:tri3}
X_{s+n-m, m, m} \stackrel{t'}\longrightarrow X_{s, n, m}\xrightarrow{\pi_{s, n, m}} X_{s, n, m-1} \xrightarrow{c_{s, n, m-1, m-1}} X_{s+n-m, m-1, m-1},
\end{align}
where $t'=i_{s-1, n+1, m}\circ \cdots \circ i_{s+n-m+1, m-1, m}\circ i_{s+n-m, m, m}$.

\begin{lem}\label{lem:lambda2}
Let $(F, \omega)\colon \mathbf{K}^b(A\mbox{-{\rm proj}})\rightarrow \mathbf{K}^b(A\mbox{-{\rm proj}})$ be a pseudo-identity. Assume that $F(i_{s, n,m})=\lambda_{s, n, m}i_{s, n, m}$ and $F(\pi_{s, n, m})=\mu_{s, n, m}\pi_{s, n, m}$ for each $s\in \mathbb{Z}/{d\mathbb{Z}}$ and  $n\leq m$, where $\lambda_{s, n, m}$ and $\mu_{s, n, m}$ are nonzero scalars. Then the following statements hold.
\begin{enumerate}
\item For $n\leq m\leq l$, we have
\begin{align*}
F(c_{s, n,m, l})=(&\lambda_{s-1, n+1, m} \cdots \lambda_{s+n-m+1, m-1, m}\lambda_{s+n-m, m, m} \cdot \\
 &\mu_{s+n-m-1, m, m+1}\cdots \mu_{s+n-m-1, m, l-1}\mu_{s+n-m-1, m ,l})^{-1} c_{s, n, m, l}.\end{align*}
\item Assume for $s\in \mathbb{Z}/{d\mathbb{Z}}$ and $m\in \mathbb{Z}$ that $\omega_{(X_{s, m, m})}=a_{s, m} {\rm Id}_{\Sigma(X_{s, m, m})}$ with $a_{s, m}$ nonzero scalar. Then we have $a_{s, m}=a_{s', m}$ for any $s, s'$. This common value is denoted by $a_m$.
\item    For $n<m$,  we have
    \begin{align}\label{equ:lambda2}
    &\lambda_{s-1, n+1, m}\cdots \lambda_{s+n-m+1, m-1, m}\lambda_{s+n-m, m, m}\mu_{s, n, m}a_{m}\\
     =&\lambda_{s-1, n+1, m-1}\cdots \lambda_{s+n-m+2, m-2, m-1}\lambda_{s+n-m+1, m-1, m-1}.\nonumber
    \end{align}
\end{enumerate}
\end{lem}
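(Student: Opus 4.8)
The overall plan is to adapt the proof of Lemma~\ref{lem:lambda} for the dual numbers; here the computations are slightly lighter, because by Lemma~\ref{lem:fact2}(5) every indecomposable object of $\mathbf{K}^b(A\mbox{-proj})$ has endomorphism ring $k$, so no almost vanishing endomorphisms intervene. For part (1), I would use the factorization
\begin{align*}
\Sigma^{-m}(\beta)=\ &(\pi_{s+n-m-1,m,m+1}\circ\cdots\circ\pi_{s+n-m-1,m,l})\\
&\circ\, c_{s,n,m,l}\circ(i_{s-1,n+1,m}\circ\cdots\circ i_{s+n-m,m,m})
\end{align*}
in $\mathbf{K}^b(A\mbox{-proj})$, where $\beta\colon P_{s+n-m}\to P_{s+n-m-1}$ is right multiplication by $\alpha_{s+n-m-1}$, the last composite being the inclusion of the stalk $\Sigma^{-m}(P_{s+n-m})$ as the degree-$m$ component of $X_{s,n,m}$, and the first composite the projection of $X_{s+n-m-1,m,l}$ onto its degree-$m$ component. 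Since $\Sigma^{-m}(\beta)$ is a morphism in the full subcategory $\Sigma^{-m}(A\mbox{-proj})$ of stalk complexes concentrated in degree $m$, on which $F$ restricts to the identity, $F$ fixes it; on the other hand $F(c_{s,n,m,l})=\nu\,c_{s,n,m,l}$ for a scalar $\nu$ by Lemma~\ref{lem:fact2}(4). As $\Sigma^{-m}(\beta)\neq 0$, applying $F$ to the displayed identity and using the hypotheses on $F(i_\bullet)$ and $F(\pi_\bullet)$ pins $\nu$ down as the inverse of the stated product of $\lambda$'s and $\mu$'s; when $n=m$ the inclusion factor is an identity and no $\lambda$'s occur, and when $m=l$ the projection factor is an identity and no $\mu$'s occur.

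For part (2), I would use the naturality of the connecting isomorphism $\omega\colon F\Sigma\to\Sigma F$ at the nonzero morphism $c_{s,m,m,m}\colon X_{s,m,m}\to X_{s-1,m,m}$ furnished by Lemma~\ref{lem:fact2}(4) (it is the degree-$m$ shift of right multiplication by $\alpha_{s-1}$; here $d\geq 2$ ensures that $X_{s,m,m}$ and $X_{s-1,m,m}$ are distinct indecomposables). Both $c_{s,m,m,m}$ and its $\Sigma$-translate are morphisms between stalk complexes, hence fixed by $F$, so that the naturality square collapses, in view of ${\rm End}(X_{s,m,m})=k={\rm End}(X_{s-1,m,m})$, to the scalar identity $a_{s-1,m}=a_{s,m}$. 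Running $s$ over $\mathbb{Z}/d\mathbb{Z}$ then shows that $a_{s,m}$ is independent of $s$.

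For part (3), I would apply the triangle functor $(F,\omega)$ to the exact triangle (\ref{equ:tri3}). Its three morphisms get rescaled: $t'$ by the product of the relevant $\lambda$'s (coming from $F$ of the $i$'s), $\pi_{s,n,m}$ by $\mu_{s,n,m}$, and the connecting morphism $c_{s,n,m-1,m-1}$ by $\omega_{(X_{s+n-m,m,m})}\circ F(c_{s,n,m-1,m-1})$, which by parts (1) and (2) equals $a_m$ times the scalar furnished by part (1) for $F(c_{s,n,m-1,m-1})$. The resulting triangle is again exact, and since all three of its terms have endomorphism ring $k$ while the first two morphisms are nonzero, I may compose with scalar isomorphisms of the terms to restore the first two morphisms to exactly $t'$ and $\pi_{s,n,m}$, at the cost of multiplying the connecting morphism by the product of all the scalars accumulated so far. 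Comparing the outcome with (\ref{equ:tri3}) itself and invoking Proposition~\ref{prop:almvan} (in the case ${\rm End}(X_{s,n,m-1})=k$), this total scalar must be $1$, and unwinding the two sides of the resulting equation gives exactly the identity (\ref{equ:lambda2}).

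I expect the only real obstacle to be the index bookkeeping: one must check that the chains of $\lambda$'s produced by $F(t')$, by part~(1) applied to $F(c_{s,n,m-1,m-1})$, and by the final normalization assemble precisely into the two sides of (\ref{equ:lambda2}). Conceptually the whole argument is a routine transcription of the dual numbers case, made lighter by the absence of nontrivial local endomorphism algebras.
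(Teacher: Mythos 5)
Your proposal is correct and follows essentially the same route as the paper's (very terse) proof: part (1) via the factorization of the degree-$m$ stalk morphism $\Sigma^{-m}(\phi_{s+n-m})$ through $c_{s,n,m,l}$, part (2) via naturality of $\omega$ at $c_{s,m,m,m}=\Sigma^{-m}(\phi_s)$, and part (3) by applying $(F,\omega)$ to the triangle (\ref{equ:tri3}) and invoking Proposition \ref{prop:almvan}. The index bookkeeping you flag does work out exactly as you describe, so your write-up is a faithful filling-in of the details the paper omits.
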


\begin{proof}
The proof is similar to the one of Lemma \ref{lem:lambda}. Denote by $\phi_s\colon P_s\rightarrow P_{s-1}$  the above unnamed arrow. Recall that $F\Sigma^m(\phi_s)=\Sigma^m(\phi_s)$ for each $m\in \mathbb{Z}$ and $s\in \mathbb{Z}/{d\mathbb{Z}}$.  Then we obtain (1). For (2), we  apply the naturality of $\omega$ to the morphism $\Sigma^{-m}(\phi_s)\colon X_{s, m, m}\rightarrow X_{s-1, m, m}$ and obtain $a_{s, m}=a_{s-1, m}$. For (3), it suffices to apply $(F, \omega)$ to the triangle (\ref{equ:tri3}). We omit the details.
\end{proof}

The following result is analogous to Theorem \ref{thm:exam1}.

\begin{prop}
Let $A$ be the above algebra given by a cyclic quiver with radical square zero.  Then $A\mbox{-{\rm proj}}$ is strongly $\mathbf{K}$-standard, and thus $A\mbox{-{\rm mod}}$ is strongly $\mathbf{D}$-standard.
\end{prop}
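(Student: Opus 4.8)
The plan is to follow the proof of Theorem~\ref{thm:exam1} almost verbatim, exploiting one crucial simplification: by Lemma~\ref{lem:fact2}(5) every indecomposable object of $\mathcal{T}:=\mathbf{K}^b(A\mbox{-{\rm proj}})$ has endomorphism ring $k$, so there are \emph{no} almost vanishing self-endomorphisms, and the final $\gamma$-correction step of the dual-numbers argument (which existed only to absorb the $\Delta$-terms) becomes unnecessary. Throughout write $\mathcal{A}=A\mbox{-{\rm proj}}$.

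First I would show $\mathcal{A}$ is $\mathbf{K}$-standard. Let $(F,\omega)$ be a pseudo-identity on $\mathcal{T}$. Since $F$ fixes all stalk complexes and is $k$-linear, Lemma~\ref{lem:fact2}(2)(3) forces $F(i_{s,n,m})=\lambda_{s,n,m}i_{s,n,m}$ and $F(\pi_{s,n,m})=\mu_{s,n,m}\pi_{s,n,m}$ for nonzero scalars, and $\omega_{(X_{s,m,m})}$ is a scalar multiple of the identity (as $\mathrm{End}(\Sigma(X_{s,m,m}))=\mathrm{End}(X_{s,m-1,m-1})=k$); by Lemma~\ref{lem:lambda2}(2) this scalar $a_m$ is independent of $s$. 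Choose nonzero scalars $c_m$ with $c_0=1$ and $a_m=c_{m-1}c_m^{-1}$, and use them — exactly as in the proof of Theorem~\ref{thm:exam1}, with the products $\lambda_{s-1,n+1,m}\cdots\lambda_{s+n-m,m,m}$ (cf.\ Lemma~\ref{lem:lambda2}(1)) playing the role of the products there — to build adjusting isomorphisms $\delta_X\colon F(X)\to F'(X):=X$, yielding a new pseudo-identity $(F',\omega')$ with an isomorphism $\delta\colon (F,\omega)\to (F',\omega')$ of triangle functors. The identities~(\ref{equ:lambda2}) together with $a_m=c_{m-1}c_m^{-1}$ then give $F'(i_{s,n,m})=i_{s,n,m}$ and $F'(\pi_{s,n,m})=\pi_{s,n,m}$, and Lemma~\ref{lem:lambda2}(1) gives $F'(c_{s,n,m,l})=c_{s,n,m,l}$. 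Since these morphisms span $\mathcal{T}$ by Lemma~\ref{lem:fact2}(6), Lemma~\ref{lem:iso-gen} produces a natural isomorphism $\delta'\colon F'\to{\rm Id}_\mathcal{T}$ with $\delta'_{(X_{s,n,m})}={\rm Id}$, and Lemma~\ref{lem:iso-tri} upgrades it to an isomorphism of triangle functors $(F',\omega')\to({\rm Id}_\mathcal{T},\omega'')$.

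It then remains to see that $\omega''$ is trivial. Write $\omega''_{(X_{s,n,m})}=a_{s,n,m}\,{\rm Id}$, a scalar endomorphism since $\mathrm{End}(\Sigma(X_{s,n,m}))=k$. Applying the triangle functor $({\rm Id}_\mathcal{T},\omega'')$ to the exact triangle~(\ref{equ:tri3}), its connecting morphism gets multiplied by the scalar attached to $X_{s+n-m,m,m}$; since the third term $X_{s,n,m-1}$ has endomorphism ring $k$ and the relevant morphisms are nonzero, Proposition~\ref{prop:almvan} forces that scalar to be $1$, which handles all stalk complexes. Applying $({\rm Id}_\mathcal{T},\omega'')$ to a rotation of~(\ref{equ:tri3}) and using Proposition~\ref{prop:almvan} once more gives $a_{s,n,m}=1$ for $n<m$ as well. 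Hence $\omega''={\rm Id}_\Sigma$, so $(F,\omega)\cong({\rm Id}_\mathcal{T},{\rm Id}_\Sigma)$ and $\mathcal{A}$ is $\mathbf{K}$-standard by Lemma~\ref{lem:K-stan}.

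Finally, for strong $\mathbf{K}$-standardness it suffices by Lemma~\ref{lem:stan-center} to show the (always surjective) homomorphism~(\ref{equ:CK}) is injective, i.e.\ that any $\lambda\in Z_\vartriangle(\mathcal{T})$ with $\lambda|_\mathcal{A}={\rm Id}$ equals ${\rm Id}$. Again $\lambda_{(X_{s,n,m})}=\ell_{s,n,m}\,{\rm Id}$ for a scalar; since $\lambda$ commutes with $\Sigma$ and is trivial on $\mathcal{A}$ we get $\ell_{s,n,n}=1$ for all $s,n$, and naturality of $\lambda$ along the projections $\pi_{s,n,m}$ gives $\ell_{s,n,m}=\ell_{s,n,m-1}$, so $\ell_{s,n,m}=1$ for all $s$ and $n\le m$; then $\lambda={\rm Id}$ by Lemma~\ref{lem:twofun}. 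Thus $A\mbox{-{\rm proj}}$ is strongly $\mathbf{K}$-standard, and $A\mbox{-{\rm mod}}$ is strongly $\mathbf{D}$-standard by Theorem~\ref{thm:1}. The only genuine work is the index bookkeeping in the adjustment step; conceptually this proof is strictly easier than that of Theorem~\ref{thm:exam1}, since there are no central almost vanishing endomorphisms to correct for.
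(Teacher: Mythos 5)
Your proposal is correct, and its main body coincides with the paper's argument: the same choice of scalars $c_m$ with $a_m=c_{m-1}c_m^{-1}$, the same adjusting isomorphisms built from the products $\lambda_{s-1,n+1,m}\cdots\lambda_{s+n-m,m,m}$, the same verification that the adjusted pseudo-identity $F'$ fixes the spanning morphisms of Lemma \ref{lem:fact2}(6), and the same appeal to Lemma \ref{lem:iso-gen} to reach $({\rm Id}_\mathcal{T},\omega'')$. The only genuine divergence is the endgame: the paper observes that $\mathcal{T}$ is a non-degenerate block with ${\rm End}(X)=k$ for every indecomposable $X$ and then quotes Proposition \ref{prop:ZZ} to conclude both $\omega''={\rm Id}_\Sigma$ and $Z_\vartriangle(\mathcal{T})=k$ (the latter giving strong $\mathbf{K}$-standardness via Lemma \ref{lem:stan-center}), whereas you establish the triviality of $\omega''$ directly by applying Proposition \ref{prop:almvan} to the triangle (\ref{equ:tri3}) and to its rotation, and you prove injectivity of the restriction map (\ref{equ:CK}) by hand using $\lambda\Sigma=\Sigma\lambda$ on stalk complexes and naturality along the $\pi_{s,n,m}$. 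Both routes are valid; yours is more self-contained in that it avoids the connectedness argument for non-degenerate blocks imported from \cite{CR}, at the cost of a little extra explicit checking, while the paper's version makes the structural reason ($Z_\vartriangle(\mathcal{T})=k$) visible at a glance.
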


\begin{proof}
The proof is similar to the one of Theorem \ref{thm:exam1}; indeed, it is much easier. We only give a sketch. Set $\mathcal{A}=A\mbox{-{\rm proj}}$ and $\mathcal{T}=\mathbf{K}^b(A\mbox{-{\rm proj}})$.

Let $(F, \omega)$ be a pseudo-identity on $\mathcal{T}$. We assume that $F(i_{s, n,m})=\lambda_{s, n, m}i_{s, n, m}$ and $F(\pi_{s, n, m})=\mu_{s, n, m}\pi_{s, n, m}$ for some nonzero scalars $\lambda_{s, n, m}$ and $\mu_{s, n, m}$. Assume that $\omega_{(X_{s, m, m})}=a_{m} {\rm Id}_{\Sigma(X_{s, m, m})}$ for nonzero scalars $a_{m}$; see Lemma \ref{lem:lambda2}(2). We choose nonzero scalars $c_{m}$ such that $c_{0}=1$ and $a_{m}=c_{m-1}(c_{m})^{-1}$ for each $m\in \mathbb{Z}$. For each complex $X$, we fix an isomorphism
$$\delta_X\colon F(X)=X\longrightarrow X=F'(X)$$
 such that $\delta_{\Sigma^m(X)}=c_{-m}{\rm Id}_{\Sigma^m(X)}$ for any $X\in \mathcal{A}$ and $m\in \mathbb{Z}$,  and that
$$\delta_{(X_{s, n, m})}=c_{m} (\lambda_{s-1, n+1, m} \cdots \lambda_{s+n-m+1, m-1, m}\lambda_{s+n-m, m, m})^{-1}{\rm Id}_{X_{s, n, m}}$$
for $n<m$. As required, we have  $\delta_{(X_{s, m, m})}=c_{m}{\rm Id}_{X_{s, m, m}}$.

We use these isomorphisms $\delta_X$'s as the adjusting isomorphisms to obtain a new triangle functor $(F', \omega')$, which is still a pseudo-identity. It follows that $F'(i_{s, n,m})=i_{s, n, m}$ and $F'(\pi_{s, n, m})=\pi_{s, n, m}$. Here, the latter identity relies on (\ref{equ:lambda2}). By Lemma \ref{lem:lambda2}(1), we have $F'(c_{s, n, m, l})=c_{s, n, m, l}$. It follows from Lemmas \ref{lem:fact2}(6) and \ref{lem:iso-gen} that  there is a natural isomorphism $\delta'\colon (F', \omega')\rightarrow ({\rm Id}_\mathcal{T}, \omega'')$ of triangle functors.

We observe that $\mathcal{T}$ is a non-degenerate block. We apply Lemma \ref{lem:fact2}(5) and Proposition \ref{prop:ZZ} to infer that $Z_\vartriangle(\mathcal{T})=k$ and that $\omega''={\rm Id}_\Sigma$. By Lemmas \ref{lem:K-stan} and \ref{lem:stan-center},  we infer that $\mathcal{A}=A\mbox{-proj}$ is strongly $\mathbf{K}$-standard. The second statement follows from Theorem \ref{thm:1}.
\end{proof}

\vskip 5pt

\noindent {\bf Acknowledgements}\quad We thank Wei Hu,  Zengqiang Lin and David Ploog for their helpful comments and encouragement. The work is supported by National Natural Science Foundation of China (No.s 11431010, 11522113 and 11571329).

\bibliography{}

\vskip 10pt

 {\footnotesize \noindent Xiao-Wu Chen, Yu Ye\\
  School of Mathematical Sciences, University of Science and Technology of
China, Hefei 230026, Anhui, PR China \\
Wu Wen-Tsun Key Laboratory of Mathematics, USTC, Chinese Academy of Sciences, Hefei 230026, Anhui, PR China.\\
URL: http://home.ustc.edu.cn/$^\sim$xwchen, http://staff.ustc.edu.cn/$^\sim$yeyu.}

\end{document}